\documentclass[a4paper,twoside]{article}
\usepackage{fullpage}
\usepackage{epsfig}
\usepackage{amsmath}
\usepackage{amsthm}
\usepackage{amssymb}
\usepackage{pstricks, textpos}
\usepackage{graphicx}
\usepackage{mathrsfs}
\usepackage{multicol}
\usepackage{bbm}
\usepackage{fancyhdr}
\usepackage{enumitem}
\usepackage{esint}

\topmargin -0.5cm
\oddsidemargin 0in 

\evensidemargin 0in
\textwidth 6.3 truein
\textheight 9.1 truein

\def\+{\oplus}

\newcommand{\R}{{\mathbb R}}

\newcommand{\N}{{\mathbb N}}

\newcommand{\cF}{{\mathcal F}}

\newcommand{\cE}{{\mathcal E}}

\newcommand{\cA}{{\mathcal A}}

\newcommand{\cC}{C}
\newcommand{\cH}{{\mathcal H}}

\newcommand{\dx}{\;\text{d}}
\newcommand{\ph}{\varphi}

\newcommand{\eps}{\varepsilon}
\newcommand{\one}{\mathbbm{1}}

\newcommand{\leqs}{\leqslant}
\newcommand{\geqs}{\geqslant}
\newcommand{\Omegai}{\Omega_\mathrm{int}}
\newcommand{\Omegae}{\Omega_\mathrm{ext}}
\newcommand{\tildeOmegai}{\widetilde \Omega_\text{int}}
\newcommand{\ui}{u_\mathrm{int}}
\newcommand{\ue}{u_\mathrm{ext}}
\newcommand{\vi}{v_\mathrm{int}}
\newcommand{\ve}{v_\mathrm{ext}}

\newcommand{\rhu}{\rightharpoonup}

\newcommand{\p}{p}

\newcommand{\limi}{\underline{\lim}~}
\newcommand{\lims}{\overline{\lim}~}

\newcommand{\ds}{\displaystyle}
\def\squareforqed{\hbox{\rlap{$\sqcap$}$\sqcup$}}
\def\qed{\ifmmode\else\unskip~ \fi\squareforqed}
\def\smartqed{\def\qed{\ifmmode\squareforqed\else{\unskip\nobreak\hfil
\penalty50\hskip1em\null\nobreak\hfil\squareforqed
\parfillskip=0pt\finalhyphendemerits=0\endgraf}\fi}}

\newcommand{\rat}{r}

\newtheoremstyle{thm}
{\topsep}
{\topsep}
{\itshape}
{}
{\sffamily \bfseries}
{.}
{ }
{\textsf{\thmname{#1}}\textsf{\textbf{\thmnumber{ \textup{#2}}}}\thmnote{~: \textit{#3}}}

\newtheoremstyle{def}
{\topsep}
{\topsep}
{\normalfont}
{}
{\sffamily \bfseries}
{.}
{ }
{\thmname{#1}\textbf{\textsf{\thmnumber{ \textup{#2}}}}\thmnote{~: \textit{#3}}}

\theoremstyle{thm}
\newtheorem{remark}{Remark}
\newtheorem{lemma}{Lemma}
\newtheorem{theorem}{Theorem}

\newtheorem{proposition}{Proposition}

\theoremstyle{def}
\newtheorem{definition}{Definition}
\newtheorem{assumption}{Assumption}

\title{\textsf{A transmission problem across a fractal self-similar interface}}
\author{
\textsf{Yves Achdou} \thanks {Universit{\'e} Paris Diderot, UFR Math{\'e}matiques, 
175 rue du Chevaleret, 75013 Paris, France, 
UMR 7598, Laboratoire Jacques-Louis Lions, F-75005, Paris, France, achdou@math.jussieu.fr},
\textsf{Thibaut Deheuvels}\thanks{{\'E}cole normale sup{\'e}rieure de Rennes, avenue Robert Schuman, 35170 Bruz, France, thibaut.deheuvels@ens-rennes.fr}
}
\date{}

\pagestyle{fancy}
\fancyfoot{}

\fancyhead[CE]{\small \textit{\leftmark}}
\fancyhead[LE,RO]{\oldstylenums{\thepage}}
\fancyhead[CO]{\small \textit{\rightmark}} 
\fancyhead[RE]{}
\fancyhead[LO]{} 
 
\addtolength{\topmargin}{-7mm}



\makeatletter
\renewcommand{\section}{\@startsection {section}{1}{\z@}%
             {-3.5ex \@plus -1ex \@minus -.2ex}%
             {2.3ex \@plus.2ex}%
             {\normalfont\Large \sffamily \bfseries}}
\makeatother

\makeatletter
\renewcommand%
{\subsection}{\@startsection{subsection}{2}{0mm}
{-3.5ex \@plus -1ex \@minus -.2ex}%
             {2.3ex \@plus.2ex}%
{\normalfont\large \bfseries \sffamily}}%
\makeatother 

\makeatletter
\renewcommand%
{\subsubsection}{\@startsection{subsubsection}{3}{0mm}
 		{-3.5ex \@plus -1ex \@minus -.2ex}%
        	{2.3ex \@plus.2ex}%
{\normalfont \sffamily \bfseries }}%
\makeatother 

\makeatletter
\renewcommand%
{\paragraph}{\@startsection{paragraph}{5}{0mm}
{-0.8\baselineskip}{-1em}
{\normalfont \sffamily \bfseries }}%
\makeatother

\begin{document}

\begin{sffamily}
\maketitle
\end{sffamily}

\renewcommand{\abstractname}{\sf{Abstract}}

\thispagestyle{empty}

\begin{abstract}
We consider a transmission problem in which the interior domain has infinitely ramified structures.
 Transmission between the interior and exterior domains occurs only at the fractal component of the interface between  the interior and exterior domains.
We also consider the sequence of the transmission problems in which the interior domain is obtained by stopping the self-similar construction after a finite number of steps; 
the transmission condition is then posed on a prefractal approximation of the fractal interface. We prove the convergence in the sense of Mosco of the energy forms associated with these problems to the energy form of the limit problem. In particular, this implies the convergence of the solutions of the approximated problems to 
the solution of the problem with fractal interface. The proof relies in particular on an extension property.
\\
Emphasis is put on the geometry of the ramified domain. The convergence result is obtained when the fractal interface has no self-contact, and in a particular geometry with self-contacts, for which an extension result is proved.
\end{abstract}

\section{Introduction}\label{Introduction}

Transmission problems naturally arise in various fields of physics and have been extensively studied. An introduction to this class of problems can be found in \cite{MR0400916} and several applications are detailed in \cite{MR969367}.
\\
Such problems were more recently studied in the case when the interface is irregular, Lipschitz continuous or even fractal. These problems find many applications, such as the study of rough electrodes in electrochemistry or diffusion across irregular membranes in physiological processes, etc. (see \cite{PhysRevLett.73.3314,PhysRevLett.84.5776}). Several transmission problems with fractal interfaces have been studied in the case of the Koch flake or the Sierpi\'nski gasket in 2D and 3D (see \textit{e.g.} \cite{MR1916407, MR2056421, MR2323380, MR2679587, MR2645992, MR2557700, MR3100114, MR3100076}).

This paper deals with transmission problems between two domains $\Omegai$ and $\Omegae$ where $\Omegai$ is a ramified bounded domain as defined in Section \ref{domains} (see Figure \ref{fig-domram}). The domain $\Omegai$ presents infinitely many ramifications, and its boundary contains a fractal self-similar set $\Gamma$ which plays the role of the interface. 
\\
The domain $\Omegai$ can be seen as a bidimensional idealization of the bronchial tree, for example. Since the exchanges between the lungs and the circulatory system take place only in the last generations of the bronchial tree (the smallest structures), it is reasonable to consider transmission problems with specific transmission conditions accross the fractal boundary $\Gamma$ of the tree. We will however limit ourselves to simple transmission conditions.
\\
The fractal boundary $\Gamma$ belongs to a family of self-similar sets introduced by Mandelbrot \textit{et al.} in \cite{MR1684366}. Elliptic boundary value problems in the domain $\Omegai$ have been studied in \cite{ach-tch-2007-1}, and traces and extension results for these domains have been proved in \cite{ADT,SEP}.
\\[1mm]
The considered problem can be formally stated as
\begin{equation}\label{pb-transmission} \tag{$P$}
\left\{ \begin{array}{ll}
	-\Delta u = f & \mathrm{in}~ \Omegai\cup\Omegae,\\[1mm]
	[u]=0 &\text{on}~ \Gamma,\\[1mm]
	[\partial_n u]=\alpha u &\text{on}~ \Gamma,\\[1mm]
	\partial_n \ui =\partial_n \ue = 0 & \mathrm{on}~ \partial \Omegai \setminus \Gamma,\\[1mm]
	\partial_n \ue = 0,~~ \ui = u_0 & \mathrm{on}~ \Gamma^0,\\[1mm]
	\partial_n \ue =  0 & \mathrm{on}~ \partial D,
\end{array} \right.
\end{equation}
where $\alpha$ is a positive real number, $D$ is a regular bounded open domain in the plane with $\overline{D} = \overline{\Omegai}\cup \overline{\Omegae}$ such that $\Gamma \Subset D$, and $\Gamma^0$ is a line segment included in the boundary of $\Omegai$. The sets $\Omegai$ and $\Omegae$ are disjoint subdomains of $D$, and $[u]$ (\textit{resp.} $[\partial_n u]$) denotes the jump of $u$ (\textit{resp.} of the ``normal derivative" $\partial_n u$ of $u$) accross the fractal set $\Gamma$. 
Since the interface  $\Gamma$ is fractal, the normal derivative on $\Gamma$ has to be understood in a suitable weak sense, which will be made precise later.\\
Problem \eqref{pb-transmission} is a model problem. Its study is the first step in the modelling of physical transmission problems in ramified structures.
\\[1mm]
The goal is to study approximations $(P_n)$ of problem \eqref{pb-transmission}, obtained by stopping the construction of the ramified domain $\Omegai$ at step $n$. The interfaces in the problems $(P_n)$ are called prefractal approximations of the fractal set $\Gamma$. They consist of disjoint finite unions of line segments, which makes problems $(P_n)$ much simpler than problem $(P)$. A natural question is to understand the asymptotic behavior of the problems $(P_n)$ as $n\to \infty$, and in particular to investigate the convergence of the solutions $u_n$ of the problems $(P_n)$ to the solution $u$ of problem $(P)$.
\begin{remark}\label{sec:introduction-1}
In contrast with the references \cite{MR1916407, MR2056421, MR2323380, MR2679587, MR2645992, MR2557700, MR3100114, MR3100076}, the boundary value problem does not only involve transmission conditions at the interface between $\Omegai$ and $\Omegae$:  there are also homogeneous Neumann conditions on the polygonal part of $\partial \Omegai$; as a consequence, the traces of
 $\ui$  and $\ue$ on this set do not match a priori.  Coping with these discontinuities will be a difficulty in studying the convergence of the solutions $u_n$ of the problems $(P_n)$ to the solution $u$ of problem $(P)$.\end{remark}
\begin{remark}
\label{sec:introduction}
We have chosen that the source term in ($P$)  appear both in the  Dirichlet boundary condition on $\Gamma_0$ for $\ui$ and in the Poisson equations in $\Omegai$ and $\Omegae$; 
this is of course completely arbitrary.
\end{remark}
A crucial step in the study of the asymptotic behavior of $(P_n)$ is the question of extending functions defined in the domain $\Omegai$. More precisely, it is of particular importance that $\Omegai$ should satisfy a $W^{1,p}$-extension property for some $p\in [1,\infty]$, \textit{i.e.} there should exist a bounded linear operator 
	$$\cE : W^{1,p}(\Omegai)\to W^{1,p}(\R^2)$$
such that ${\cE (u)}_{|\Omegai} = u$ for all $u\in W^{1,p}(\Omegai)$.
The  domains satisfying this property for all $p\in [1,\infty]$ will often be referred to  as {\it Sobolev extension domains}.
\\
It is well known that every Lipschitz domain in $\R^n$, that is every domain whose boundary is locally the graph of a Lipschitz function, is a Sobolev extension domain. Calder{\'o}n proved the $W^{1,\p}$-extension property for $\p \in (1,\infty)$ (see \cite{MR0143037}), and Stein extended the result to the cases $\p=1$ and $\p=\infty$ (see \cite{MR0290095}).\\
In \cite{MR631089}, Jones generalized this result to the class of $(\eps,\delta)$-domains, also referred to as Jones domain, or locally uniform domains (see \cite{MR565886}). In dimension two, the definition of $(\eps,\delta)$-domains is equivalent to that of quasi-disks, see \cite{MR817985}.
This extension result is almost optimal in the plane, in the sense that every plane finitely connected Sobolev extension domain is an $(\eps,\delta)$-domain, see \cite{MR631089,MR817985}. The case of an unbounded domain in $\R^2$ has been studied in \cite{MR530508}. Extension properties for domains which do not have the $(\eps,\delta)$-property have been studied \textit{e.g.} in \cite{MR1643072}, where the authors examine in particular the case of domains with cusps. 
\\
\indent
When the fractal boundary $\Gamma$ of the domains $\Omegai$ studied in this paper has no self-contact, $\Omegai$ can be proved to be an $(\eps,\delta)$-domain, and and is therefore a Sobolev extension domain. However, in the case when the boundary self-intersects, $\Omegai$ does not have the $W^{1,p}$-extension property for all $p\geqs 1$. In particular, the extension property does not hold for $p=2$, which is the relevant case here,  since the variational formulations  of (\ref{pb-transmission}) naturally involve the spaces  $H^1(\Omegai)$.
 In the particular geometry considered in Section \ref{r=rstar}, it will be proved that the transmission condition imposed on $\Gamma$ 
yields a better regularity of the trace on $\Gamma$, for functions belonging to the  function space arising in the variational formulation. It is then possible to deduce an extension result in this case (see Theorem \ref{ext-thm}).
\\

The main question investigated in this paper is the question of the convergence in the sense of Mosco of the energy forms associated with problem $(P_n)$ to the energy form of the problem $(P)$. 
The notion of Mosco-convergence, or $M$-convergence, was introduced in \cite{MR0298508}, see also \cite{MR1283033}. It is a stronger convergence in the space of forms than $\Gamma$-convergence. In particular, it also implies the convergence of minimizers of the energy forms to the minimizer of the limit form. The $M$-convergence of forms is equivalent to the convergence of the resolvent operators associated with the relaxed forms in the strong operator topology (see \cite{MR1283033}). 

The main results of this paper are Theorems \ref{M-cv} and \ref{ext-thm}. Theorem \ref{M-cv}  is about the convergence of the energy forms associated with $(P_n)$ in the sense of Mosco to the energy form associated with \eqref{pb-transmission}, in the case when the fractal interface has no self-contact. The proof uses the extension operator from $H^1(\Omegai)$ to $H^1(\R^2)$ as a main ingredient. 
The existence of a continuous extension operator from $H^1(\Omegai)$ to $H^1(\R^2)$, for a particular geometry where the fractal interface self-intersects,  is stated in Theorem \ref{ext-thm}. As a consequence, the proof of Theorem \ref{M-cv} can be reproduced in this case, to show the $M$-convergence of the energy forms.

The paper is organized as follows: the geometry of the interior and exterior domains is detailed in Section \ref{sec-geometry}, as well as the prefractal geometry. Section \ref{sec-function-spaces} is devoted to the study of the function spaces involved in the paper, and emphasis is put on trace and extension results for the domains under study. 
The considered transmission problem is described in Section \ref{sec-transmission-pb}. 
Section \ref{sec-convergence} is devoted to the $M$-convergence of the energy forms associated with the problem with prefractal interface to the energy form of the problem 
with fractal interface, in the case when the boundary of $\Omegai$ has no self-contact. 
In Section \ref{r=rstar}, a particular geometry in which the fractal part of the boundary of $\Omegai$ self-intersects is considered;
an extension result is proved in this particular case and the $M$-convergence of the energy forms follows.

%
\section{The geometry}\label{sec-geometry}
\subsection{The fractal interface}\label{fractal-interface}
\subsubsection{Definitions}\label{def}
Consider four real numbers $\rat, \beta_1,\beta_2, \theta$ such that $1/2\leqs \rat< 1/\sqrt{2}$, $\beta_1>0$, $\beta_2>0$ and  $0 \leqs \theta<\pi/2$.
Let  $f_i$, $i=1,2$ be the  two similitudes in $\R^2$ given by
\begin{equation*}
    f_1  \begin{pmatrix} x_1\\x_2 \end{pmatrix}
      ~=~
      \begin{pmatrix} -\beta_1 \\ \beta_2 \end{pmatrix}
	+\rat\begin{pmatrix} x_1 \cos \theta- x_2 \sin \theta\\x_1 \sin \theta+ x_2 \cos \theta \end{pmatrix}, \end{equation*}
\begin{equation*}
    f_2 \begin{pmatrix}  x_1\\ x_2 \end{pmatrix}
    ~=~ \begin{pmatrix}  \beta_1\\ \beta_2 \end{pmatrix}
    +\rat \begin{pmatrix}  x_1 \cos \theta + x_2 \sin \theta\\-x_1 \sin \theta+ x_2 \cos \theta \end{pmatrix}.
\end{equation*}
The two similitudes have the  same dilation ratio $\rat $ and  opposite angles $\pm \theta$. One can obtain $f_2$ by composing $f_1$ with the symmetry with respect to the vertical axis $\{x_1=0\}$.\\\noindent
Let $\Gamma$ denote the self-similar set associated with the similitudes $f_1$ and $f_2$, \textit{i.e.} $\Gamma$ is the unique compact subset of $\R^2$ such that
\begin{equation*}
	\Gamma = f_1(\Gamma)\cup f_2(\Gamma).
\end{equation*}
It was stated in \cite{MR1684366} (see \cite{deheuvelsphd} for a complete proof) that for any $\theta$, $0\leqs \theta<\pi/2$, there exists a unique positive number $\rat ^\star_\theta\in [1/2, 1 /{\sqrt 2}[$ which only depends on the angle $\theta$ such that 
\begin{equation*}
\label{SEP:eq:10}
\begin{array}[c]{ll}
\diamond~~ \hbox{if}~ 0<\rat<\rat^\star_\theta, &\hbox{then}~ \Gamma ~\hbox{is totally disconnected,} \\
\diamond~~\hbox{if}~ \rat=\rat^\star_\theta, &\hbox{then}~  \Gamma \hbox{ is connected}.         \end{array}
\end{equation*}
In the following, we will always assume that $\rat \leqs \rat^\star_\theta$.
\paragraph{Notations}
For every integer $n>0$, we note $\cA_n=\{1,2\}^n$. For $\sigma\in \cA_n$, we note $f_\sigma$ the similitude $f_{\sigma_1}\circ\ldots \circ f_{\sigma_n}$. We agree to extend the notation to the case $n=0$: $f_\sigma=\text{Id}$ if $\sigma\in \cA_0$. We also introduce the notation $\cA := \bigcup_{n\geqs 0} \cA_n$.
\\[2mm]
For $\sigma\in \cA$, we note $\Gamma^\sigma = f_\sigma(\Gamma^0)$, and for every integer $n\geqs 0$, $\displaystyle \Gamma^n = \bigcup_{\sigma\in {\cA_n}} f_\sigma(\Gamma^0)$.

\subsubsection{Hausdorff dimension of $\Gamma$}\label{dimension-gamma}
If $\rat\leqs \rat^\star_\theta$, then it can be seen that the open set condition (or Moran condition) holds, see \cite{MR0014397} or \cite{MR1840042} for a definition. The open set condition is satisfied \textit{e.g.} for the domain $\Omegai$ defined in \eqref{def-dom-ram}, if Assumption \ref{assumption1} below is satisfied (Theorem \ref{exist-ram-dom} proves the existence of such a domain).
\\ 
The open set condition implies that the Hausdorff dimension of $\Gamma$ is
	$$d:= \dim_H \Gamma = -\frac{\log 2}{\log \rat}$$
see \cite{MR0014397, MR1840042}. If $0\leqs \theta<\pi/2$, then  $1/2 \leqs \rat \leqs \rat^\star_\theta<1/\sqrt{2}$ and thus $1\leqs d <2$.
\\[1mm]
In the case when $\rat=\rat^\star_\theta$, introduce the set
\begin{equation}\label{Xi}
\Xi = f_1(\Gamma)\cap f_2(\Gamma).
\end{equation}
In this case, the fractal set $\Gamma$ self-intersects, and 
union of the images of $\Xi$ by the similitudes $f_{\sigma_1}\circ \ldots \circ f_{\sigma_n}$, $\sigma_1,\ldots,\sigma_n \in \{1,2\}$ is the set of the multiple points of $\Gamma$.
\\
Two situations can occur, depending on the angle $\theta$ (see \cite{MR1684366}):
\begin{list}{--}{\itemsep1mm \topsep1mm}
\item if $\theta$ is not of the form $\frac\pi{2k}$ for any integer $k>0$, then $\Xi$ is reduced to a single point, and $\Gamma$ has countably many multiple points,
\item if $\theta$ is of the form $\frac\pi{2k}$ where $k>0$ is an integer, then $\Xi$ is a Cantor set, and the Hausdorff dimension of $\Xi$, noted $\dim_H \Xi$, is $\frac{\dim_H \Gamma}2$.
\end{list}

\subsubsection{The self-similar measure $\mu$}
Recall the classical result on self-similar measures,  see  \cite{MR1449135,MR625600} and \cite{MR1840042} page 26.
\begin{theorem}
There exists a unique Borel regular probability measure $\mu$ on $\Gamma$ such that for any 
Borel set $A\subset\Gamma$,
\begin{equation}
  \label{SEP:eq:18} 
\mu(A)=\frac 1 2 \mu\left(f_{1}^{-1}(A)\right)+\frac 1 2 \mu \left(f_{2}^{-1}(A)\right).
\end{equation}
\end{theorem}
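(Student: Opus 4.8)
The plan is to use Hutchinson's classical fixed-point argument for iterated function systems. Let $\cP(\Gamma)$ denote the set of Borel probability measures carried by the compact set $\Gamma$, and define the pushforward (Markov) operator $T$ on $\cP(\Gamma)$ by
\[
T\nu = \tfrac12\,(f_1)_\#\nu + \tfrac12\,(f_2)_\#\nu,
\]
where the image measure $(f_i)_\#\nu$ is characterized by $(f_i)_\#\nu(A)=\nu\bigl(f_i^{-1}(A)\bigr)$ for every Borel set $A$. First I would check that $T$ maps $\cP(\Gamma)$ into itself: $T\nu$ is manifestly a Borel probability measure, and since $f_i(\Gamma)\subset f_1(\Gamma)\cup f_2(\Gamma)=\Gamma$, each $(f_i)_\#\nu$ is again carried by $\Gamma$, hence so is $T\nu$. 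The relation \eqref{SEP:eq:18} is then nothing but the fixed-point equation $T\mu=\mu$, so it suffices to produce a unique fixed point of $T$.

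To that end I would equip $\cP(\Gamma)$ with the Hutchinson (Kantorovich--Wasserstein) metric
\[
d(\mu,\nu) = \sup\Bigl\{ \textstyle\int_\Gamma \ph\,d\mu - \int_\Gamma \ph\,d\nu \ :\ \ph\in\Lip(\Gamma),\ \Lip(\ph)\leqs 1 \Bigr\}.
\]
Because $\Gamma$ is compact this supremum is finite, the metric $d$ metrizes the weak-$\ast$ topology on $\cP(\Gamma)$, and $(\cP(\Gamma),d)$ is a complete metric space; these are standard facts that I would simply cite. The crux is the contraction estimate. For a test function with $\Lip(\ph)\leqs 1$, the definition of $T$ gives
\[
\int_\Gamma \ph\,d(T\mu) - \int_\Gamma \ph\,d(T\nu) = \tfrac12\sum_{i=1,2}\Bigl(\int_\Gamma \ph\circ f_i\,d\mu - \int_\Gamma \ph\circ f_i\,d\nu\Bigr).
\]
Since $f_i$ is a similitude of ratio $\rat$, one has $\Lip(\ph\circ f_i)\leqs \rat$, so $\rat^{-1}\,\ph\circ f_i$ is admissible in the supremum defining $d$; each bracket is therefore at most $\rat\,d(\mu,\nu)$, and taking the supremum over $\ph$ yields $d(T\mu,T\nu)\leqs \rat\,d(\mu,\nu)$.

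As $\rat<1/\sqrt2<1$, the operator $T$ is a strict contraction, and the Banach fixed-point theorem furnishes a unique $\mu\in\cP(\Gamma)$ with $T\mu=\mu$, which is exactly \eqref{SEP:eq:18}. Borel regularity is automatic, a finite Borel measure on a compact metric space being regular. The argument is essentially routine once the framework is assembled; the only point requiring genuine care is the completeness of $(\cP(\Gamma),d)$ together with the fact that $d$ induces weak-$\ast$ convergence, since this is what simultaneously makes Banach's theorem applicable and guarantees that the fixed point obtained as a $d$-limit is a bona fide measure rather than merely a limiting linear functional.
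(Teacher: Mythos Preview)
Your argument is correct and is precisely Hutchinson's contraction-mapping proof. The paper does not give its own proof of this statement: it is quoted as a classical result with references to Hutchinson and to Falconer's monograph, so your proposal simply fills in the argument that the cited sources contain.
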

\noindent The  measure $\mu$ is called the {\it self-similar measure defined in the self-similar 
triplet $ \left(\Gamma, f_1,f_2\right)$}. 
\\[1mm]
Let $L^\p_\mu(\Gamma)$, $\p \in [1,+\infty)$ be the space of the measurable functions $v$ on $\Gamma$ such that 
$\int_{\Gamma}|v|^\p \dx \mu<\infty$, endowed with the norm  $\Vert v\Vert_{L^\p(\Gamma)}= \left(\int_{\Gamma}|v|^\p \dx \mu\right)^{1/p}$. 
A Hilbert basis of $L^2_\mu(\Gamma)$ can be constructed \textit{e.g.} with Haar wavelets.\\[1mm]
The space $W^{s,\p}(\Gamma)$ for $s\in (0,1)$ and $\p \in [1,\infty)$ is defined as the space of functions $v\in L^\p(\Gamma)$ such that ${\vert v \vert}_{W^{s,\p}(\Gamma)} < \infty$, where
	\[ {\vert v \vert}_{W^{s,\p}(\Gamma)} ~=~ {\left( \int_{\Gamma} \int_{\Gamma} \frac{{\vert v(x)-v(y) \vert}^\p}{{\vert x-y\vert}^{d+\p s}} \dx \mu(x) \dx \mu(y) \right)}^\frac 1 \p. \]
Endowed with the norm ${\Vert v \Vert}_{W^{s,\p}(\Gamma)} = {\Vert v \Vert}_{L^\p(\Gamma)} +  {\vert v \vert}_{W^{s,\p}(\Gamma)} $, the spaces $W^{s,p}(\Gamma)$ are Banach spaces. In the special case $p=2$, the space $W^{s,p}(\Gamma)$ is a Hilbert space, and is noted $H^s(\Gamma)$.

\begin{remark}\label{norme-Lip}
In the special case when $\theta=0$ and $r=r^\star_\theta=1/2$, the set $\Gamma$ is in fact a line segment. This geometry will be discussed in Section \ref{r=rstar} (see Figure \ref{trapeze}).
In this case, it can be proved that if $s\in (0,1)$, then an equivalent norm of the space $W^{s,p}(\Gamma)$ is given by
\begin{equation}\label{H-Lip}
{\Vert v \Vert}_{\mathrm{Lip}_s^{p,p}(\Gamma)}^p ~:= ~ \int_\Gamma {|v|}^p \dx\mu + \sum_{k\geqs 0} 2^{skp} \sum_{\sigma\in \cA_k} \int_{2f_\sigma  (\Gamma)} {|v-{\langle v \rangle}_{2f_\sigma(\Gamma)}|}^p\dx x
\end{equation}
where $2f_\sigma(\Gamma)$ is the intersection with $\Gamma$ of the segment obtained by expanding the line segment $f_\sigma(\Gamma)$ with a factor 2 around its center (see \cite{MR2032227,MR820626}). As in the rest of the paper, if $v$ is a measurable function in a measured space $(X,m)$, the notation $\langle v \rangle_X$ refers to the mean value $\frac 1{m(X)} \int_X v \dx m$.\end{remark}

\subsection{The domains $\Omegai$ and $\Omegae$}\label{domains}
\noindent Call $P_1=(-1,0)$ and $P_2=(1,0)$ and $\Gamma^0$ the line segment $\Gamma^0=[P_1 P_2]$.  Let us assume  that $f_2(P_1)$ and $f_2(P_2)$ have positive coordinates, \emph{i.e.} that
\begin{equation}
  \label{cond1}
\rat \cos\theta <\beta_1
~ \text{ and } ~ \rat \sin\theta < \beta_2.
\end{equation} 
\noindent Let us also assume that the open domain $Y^0$ inside the closed polygonal line joining the points $ P_1$, $P_2$, $f_2(P_2)$, $f_2(P_1)$, $f_1(P_2)$, $f_1(P_1)$, $P_1$ in this order must be convex and hexagonal, except if $\theta=0$, in which case it is trapezoidal. With (\ref{cond1}), this is true if and only if
\begin{equation}
  \label{cond2}
(\beta_1-1)\sin\theta +\beta_2 \cos\theta > 0.
\end{equation}

\noindent Under assumptions (\ref{cond1}) and (\ref{cond2}), the domain  $Y^0$ is contained in the half-plane $x_2>0$ and symmetric with respect to the vertical axis $x_1=0$.
\\[1mm]
Call $K^0= \overline{ Y^0}$. It  is possible to glue together $K^0$, $f_1(K^0)$ and $f_2(K^0)$ and obtain a new polygonal domain. The  assumptions \eqref{cond1} and \eqref{cond2} imply that $Y^0\cap f_1( Y^0)=\emptyset$ and $ Y^0\cap f_2( Y^0)=\emptyset$.
\begin{figure}[h!]
\centering
\scalebox{0.7}{\input{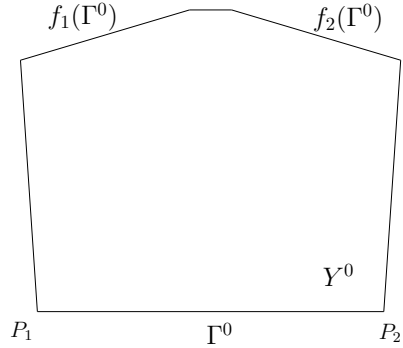}}
\caption{The construction of the first cell $Y^0$}
\end{figure}

\noindent 
Let the open domain $\Omegai$ (see Figure \ref{fig-domram}) be defined as follows:
\begin{equation}\label{def-dom-ram}
 \Omegai = \hbox{Interior} \left(\bigcup_{\sigma\in \cA} f_\sigma (K^0)\right),
\end{equation}
with the notations of \S \ref{def}.
\\[3mm]
For a given $\theta$, with $\rat^\star_\theta$ defined as above, the following assumption on $(\alpha,\beta)$ will be made:
\begin{assumption}\label{assumption1}
For $0\leqs \theta<\pi/2$, the parameters $\alpha$ and $\beta$ satisfy \eqref{cond1} and \eqref{cond2} for $\rat=\rat^\star_\theta$, and are such that
\begin{equation*}
\left\{
\begin{array}[c]{l}
\hbox{\textit{i}. for all $\rat $, $0<\rat \leqs \rat^\star_\theta$, the sets $Y^0$, $f_\sigma ({ Y^0})$, $\sigma\in \cA$, are pairwise disjoint,}\\
\hbox{\textit{ii}.  for all $\rat $, $0<\rat< \rat^\star_\theta$, $f_1(\overline {\Omega}_\text{int})\cap f_2(\overline {\Omega}_\text{int})=\emptyset$,}\\
\hbox{\textit{iii}. for $\rat =\rat^\star_\theta$, $f_1(\overline{\Omega}_\text{int}) \cap f_2(\overline{\Omega}_\text{int})\not=\emptyset$.} 
\end{array}
\right.
\end{equation*}
\end{assumption}

\begin{remark}
Assumption \ref{assumption1} implies that if $r=r^\star_\theta$, then $f_1( {\Omegai})\cap f_2( {\Omegai})=\emptyset$.
\end{remark}
%
\noindent
In the case $\theta=0$, Assumption \ref{assumption1} is satisfied by any $\alpha>\rat^\star_\theta=1/2$ and $\beta>0$. The following theorem, proved in \cite{YANT2010}, asserts that for all $\theta\in (0,\pi/2)$, there exists $(\alpha,\beta)$ satisfying Assumption 1. 

\begin{theorem}\label{exist-ram-dom}
If $\theta\in (0, \pi/2)$,  then for every $\alpha>\rat^\star_\theta \cos \theta$, there exists $\bar \beta>0$ such that for all $\beta\geqs \bar \beta$, $(\alpha,\beta)$ satisfies Assumption \ref{assumption1}.
\end{theorem}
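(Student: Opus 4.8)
The plan is to fix $\theta\in(0,\pi/2)$ and $\alpha=\beta_1>\rat^\star_\theta\cos\theta$, and to choose $\bar\beta$ so large that, for every $\beta=\beta_2\geqs\bar\beta$ and every $\rat\in(0,\rat^\star_\theta]$, the cells of $\Omegai$ are thin; all three items of Assumption~\ref{assumption1} then follow from a single statement about the two principal subtrees. First I note that \eqref{cond1} and \eqref{cond2} hold for $\beta$ large: since $0<\theta<\pi/2$ one has $\cos\theta,\sin\theta>0$, so the second inequality in \eqref{cond1} at $\rat=\rat^\star_\theta$, namely $\rat^\star_\theta\sin\theta<\beta$, and \eqref{cond2}, namely $(\alpha-1)\sin\theta+\beta\cos\theta>0$, hold once $\beta$ exceeds an explicit threshold, while the first inequality in \eqref{cond1} is exactly the hypothesis $\alpha>\rat^\star_\theta\cos\theta$; decreasing $\rat$ only relaxes these. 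Thus $Y^0$ is a convex hexagon and $Y^0\cap f_i(Y^0)=\emptyset$ for $i=1,2$.

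Next I would reduce Assumption~\ref{assumption1} to the geometry of $f_1(\overline{\Omegai})$ and $f_2(\overline{\Omegai})$ by self-similarity. For two distinct words $\sigma,\tau\in\cA$ with longest common prefix $\rho$, the map $f_\rho$ is a similitude, so after applying $f_\rho^{-1}$ the disjointness of the cells $f_\sigma(Y^0)$ and $f_\tau(Y^0)$ reduces to one of two model situations: if one word extends the other it reduces to
\[ (R)\qquad Y^0\cap f_i(\overline{\Omegai})=\emptyset,\quad i=1,2, \]
and if the words first differ it reduces to the separation of the two principal subtrees $f_1(\overline{\Omegai})$ and $f_2(\overline{\Omegai})$. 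Consequently item i follows from $(R)$ together with the fact that the two subtrees have disjoint interiors; item ii is $f_1(\overline{\Omegai})\cap f_2(\overline{\Omegai})=\emptyset$ for $\rat<\rat^\star_\theta$; and item iii is $f_1(\overline{\Omegai})\cap f_2(\overline{\Omegai})\neq\emptyset$ for $\rat=\rat^\star_\theta$. Property $(R)$ is elementary: since $\overline{\Omegai}\subset\{x_2\geqs 0\}$, the subtree $f_i(\overline{\Omegai})$ lies in the closed half-plane bounded by the line through the edge $f_i(\Gamma^0)$ and lying on the opposite side from the open hexagon $Y^0$, so the two are disjoint.

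The heart of the matter is the separation of $f_1(\overline{\Omegai})$ and $f_2(\overline{\Omegai})$, and this is where the largeness of $\beta$ is essential. The difficulty is that the rotations $\pm\theta$ make the inner branches of each subtree curl back toward the axis $\{x_1=0\}$, so the two subtrees interleave and cannot be separated by any fixed line or cone. To control this I would rescale by $\beta^{-1}$: in the variables $y=x/\beta$ the generators become $y\mapsto(\pm\alpha/\beta,1)+\rat R_{\pm\theta}\,y$, so $\beta^{-1}\overline{\Omegai}$ converges, in the Hausdorff metric as $\beta\to\infty$, to a limit set $T$ built of zero-width segments — the cells become arbitrarily thin because their base widths scale like $\rat^n$ while their heights scale like $\beta\rat^n$. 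The endpoint set of $T$ is a copy of $\Gamma$, and since $\rat^\star_\theta$ depends only on $\theta$, the two halves of $T$ are disjoint when $\rat<\rat^\star_\theta$ and meet only at the self-contact set of $\Gamma$ when $\rat=\rat^\star_\theta$. The base-width/height ratio of the cells being $O(1/\beta)$ uniformly in $\rat$ and in the generation, a tubular neighbourhood of the skeleton of relative width $O(1/\beta)$ contains $\overline{\Omegai}$, and for $\beta$ large its two halves are separated whenever the skeleton halves are.

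The step I expect to be the main obstacle is the borderline value $\rat=\rat^\star_\theta$, together with the uniformity in $\rat$: a single $\bar\beta$ must serve for all $\rat\in(0,\rat^\star_\theta]$, yet the separation of the skeleton halves degenerates as $\rat\uparrow\rat^\star_\theta$, where they touch and the tubular estimate becomes useless. There one must argue locally: the contact points lie on the multiple-point set $\bigcup_{\sigma}f_\sigma(\Xi)$, with $\Xi=f_1(\Gamma)\cap f_2(\Gamma)$, and near such a point the domain degenerates to the fractal itself — the approaching cells have widths tending to $0$ faster than their distance to the contact — so the two subtrees meet only along their boundaries and keep disjoint interiors. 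I would carry out this analysis in self-similar coordinates centred at $\Xi$, distinguishing the case where $\Xi$ is a single point from the case $\theta=\frac{\pi}{2k}$ where $\Xi$ is a Cantor set, and then deduce the strict disjointness for $\rat<\rat^\star_\theta$ from $f_1(\Gamma)\cap f_2(\Gamma)=\emptyset$ there. Finally item iii is immediate: since $\Gamma\subset\overline{\Omegai}$ and $f_1(\Gamma)\cap f_2(\Gamma)=\Xi\neq\emptyset$ at $\rat=\rat^\star_\theta$, one gets $f_1(\overline{\Omegai})\cap f_2(\overline{\Omegai})\supseteq\Xi\neq\emptyset$.
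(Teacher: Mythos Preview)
The paper does not contain a proof of this theorem: it is stated and attributed to \cite{YANT2010}, with no argument reproduced here, so there is nothing in the present paper to compare your attempt against. I can therefore only comment on your sketch on its own terms.

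Your reductions are correct and well organised: the verification of \eqref{cond1}--\eqref{cond2} for large $\beta_2$, the self-similar reduction of item~\textit{i} to the model case $(R)$ together with the separation of the two principal subtrees, the half-plane argument for $(R)$, and the observation that item~\textit{iii} follows at once from $\Xi\subset f_1(\overline{\Omegai})\cap f_2(\overline{\Omegai})$ all stand. The rescaling heuristic --- that for large $\beta_2$ the cells are thin and $\beta_2^{-1}\overline{\Omegai}$ is Hausdorff-close to a one-dimensional skeleton whose halves are disjoint for $\rat<\rat^\star_\theta$ --- is also the right intuition for item~\textit{ii} away from the critical ratio.

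The genuine gap is exactly the one you name, and you have not closed it. At $\rat=\rat^\star_\theta$ the skeleton halves themselves touch, so a tubular-neighbourhood bound yields nothing; you propose a local analysis near $\Xi$ ``in self-similar coordinates'' but do not carry it out, and that analysis is the entire content of the theorem in the critical case. Your supporting claim that ``the approaching cells have widths tending to $0$ faster than their distance to the contact'' is not correct as stated: for a generation-$n$ cell on an inner branch, both its base width and its distance to the nearest contact point are of order $\rat^n$, hence comparable; what must actually be shown is that the \emph{ratio} of these two quantities can be made small by taking $\beta_2$ large, uniformly in $n$ and in the branch. The uniformity in $\rat\in(0,\rat^\star_\theta]$ that you flag is a second unresolved issue: Hausdorff convergence of the rescaled domains to the skeleton gives no control as $\rat\uparrow\rat^\star_\theta$, and you provide no mechanism linking the subcritical and critical regimes under a single choice of $\bar\beta$. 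As written, the proposal is a sound roadmap with the hard step correctly located but not executed.
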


Let $D$ be an open bounded domain with a Lipschitz boundary, containing the closure of $\Omegai$. The exterior domain $\Omegae$ is defined by 
\begin{equation}
\Omegae := \text{Interior} (D\setminus \Omegai).
\end{equation}

\begin{remark}
The assumption that $ \Omegai \Subset D$ may be relaxed: in fact, it would be enough to assume that $\Gamma \Subset D$.
\end{remark}

Displayed on Figure \ref{fig-domram} are examples of the domains $\Omegai$ and $\Omegae$,  for the parameters $\theta=\pi/5$ in the left-hand side and $\theta=\pi/4$ in the right-hand side. 

\begin{figure}[h!]
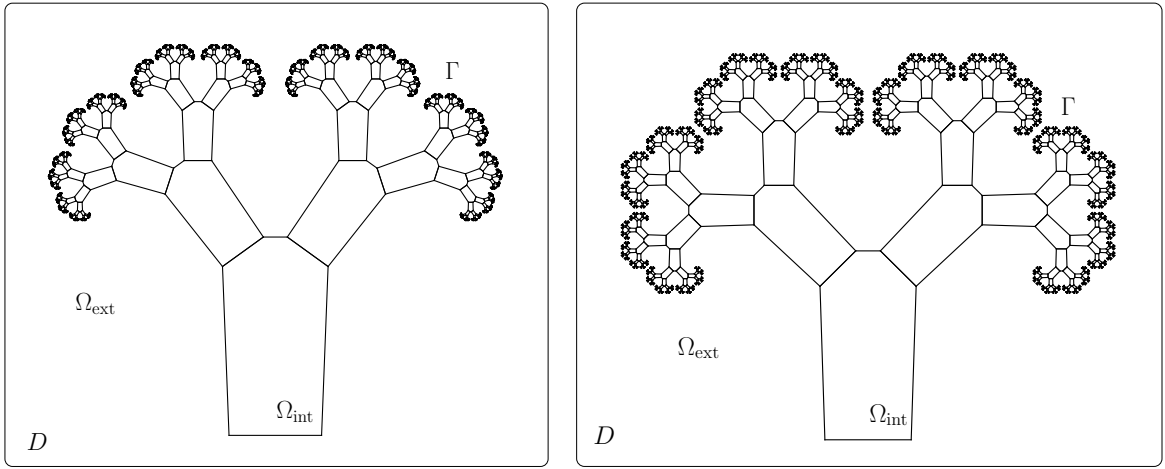

   \centering\[
\scalebox{0.45}{\input{fig1.pstex_t}}
~~~
\scalebox{0.4663}{\input{fig2.pstex_t}}
  \]
   \caption{The  ramified domain $\Omega$ for $\theta=\pi/5$ (left) and $\theta=\pi/4$ (right) when $\rat <\rat^\star_\theta$, $\beta_1=0.7$, $\beta_2=4$.}
   \label{fig-domram}
 \end{figure}

\subsection{The truncated domain $\Omegai^n$ and the prefractal interface}
For every integer $n\geqs 0$, the truncated domain $\Omegai^n$ is defined by
\begin{equation}
 	\Omegai^n = \text{Interior}\left(\bigcup_{0\leqs k\leqs n} \bigcup_{\sigma \in {\cA_k}} f_\sigma(K^0)\right),
\end{equation}
with the notations of \S \ref{def}. As above, the exterior domain associated to $\Omegai^n$  is 
\begin{equation}
	 \Omegae^n = \text{Interior}(D\setminus \Omegai^n).
\end{equation}
Note that the set $\Gamma^n$ defined in \S \ref{def} is a part of the boundary of $\Omegai^n$. The sets $\Gamma^n$, $n\geqs 0$, will be referred to as \textit{prefractal approximations} of the fractal set $\Gamma$.

\section{Function spaces}\label{sec-function-spaces}

Hereafter, we consider a domain $\Omegai$ as defined in \ref{domains}, with $\theta$ in $[0,\pi/2)$ and $\rat \leqs \rat^\star_\theta$, and we assume that the parameters $\alpha$, $\beta$ are such that Assumption \ref{assumption1} is satisfied.
\\[1mm]
We define $W^{1,p}(\Omega)$, $p=[1,\infty]$, $\Omega = \Omegai$ or $\Omega=\Omegae$, to be the space of functions in $L^p(\Omega)$ with first order partial derivatives in $L^p(\Omega)$.\\ 
The space $W^{1,p}(\Omega)$ is a Banach space with the norm
$
  \left(\|u\|_{L^p(\Omega)}^p +\| \frac {\partial u}{\partial x_1  }\|_{L^p(\Omega)}^p  +\| \frac {\partial u}{\partial x_2  }\|_{L^p(\Omega)}^p \right)^{1/p}$,
see for example \cite{MR2424078}. Elementary calculus shows that
$
\|u\|_{W^{1,p}(\Omega)} := \left(\|u\|_{L^p(\Omega)}^p +\| \nabla u\|_{L^p(\Omega)}^p
\right)^{1/p}
$
is an equivalent norm, with
$\| \nabla u\|^p_{L^p(\Omega)}:=   \int_{\Omega} |\nabla u|^p $ and 
$|\nabla u|=\sqrt{|\frac {\partial u}{\partial x_1}|^2 + |\frac {\partial u}{\partial x_2}|^2}$.
\\[1mm]
In the special case $p=2$, the space $W^{1,p}(\Omega)$ is a Hilbert space, and is noted $H^1(\Omega)$.\\[1mm]
The spaces $W^{1,p}(\Omegai)$ as well as elliptic boundary value problems in $\Omegai$ have been studied in  \cite{ach-tch-2007-1}, with, in particular Poincar{\'e} inequalities and a Rellich compactness theorem. The same results in a similar but different geometry were proved by Berger \cite{MR1800198} with other methods.

\subsection{Trace results}

\subsubsection{The classical definition of traces}
We recall the classical definition of a trace operator on $\partial \omega$ when $\omega$ is an open subset of $\R^2$ (see for instance \cite{MR820626} p. 206).
\begin{definition}\label{def-trace}
Consider an open set $\omega\subset \R^2$. The function  $u\in L^1_{loc}(\omega)$ can be strictly defined at $x\in\overline \omega$ if the limit
\begin{equation}
  \label{eq:25}
{\overline{u}}(x)=\lim_{r\rightarrow 0}\frac{1}{|B(x,r)\cap \omega|}\int_{B(x,r)\cap \omega}u(y) \dx y
\end{equation}
exists, where $|B(x,r)\cap \omega|$ is the 2-dimensional Lebesgue measure of the set $B(x,r)\cap \omega$. In this case, $x$ is said to be a Lebesgue point of $u$.
\\
The trace $u_{|\partial \omega}$ is defined to be the function given by $u_{|\partial \omega}(x)= {\overline{u}}(x)$ at every point $x\in \partial \omega$ such that the limit ${\overline{u}}(x)$ exists.
\end{definition}

\begin{remark}
Recall that for any $p>1$, a function which belongs to $W^{1,p}(\R^n)$ can be strictly defined except on a set with zero $p$-capacity, see for example \cite{MR0435361} and \cite{MR1404091}.
\end{remark}

\subsubsection{A trace theorem on $\Gamma$}
\noindent
It has been shown in \cite{comparison} (see Theorem 11) that every function in $W^{1,p}(\Omegai)$ can be strictly defined on $\Gamma$ $\cH^1$-almost everywhere, where $\cH^1$ is the one-dimensional Hausdorff measure. Moreover, the following trace result holds.

\begin{theorem} (see \cite{ADT})\label{trace-thm}
\begin{list}{\textbullet}{\topsep1mm \itemsep0mm}
\item Assume $\rat<\rat^\star_\theta$. For all $p\in ]1,\infty]$, if $u\in W^{1,p}(\Omegai)$, then $u_{|\Gamma}\in W^{1-\frac{2-d}p,p}(\Gamma)$, and there exists a constant $C>0$ independent of $u$ such that
	$${\|u_{|\Gamma}\|}_{W^{1-\frac{2-d}p,p}(\Gamma)} \leqs C {\|u\|}_{W^{1,p}(\Omegai)}.$$
\item Assume $\rat=\rat^\star_\theta$, then
\begin{list}{$\diamond$}{\topsep0mm \itemsep0mm}
\item the previous result holds if $1<p<2-\dim_H \Xi$,
\item if $p\geqs 2-\dim_H \Xi$, then 
	$W^{1,p}(\Omegai)_{|\Gamma} \subset W^{s,p}(\Gamma) ~ \text{for all } s<\frac 1 p (d-\dim_H \Xi),$
and the embedding is continuous. Moreover, if $s>\frac 1 p (d-\dim_H \Xi)$, then ${W^{1,p}(\Omegai)}_{|\Gamma} \not\subset W^{s,p}(\Gamma)$.
\end{list}
\end{list}
\end{theorem}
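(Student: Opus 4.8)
The plan is to isolate two facts that are independent of the transmission structure: that $\Gamma$ is Ahlfors-regular of dimension $d$ (a $d$-set), and that $\Omegai$ enjoys a $W^{1,p}$-extension property. Because the open set condition holds for $\rat\leqs\rat^\star_\theta$, the self-similar measure $\mu$ of \eqref{SEP:eq:18} satisfies $\mu(B(x,r)\cap\Gamma)\asymp r^d$ for $x\in\Gamma$, $0<r\leqs1$. This regularity is exactly what makes $W^{s,p}(\Gamma)$, $0<s<1$, a Besov space in the sense of Jonsson--Wallin \cite{MR820626}, and it yields the multiresolution description already visible in \eqref{H-Lip}: up to equivalence, $\|v\|_{W^{s,p}(\Gamma)}^p$ is comparable to $\int_\Gamma|v|^p\dx\mu$ plus a sum over the cells $f_\sigma(\Gamma)$, $\sigma\in\cA$, of the local oscillations of $v$, each cell of generation $k$ carrying the weight $\rat^{-kps}$. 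The exponent $\beta:=1-\frac{2-d}p$ will arise from the self-similar scaling $|\nabla(u\circ f_\sigma)|=\rat^{|\sigma|}\,|(\nabla u)\circ f_\sigma|$ balanced against $\mu(f_\sigma(\Gamma))=2^{-|\sigma|}$.

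For $\rat<\rat^\star_\theta$ I would argue as in the introduction: $\Omegai$ is then an $(\eps,\delta)$-domain, hence carries a bounded extension $\cE\colon W^{1,p}(\Omegai)\to W^{1,p}(\R^2)$ for all $p\in[1,\infty]$ \cite{MR631089,MR0290095}. For $1<p<\infty$ one composes $\cE$ with the Jonsson--Wallin trace theorem on the $d$-set $\Gamma$, whose restriction of $W^{1,p}(\R^2)$ is precisely $W^{\beta,p}(\Gamma)$ (here $0<\beta<1$ since $1\leqs d<2$ and $p>1$), giving the bounded operator $u\mapsto u_{|\Gamma}$. The case $p=\infty$ is direct, a Lipschitz function extending to and restricting from $\R^2$ as a Lipschitz function, which matches $\beta=1$. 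It remains to check that this trace, built from $\mu$-averages, coincides $\mu$-a.e. with the Lebesgue-point trace of Definition \ref{def-trace}; this holds because the exceptional set carries zero $p$-capacity, hence has dimension at most $2-p<d$ and is $\mu$-negligible.

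For $\rat=\rat^\star_\theta$ the extension fails, since $\Omegai$ pinches at the multiple points, so I would estimate the trace norm directly from the multiresolution sum. Splitting pairs $(x,y)$ by the generation $k$ of the smallest cell $f_\sigma(\Gamma)$ containing both, the diagonal part (pairs in a common cell) is handled as before: a local trace-and-Poincar\'e inequality bounds the oscillation of $u_{|\Gamma}$ on $f_\sigma(\Gamma)$ by $\|\nabla u\|_{L^p(f_\sigma(\Omegai))}$, and the scaling above turns the weighted sum into a geometric series convergent iff $s<\beta$. The new contribution comes from pairs $x\in f_{\sigma 1}(\Gamma)$, $y\in f_{\sigma 2}(\Gamma)$ lying on opposite sides of a multiple point $f_\sigma(\Xi)$: there $|x-y|$ may be arbitrarily small while $u_{|\Gamma}(x)$ and $u_{|\Gamma}(y)$ retain the two distinct branch limits of $u$ at that point. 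Bounding the jump by the cell energy, using $\mu(B(z,t)\cap\Gamma)\asymp t^d$ near the contact points, and summing over the multiple points --- whose number and $\mu$-mass at scale $\rat^k$ are governed by $\dim_H\Xi$ --- yields a second geometric series convergent iff $s<\frac1p(d-\dim_H\Xi)$. The threshold $p=2-\dim_H\Xi$ is explained by capacity: the multiple-point set $\bigcup_{\sigma\in\cA}f_\sigma(\Xi)$ has dimension $\dim_H\Xi$, so vanishing $(1,p)$-capacity precisely when $\dim_H\Xi<2-p$; for such $p$ it is removable, $u$ has no jump, the second series is absent and the full exponent $\beta$ survives, whereas for $p\geqs2-\dim_H\Xi$ the jump series dominates and forces $s<\frac1p(d-\dim_H\Xi)$.

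For the sharpness statement I would exhibit $u\in W^{1,p}(\Omegai)$ that is nearly constant, with opposite signs, on the two branches meeting at each contact point, the constants tuned so that $\|\nabla u\|_{L^p(\Omegai)}$ stays finite while the trace keeps a prescribed jump at every generation; computing its $W^{s,p}(\Gamma)$-seminorm then exhibits divergence as soon as $s>\frac1p(d-\dim_H\Xi)$. I expect the self-contact regime to be the main obstacle: one has to control the cross-terms near the multiple points in the multiresolution sum, match the counting of contact points at each scale to $\dim_H\Xi$, and pin down the capacity threshold $2-\dim_H\Xi$. The no-contact case, by comparison, is little more than a packaging of the extension theorem with Jonsson--Wallin.
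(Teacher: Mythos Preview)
The paper does not prove this theorem: it is stated with the attribution ``(see \cite{ADT})'' and no proof is given here. So there is nothing in the present paper to compare your argument against line by line.

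That said, your outline is broadly in the spirit of how such trace results are obtained, and a few comments are in order. In the subcritical case $\rat<\rat^\star_\theta$, your route---extend to $W^{1,p}(\R^2)$ via Jones' theorem, then restrict to the $d$-set $\Gamma$ via Jonsson--Wallin---is the standard two-step argument and is correct; the identification of the two notions of trace $\mu$-a.e.\ is indeed the only delicate point, and the authors address it in \cite{comparison}.

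In the critical case you write that ``the extension fails,'' but this is only partly true: Theorem~\ref{ext-theorem} in the present paper records that $\Omegai$ \emph{does} have the $W^{1,p}$-extension property for $1<p<p^\star=2-\dim_H\Xi$. Hence for that range the same extension-plus-Jonsson--Wallin argument goes through verbatim and yields the full exponent $1-\frac{2-d}p$, which is what the first $\diamond$-item asserts. Your capacity heuristic for why $p^\star$ is the threshold is correct, but it is simpler to invoke the extension result directly.

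For $p\geqs p^\star$ your plan to estimate the $W^{s,p}(\Gamma)$-seminorm via the multiresolution sum and to isolate the cross-branch contributions near the self-contact set is the right picture; this is essentially the mechanism analysed in \cite{ADT}. Your sharpness construction---branch-wise near-constants with opposite signs---is also the natural candidate. What your sketch leaves genuinely open is the quantitative link between $\dim_H\Xi$ and the growth of the number (or $\mu$-mass) of contact points at scale $\rat^k$: this counting is where the exponent $\frac1p(d-\dim_H\Xi)$ actually emerges, and it requires a careful case distinction (recall from \S\ref{dimension-gamma} that $\Xi$ is either a point or a Cantor set of dimension $d/2$, depending on $\theta$). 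Without that count made precise, the convergence criterion for your ``second geometric series'' remains heuristic.
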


\begin{remark}
  \label{sec:trace-theorem-gamma}
The space of the traces on $\Gamma$ of  functions in $W^{1,p}(\Omegai)$,  $1<p<\infty$ 
was characterized in \cite{YANT2010}, whether $r< r^\star_\theta$ or $r=r^\star_\theta$, as the space  
 ${ \rm{JLip}}(1-\frac{2-d} p,p,p;0; \Gamma)$, which was first introduced in \cite{MR2032227}.
Of course, if  $r< r^\star_\theta$, then   ${\rm{JLip}}(1-\frac{2-d} p,p,p;0; \Gamma)$ coincides with $W^{1-\frac{2-d} p,p}(\Gamma)$. 
An easy consequence of this characterization is that the space of the traces on $\Gamma$ of functions in $W^{1,p}(\Omegai)$ is relatively compact in $L^p_\mu (\Gamma)$.
\end{remark}


\noindent
Proposition \ref{convergence-result} below will be useful in the proofs of the main theorems of this paper.

\begin{proposition}\label{convergence-result}
For every $u\in H^1(\Omegai)$, 
	$$\frac 1 {|\Gamma^n|} \int_{\Gamma^n} {u_{|\Gamma^n}}^2\dx x \underset{n\to \infty}{\longrightarrow} 
		\int_\Gamma {u_{|\Gamma}}^2\dx \mu.$$
\end{proposition}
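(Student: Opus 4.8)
The plan is to exploit the self-similar structure on both sides, so as to reduce the claim to a comparison of mean values over the $2^n$ pieces of generation $n$. Since $\Gamma^n=\bigcup_{\sigma\in\cA_n}f_\sigma(\Gamma^0)$ is an essentially disjoint union of segments of length $r^n|\Gamma^0|$, one has $|\Gamma^n|=2^n r^n|\Gamma^0|$, and the changes of variables $x=f_\sigma(y)$ give
\[
\frac1{|\Gamma^n|}\int_{\Gamma^n}u^2\dx x=\frac1{2^n}\sum_{\sigma\in\cA_n}\langle (u\circ f_\sigma)^2\rangle_{\Gamma^0},
\]
where $\langle\cdot\rangle_{\Gamma^0}$ is the arc-length mean on $\Gamma^0$. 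In the same way, the self-similarity \eqref{SEP:eq:18} of $\mu$ (so that $\mu(f_\sigma(\Gamma))=2^{-n}$ and $\int_{f_\sigma(\Gamma)}g\dx\mu=2^{-n}\int_\Gamma g\circ f_\sigma\dx\mu$) yields
\[
\int_\Gamma u^2\dx\mu=\frac1{2^n}\sum_{\sigma\in\cA_n}\langle (u\circ f_\sigma)^2\rangle_{\Gamma},
\]
the mean $\langle\cdot\rangle_\Gamma$ being taken with respect to $\mu$. Here $u_{|\Gamma}\in L^2_\mu(\Gamma)$ by Theorem \ref{trace-thm} (when $r=r^\star_\theta$ one uses the second alternative, which still embeds the trace into $L^2_\mu(\Gamma)=W^{0,2}(\Gamma)$).

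Next I would reduce the segment side to its mean values. Write $\bar u_\sigma=\langle u\circ f_\sigma\rangle_{\Gamma^0}$ and $\tilde u_\sigma=\langle u\circ f_\sigma\rangle_{\Gamma}$. Splitting each average of a square as the square of the average plus the variance, the first display becomes $\frac1{2^n}\sum_\sigma\bar u_\sigma^2$ plus the oscillation term $\frac1{2^n}\sum_\sigma\langle(u\circ f_\sigma-\bar u_\sigma)^2\rangle_{\Gamma^0}$. A Poincaré trace inequality on the fixed cell, $\int_{\Gamma^0}(w-\langle w\rangle_{\Gamma^0})^2\dx x\leqs C\int_{Y^0}|\nabla w|^2\dx x$ for $w\in H^1(Y^0)$, applied to $w=u\circ f_\sigma$, together with the scaling identity $\int_{Y^0}|\nabla(u\circ f_\sigma)|^2\dx x=\int_{f_\sigma(Y^0)}|\nabla u|^2\dx x$ and the disjointness of the cells $f_\sigma(Y^0)$ (Assumption \ref{assumption1}), bounds this oscillation term by $\frac{C}{2^n}\|\nabla u\|_{L^2(\Omegai)}^2$, which tends to $0$. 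Hence the left-hand side equals $\frac1{2^n}\sum_\sigma\bar u_\sigma^2+o(1)$.

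For the right-hand side I would use a martingale argument. The partitions $\{f_\sigma(\Gamma)\}_{\sigma\in\cA_n}$ are increasing (each cell splits into two) and, their diameters being $O(r^n)$, the generated $\sigma$-algebras $\cF_n$ exhaust the Borel $\sigma$-algebra of $\Gamma$ up to $\mu$-null sets (the multiple points have $\mu$-measure zero). Since $\sum_\sigma\tilde u_\sigma\one_{f_\sigma(\Gamma)}$ is the conditional expectation $\mathbb E[u_{|\Gamma}\mid\cF_n]$, the $L^2$ martingale convergence theorem gives $\frac1{2^n}\sum_\sigma\tilde u_\sigma^2=\|\mathbb E[u_{|\Gamma}\mid\cF_n]\|_{L^2_\mu}^2\to\|u_{|\Gamma}\|_{L^2_\mu}^2=\int_\Gamma u^2\dx\mu$. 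Equivalently, one may argue by the density of $C(\Gamma)$ in $L^2_\mu(\Gamma)$ and the fact that conditional expectation is a contraction on $L^2_\mu(\Gamma)$, using the compactness noted in Remark \ref{sec:trace-theorem-gamma}.

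It then remains to prove $\frac1{2^n}\sum_\sigma(\bar u_\sigma^2-\tilde u_\sigma^2)\to0$, which I expect to be the crux. The key is a scale-invariant estimate on the fixed domain $\Omegai$ comparing the two trace means: the seminorm $v\mapsto|\langle v\rangle_{\Gamma^0}-\langle v\rangle_{\Gamma}|$ is continuous on $H^1(\Omegai)$ (both functionals being continuous by the trace results) and vanishes on constants, so the Poincaré inequality on $\Omegai$ from \cite{ach-tch-2007-1} yields, by a routine contradiction argument, $|\langle v\rangle_{\Gamma^0}-\langle v\rangle_\Gamma|\leqs C\|\nabla v\|_{L^2(\Omegai)}$. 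Taking $v=u\circ f_\sigma$ and scaling gives $|\bar u_\sigma-\tilde u_\sigma|^2\leqs C\int_{f_\sigma(\Omegai)}|\nabla u|^2\dx x$. Writing $\bar u_\sigma^2-\tilde u_\sigma^2=(\bar u_\sigma-\tilde u_\sigma)(\bar u_\sigma+\tilde u_\sigma)$, using $|\bar u_\sigma+\tilde u_\sigma|\leqs|\bar u_\sigma-\tilde u_\sigma|+2|\tilde u_\sigma|$, the Cauchy--Schwarz inequality, the disjointness bound $\sum_\sigma\int_{f_\sigma(\Omegai)}|\nabla u|^2\leqs\|\nabla u\|_{L^2(\Omegai)}^2$ (the $f_\sigma(\Omegai)$ being subdomains of $\Omegai$ with pairwise disjoint interiors by Assumption \ref{assumption1}) and the already-proved bound on $\frac1{2^n}\sum_\sigma\tilde u_\sigma^2$, every term carries a factor $2^{-n}$ or $2^{-n/2}$ and tends to $0$. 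Chaining the three steps gives $\frac1{|\Gamma^n|}\int_{\Gamma^n}u^2=\frac1{2^n}\sum_\sigma\bar u_\sigma^2+o(1)=\frac1{2^n}\sum_\sigma\tilde u_\sigma^2+o(1)=\int_\Gamma u^2\dx\mu+o(1)$. The main obstacle is precisely this comparison step, where the two genuinely different measures --- arc length on the prefractal segments and the self-similar measure $\mu$ on $\Gamma$ --- must be reconciled; establishing the scale-invariant Poincaré estimate for the two trace averages on the fixed domain is what makes it work.
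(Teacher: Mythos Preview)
Your argument is correct. The overall architecture matches the paper's: both proofs reduce the claim to
\[
\frac{1}{2^n}\sum_{\sigma\in\cA_n}\bar u_\sigma^{\,2}\longrightarrow\int_\Gamma u_{|\Gamma}^2\dx\mu,\qquad \bar u_\sigma=\langle u\rangle_{\Gamma^\sigma},
\]
and kill the remaining oscillation term $\frac{1}{2^n}\sum_\sigma\langle(u\circ f_\sigma-\bar u_\sigma)^2\rangle_{\Gamma^0}$ by the rescaled Poincar\'e trace inequality on the reference cell, exactly as you do in your Step~3 (the paper uses $f_\sigma(\Omegai)$ instead of $f_\sigma(Y^0)$, which makes no difference).

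The genuine difference is in how the displayed convergence is obtained. The paper simply invokes Proposition~1 of \cite{ach-tch-2007-1}, which states directly that the piecewise constant function $\sum_{\sigma\in\cA_n}\bar u_\sigma\,\one_{f_\sigma(\Gamma)}$ converges to $u_{|\Gamma}$ in $L^2_\mu(\Gamma)$; squaring the $L^2_\mu$-norm gives the limit at once. You instead insert the $\mu$-averages $\tilde u_\sigma=\langle u\circ f_\sigma\rangle_\Gamma$, prove $\frac{1}{2^n}\sum_\sigma\tilde u_\sigma^{\,2}\to\int_\Gamma u^2\dx\mu$ by the $L^2$ martingale convergence theorem for the dyadic filtration on $(\Gamma,\mu)$, and then close the gap $\bar u_\sigma-\tilde u_\sigma$ via the scale-invariant estimate $|\langle v\rangle_{\Gamma^0}-\langle v\rangle_\Gamma|\leqs C\|\nabla v\|_{L^2(\Omegai)}$. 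In effect your Steps~4--5 constitute a self-contained proof of the cited proposition. What you gain is independence from the external reference and a transparent probabilistic explanation; what the paper gains is brevity, since it needs only the single oscillation estimate after the citation.
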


\begin{proof}
The present proof relies on  Proposition 1 in \cite{ach-tch-2007-1}, which states that
 for any $u\in H^1(\Omegai)$, the sequence of piecewise constant functions $(\tilde u_n)_{n\in \N}$ defined on $\Gamma$: 
\begin{displaymath}
\tilde u_n   =\sum_{\sigma\in \cA_n}   \left\langle u \right\rangle_{\Gamma^\sigma}    \one_{f_\sigma( \Gamma) } ,
\end{displaymath}
where $  \left\langle u \right\rangle_{\Gamma^\sigma}= \frac {1}  {|\Gamma^\sigma|} \int_{\Gamma^\sigma} u_{|\Gamma^\sigma}(z) \dx z$,
is such that $\lim_{n\to \infty} \|\tilde u_n -u_{|\Gamma} \|_{L_\mu ^2(\Gamma)}=0$. Note also that 
$
  \int_{\Gamma}  \tilde u_n ^2 d\mu= \sum_{\sigma\in \cA_n}   \frac {| \Gamma^\sigma|}{|\Gamma^n|} 
 \left\langle u \right\rangle_{\Gamma^\sigma} ^2 $.
 Hence, in order to prove Proposition \ref{convergence-result}, it is enough to  prove that\\
$  \left|   \frac 1 {|\Gamma^n|} \int_{\Gamma^n} {u_{|\Gamma^n}}^2\dx x -  \int_{\Gamma}  \tilde u_n ^2 d\mu\right| \underset{n\to \infty}{\longrightarrow}0$, or in an equivalent manner, that
  $  S_n \underset{n\to \infty}{\longrightarrow}0$, where 
\begin{displaymath}
  S_n=     \frac 1 {|\Gamma^n|} \sum_{\sigma\in \cA_n} \left( \int_{\Gamma^\sigma }   u^2_{|\Gamma^\sigma}(z) \dx z -   | \Gamma^\sigma|
 \left\langle u \right\rangle_{\Gamma^\sigma} ^2 \right)  =      \frac 1 {|\Gamma^n|} \sum_{\sigma\in \cA_n} \int_{\Gamma^\sigma} \left[ u_{|\Gamma^\sigma}(z)- \left\langle u \right\rangle_{\Gamma^\sigma}  \right ]^2 \dx z.    
\end{displaymath}
 From a standard trace result on  $\Gamma^0$ and appropriate rescalings, we know that for a positive constant independent of $n$, $\sigma\in \cA_n$ and $u\in H^1 (\Omegai)$,  
   $\int_{\Gamma^\sigma} \left[ u_{|\Gamma^\sigma}(z)- \left\langle u \right\rangle_{\Gamma^\sigma}  \right ]^2 \dx z \le C |\Gamma^\sigma| \int_{f_\sigma( \Omegai)} |\nabla u|^2$.
 Hence, $ S_n\le  C  \sum_{\sigma\in \cA_n}  \frac {|\Gamma^\sigma|}{|\Gamma^n|}  \int_{f_\sigma( \Omegai)} |\nabla u|^2= 
 \frac C{2^n} \sum_{\sigma\in \cA_n} \int_{f_\sigma( \Omegai)}   |\nabla u|^2$, which implies that $S_n \underset{n\to \infty}{\longrightarrow}0$.
\end{proof}

\noindent
We also recall the following refined trace inequality, we refer to \cite{YANT2010} for the proof.
\begin{theorem}\label{SPI}
[see \cite{YANT2010}, Th. 11]
Assume that $\rat\geqs 1/2$, then for all real number $\kappa \in (2\rat^2,1)$, there exists a constant $C$ such that for all $v\in H^1(\Omegai)$ with $v_{|\Gamma^0}=0$,
\begin{equation}
{\Vert v_{|\Gamma}\Vert}_{L^2_\mu(\Gamma)}^2 ~\leqs ~ C \sum_{m\geqs 0} \kappa^m \sum_{\tau \in \cA_n} {\Vert \nabla v\Vert}^2_{L^2(f_\tau(Y^0))}.
\end{equation}
\end{theorem}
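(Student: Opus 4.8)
The plan is to exploit the self-similar, multiscale structure of $\Gamma$ by writing the trace $v_{|\Gamma}$ as a telescoping series of piecewise-constant approximants and estimating each increment by the Dirichlet energy on a single generation of cells. I would reuse the approximants $\tilde v_n = \sum_{\sigma\in\cA_n}\langle v\rangle_{\Gamma^\sigma}\one_{f_\sigma(\Gamma)}$ already introduced in the proof of Proposition \ref{convergence-result}, together with the cited fact that $\tilde v_n \to v_{|\Gamma}$ in $L^2_\mu(\Gamma)$ for every $v\in H^1(\Omegai)$. The hypothesis $v_{|\Gamma^0}=0$ forces $\langle v\rangle_{\Gamma^0}=0$, hence $\tilde v_0 = 0$, so that $v_{|\Gamma} = \sum_{n\geqs 0}(\tilde v_{n+1}-\tilde v_n)$ in $L^2_\mu(\Gamma)$.

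The key step is a scale-invariant Poincar\'e--trace estimate on the reference cell. Since $\Gamma^0$ and its two children $f_1(\Gamma^0), f_2(\Gamma^0)$ are edges of the fixed bounded Lipschitz domain $Y^0$, combining the trace embedding $H^1(Y^0)\hookrightarrow L^2(\partial Y^0)$ with the Poincar\'e inequality modulo constants yields a constant $C>0$ depending only on $Y^0$ such that $|\langle v\rangle_{\Gamma^0}-\langle v\rangle_{f_i(\Gamma^0)}|^2 \leqs C\int_{Y^0}|\nabla v|^2$ for $i=1,2$. Pulling this back through the similitude $f_\sigma$ and using that the Dirichlet integral is invariant under the planar scaling $f_\sigma$ (the area factor $\rat^{2n}$ cancels the gradient factor $\rat^{-2n}$ in dimension two), I obtain, for every $\sigma\in\cA_n$ and $i\in\{1,2\}$, the estimate $|\langle v\rangle_{\Gamma^\sigma}-\langle v\rangle_{\Gamma^{\sigma i}}|^2 \leqs C\int_{f_\sigma(Y^0)}|\nabla v|^2$ with the \emph{same} constant $C$.

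Since $\mu(f_{\sigma i}(\Gamma)) = 2^{-(n+1)}$ for $\sigma\in\cA_n$ and $\mu(\Xi)=0$, the increment at scale $n$ is piecewise constant and satisfies $\|\tilde v_{n+1}-\tilde v_n\|_{L^2_\mu}^2 = 2^{-(n+1)}\sum_{\sigma\in\cA_n}\sum_{i=1,2}|\langle v\rangle_{\Gamma^{\sigma i}}-\langle v\rangle_{\Gamma^\sigma}|^2 \leqs C\,2^{-n}E_n$, where I set $E_n := \sum_{\sigma\in\cA_n}\|\nabla v\|^2_{L^2(f_\sigma(Y^0))}$. Writing $\delta_n := \|\tilde v_{n+1}-\tilde v_n\|_{L^2_\mu}$ and applying the triangle inequality followed by the weighted Cauchy--Schwarz inequality with weights $(2\kappa)^n$ gives $\|v_{|\Gamma}\|_{L^2_\mu}^2 \leqs \bigl(\sum_n \delta_n\bigr)^2 \leqs \bigl(\sum_n (2\kappa)^{-n}\bigr)\bigl(\sum_n (2\kappa)^n\delta_n^2\bigr) \leqs C\bigl(\sum_n (2\kappa)^{-n}\bigr)\sum_n \kappa^n E_n$.

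Here the hypotheses enter exactly: since $\rat\geqs 1/2$ one has $2\rat^2\geqs 1/2$, so the assumption $\kappa>2\rat^2$ forces $2\kappa>1$ and the geometric factor $\sum_n (2\kappa)^{-n}$ is a finite constant. Recognizing $\sum_n \kappa^n E_n = \sum_{m\geqs 0}\kappa^m\sum_{\sigma\in\cA_m}\|\nabla v\|^2_{L^2(f_\sigma(Y^0))}$ as the claimed right-hand side then finishes the proof. I expect the main obstacle to be the careful justification of the scale-invariant cell estimate, in particular verifying that $C$ is genuinely independent of $\sigma$ and $n$ (which hinges on the two-dimensional scale-invariance of the Dirichlet integral), and that the open set condition in Assumption \ref{assumption1} guarantees the cells $f_\sigma(Y^0)$ have pairwise disjoint interiors, so that the right-hand side is controlled by $\|\nabla v\|^2_{L^2(\Omegai)}$ and is therefore finite.
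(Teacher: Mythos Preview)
The paper does not contain a proof of this statement: Theorem~\ref{SPI} is quoted verbatim from \cite{YANT2010} (Theorem~11) and used as a black box, so there is no in-paper argument to compare against. That said, your proposal is a correct and self-contained proof. The telescoping of the approximants $\tilde v_n$ (borrowed from Proposition~\ref{convergence-result}), the scale-invariant cell estimate $|\langle v\rangle_{\Gamma^\sigma}-\langle v\rangle_{\Gamma^{\sigma i}}|^2 \leqs C\int_{f_\sigma(Y^0)}|\nabla v|^2$ coming from the two-dimensional conformal invariance of the Dirichlet integral, and the weighted Cauchy--Schwarz step are all sound; the justification that $C$ is independent of $\sigma$ via the scaling argument is exactly the point that makes this work.

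One observation: your argument actually proves the inequality for every $\kappa>1/2$, and you invoke the hypothesis $\rat\geqs 1/2$ only to verify that the stated range $(2\rat^2,1)$ is contained in $(1/2,1)$. So you are in fact establishing a slightly sharper result than what is asserted. Also, you silently (and correctly) fixed the evident misprint in the statement: the inner sum should be over $\tau\in\cA_m$, not $\cA_n$.
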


\subsection{Extension results} 

\subsubsection{The subcritical case $\rat <\rat^\star_\theta$}\label{ext-subcritical}
As seen in \S~\ref{Introduction}, it was proved in \cite{YANT2008} that if $\rat <\rat^\star_\theta$, then $\Omegai$ is an $(\eps,\delta)$-domain (see \cite{MR631089} for a definition), or in an equivalent manner, a quasi-disk (see \cite{MR817985}). Hence, the extension result of Jones  and Vodop'janov {\textit{et al.}} applies, and $\Omegai$ is a Sobolev extension domain (see \cite{MR631089}), \textit{i.e.} $\Omegai$ has the $W^{1,p}$-extension property for every $p\in [1,\infty]$: there exists a continuous linear operator $\cE$ from $W^{1,p}(\Omegai)$ to $W^{1,p}(\R^2)$ such that 
    \begin{equation}\label{ext-op}
      \cE(u)_{|\Omegai}=u, \quad \forall u\in W^{1,p}(\Omegai).
    \end{equation}
Similarly, the set $\Omegae = D \setminus \overline \Omega_\text{int}$ is an $(\eps,\delta)$-domain, and thus a Sobolev extension domain.

\subsubsection{The critical case $\rat =\rat^\star_\theta$}
When $\rat =\rat^\star_\theta$, it is easily seen that $\Omegai$ is not an $(\eps,\delta)$-domain, and the extension results of Jones and Vodop'janov {\textit{et al.}} do not apply. In fact, if $p\in (1,\infty)$, it is easy to construct a function $u\in W^{1,p}(\Omegai)$ such that $u\equiv 1$ in $f_1(\Omegai)$ and $u\equiv -1$ in $f_2(\Omegai)$. If $p>2$, $u$ cannot be extended to a function belonging to $W^{1,p}(\R^2)$ because the existence of such an extension  would contradict the  Sobolev imbedding of $W^{1,p}(\R^2)$ in $C(\R^2)$.
\\
In the case when $\rat=\rat^\star_\theta$, the situation depends in fact on the Hausdorff dimension of the set $\Xi=f_1(\Gamma)\cap f_2(\Gamma)$. The following extension theorem holds.
\begin{theorem}\label{ext-theorem}  see \cite{SEP, comparison}
\label{extension}
  Set $p^\star= 2-\dim_H \Xi$ (recall that $\Xi$ is defined in \eqref{Xi}).
  \begin{enumerate}
  \item If $p\in (1,p^\star)$, then
$\Omegai$ has the $W^{1,p}$-extension property.
\item If $p>p^\star$, then $\Omegai$ does not have the $W^{1,p}$-extension property.
  \end{enumerate}
\end{theorem}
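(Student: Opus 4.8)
Both statements lie on opposite sides of a single capacitary threshold, so I would organise the whole proof around the behaviour of the variational $p$-capacity of the contact set. Recall (see \cite{MR0435361,MR1404091}) that a function of $W^{1,p}(\R^2)$ admits a representative defined up to a set of $p$-capacity zero, namely at every Lebesgue point, and that for a compact $E\subset\R^2$ one has $\mathrm{cap}_p(E)=0$ when $\dim_H E<2-p$ and $\mathrm{cap}_p(E)>0$ when $\dim_H E>2-p$. Since by \S\ref{dimension-gamma} the set $\Xi$ has Hausdorff dimension $0$ or $d/2$, and since the full set of multiple points $M:=\bigcup_{\sigma\in\cA}f_\sigma(\Xi)$ is a countable union of similar copies of $\Xi$ and hence has the same dimension, the exponent $p^\star=2-\dim_H\Xi$ is exactly the value at which $\mathrm{cap}_p(\Xi)$ (and $\mathrm{cap}_p(M)$, by countable subadditivity) jumps from $0$ to a positive value.

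\emph{Negative direction.} For $p>p^\star$ I would argue by contradiction. Using that $f_1(\Omegai)$ and $f_2(\Omegai)$ are disjoint open sets whose closures meet only along $\Xi$, define $u$ equal to $+1$ on $f_1(\Omegai)$, to $-1$ on $f_2(\Omegai)$, and to a fixed smooth interpolation on the base cell $Y^0$; since $u$ is locally constant on the two subtrees, $u\in W^{1,p}(\Omegai)$ for every $p$. If $U\in W^{1,p}(\R^2)$ were an extension, I would invoke the interior measure-density (corkscrew/John) property of these ramified domains: at every $x_0\in\Xi$ both $f_1(\Omegai)$ and $f_2(\Omegai)$ have positive lower density, so the averages of $U$ over $B(x_0,\rho)$ keep a value bounded away from both $+1$ and $-1$, and $x_0$ is not a Lebesgue point of $U$. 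Hence $\Xi$ is contained in the set of non-Lebesgue points of $U$, which has $p$-capacity zero; this contradicts $\mathrm{cap}_p(\Xi)>0$.

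\emph{Positive direction.} For $p<p^\star$ the construction of $\cE$ is the substantial part. The guiding principle is that $\Omegai$ fails to be an $(\eps,\delta)$-domain only at the multiple points $M$: away from $M$ its boundary is locally that of a quasi-disk, so the Whitney-cube extension construction of Jones produces, for each exterior Whitney cube not approaching $M$, a bounded contribution, with constants that are uniform under the similarities $f_\sigma$ by homogeneity of the $W^{1,p}$ norm. The plan is therefore to build $\cE$ self-similarly from a fixed extension on a Lipschitz neighbourhood of $Y^0$, transported by the $f_\sigma$, and to repair the construction near each contact $f_\sigma(\Xi)$. There, since $\mathrm{cap}_p(M)=0$, I can insert cut-off functions $\eta_\sigma$ equal to $1$ near $f_\sigma(\Xi)$ with arbitrarily small $W^{1,p}$-energy; multiplying by $1-\eta_\sigma$ decouples the two branches meeting at $f_\sigma(\Xi)$, reduces the local problem to two independent quasi-disk extensions, and glues them.

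The main obstacle is the quantitative control of this repair, uniformly across scales, and its summability. The delicate term is the $\int (U_1-U_2)^p\,|\nabla\eta_\sigma|^p$ produced by the cut-off, which is \emph{not} controlled by the mere smallness of $\int|\nabla\eta_\sigma|^p$; I would bound it by a scale-invariant Hardy/removability inequality near $f_\sigma(\Xi)$, valid precisely because $\mathrm{cap}_p(\Xi)=0$. Summing the resulting estimates over generation $n$ amounts to controlling a series in which the branching factor $2^n$ competes with the capacitary gain at scale $\rat^n$; using the self-similar identity $2\rat^{d}=1$ (equivalently the definition $d=-\log2/\log\rat$) together with the value $\dim_H\Xi\in\{0,d/2\}$, this series converges if and only if $p<p^\star$, exactly the admissible range. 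Establishing the scale-uniform Hardy inequality at the contact set and carrying out the summation are, in my view, where the real work lies; the remainder is bookkeeping of the Whitney and partition-of-unity construction away from $M$.
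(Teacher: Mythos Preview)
Your treatment of the two parts differs in quality, so let me separate them.

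\medskip

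\textbf{Part 2 (non-extension for $p>p^\star$).} Your capacity argument is essentially correct, but it is \emph{not} the route taken in the paper. The paper deduces Part~2 from the trace theorem on $\Gamma$ (Theorem~\ref{trace-thm}): for $p>p^\star$ there exist $u\in W^{1,p}(\Omegai)$ whose trace $u_{|\Gamma}$ does not lie in $W^{1-\frac{2-d}{p},p}(\Gamma)$, whereas the trace on $\Gamma$ of any function in $W^{1,p}(\R^2)$ does; since the two notions of trace coincide $\mu$-a.e., an extension is impossible. Your argument is more elementary and self-contained: you exhibit the explicit obstruction $u=\pm1$ on $f_i(\Omegai)$ and show directly, via the interior measure-density of each subtree at $\Xi$ and the fact that non-Lebesgue points of a $W^{1,p}(\R^2)$ function form a $p$-null set, that no extension can exist once $\mathrm{cap}_p(\Xi)>0$. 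One remark: to make your Lebesgue-point step airtight you should use the \emph{strong} Lebesgue-point property $\fint_{B(x_0,\rho)}|U-c|\to 0$ (which indeed holds $p$-quasi-everywhere for Sobolev functions), not merely convergence of the averages --- the latter could in principle stabilise at some intermediate value. With that adjustment your argument is complete. The paper's trace-space route, by contrast, packages the obstruction at the level of function spaces on $\Gamma$ rather than at individual points of $\Xi$.

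\medskip

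\textbf{Part 1 (extension for $p<p^\star$).} Here the paper gives no argument and simply cites \cite{SEP}. Your sketch (self-similar Jones-type extension away from the multiple points, repaired near each $f_\sigma(\Xi)$ by capacity cut-offs) is a plausible strategy, and you correctly identify the crux: controlling $\int|U_1-U_2|^p|\nabla\eta_\sigma|^p$ uniformly in scale and summing over generations. But this is precisely where you stop. The ``scale-invariant Hardy/removability inequality near $f_\sigma(\Xi)$'' that you invoke is not a standard off-the-shelf result for sets of this kind; it needs a genuine proof adapted to the geometry of $\Xi$ inside $\Omegai$, and the competition between the branching factor $2^n$ and the capacitary gain has to be made quantitative, not asserted. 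As written, Part~1 is a programme rather than a proof: the outline is reasonable, but the decisive estimate is missing.
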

\noindent
Point 1 in Theorem \ref{ext-theorem} was obtained in \cite{SEP}. Point 2 is a consequence of \cite{ADT} and \cite{comparison}: 
by Theorem \ref{trace-thm}, if $p>p^\star$, then ${W^{1,p}(\Omegai)}_{|\Gamma}\not\subset W^{1-\frac{2-d}p,p}(\Gamma) = {W^{1,p}(\R^2)}_{|\Gamma}$. This is in contradiction with the existence of a continuous extension operator from $W^{1,p}(\Omegai)$ to $W^{1,p}(\R^2)$ (see \cite{ADT} for the proof that the notions of traces coincide $\mu$-almost everywhere on $\Gamma$).

\begin{remark}\label{2-geometries}
As it was seen in \S \ref{dimension-gamma}, only two situations can occur, depending on the geometry of $\Omegai$:
\begin{list}{--}{\itemsep0mm \topsep1mm}
\item if $\theta$ is not of the form $\pi/(2k)$ for any integer $k$, then $\dim_H \Xi=0$ and $p^\star=2$,
\item if $\theta$ is of the form $\pi/(2k)$ for an integer $k$, then $\dim_H \Xi=(\dim_H \Gamma)/2$, and $p^\star=2- (\dim_H \Gamma)/2$.
\end{list}
\end{remark}

\begin{remark}\label{p=pstar}
The special case $p=p^\star$ is not dealt with in Theorem \ref{extension}. The latter is of particular importance in case 1 of Remark \ref{2-geometries} above,
 since the case $p=p^\star=2$ corresponds to the question of the $H^1$-extension property.
\\
In fact, it was proved by Koskela in \cite{MR1658090} that if a domain in $\R^n$ has the $W^{1,n}$-extension property, then it must have the $W^{1,p}$-extension property for every $p\geqs n$. Hence, a consequence of Theorem \ref{ext-theorem} is that $\Omegai$ cannot have the $W^{1,p}$-extension property when $p=2$. In particular, the domains that we will consider in Section \ref{r=rstar} fail to satisfy the $H^1$-extension property.
\\
To the best of our knowledge,  the question of the extension property for $p=p^\star$ in case 2 of Remark \ref{2-geometries} seems to be open.
\end{remark}

\section{The transmission problem in the case $r<r^\star_\theta$}\label{sec-transmission-pb}
\subsection{The transmission problem with fractal interface}\label{sec-pb-transmission}
The transmission problem  can be formally stated as
\begin{equation}\label{prob-transmission}
\left\{ \begin{array}{ll}
	-\Delta u = f & \mathrm{in}~ \Omegai\cup\Omegae,\\[1mm]
	[u]=0 &\text{on}~ \Gamma,\\[1mm]
	[\partial_n u]=\alpha u &\text{on}~ \Gamma,\\[1mm]
	\partial_n \ui =\partial_n \ue = 0 & \mathrm{on}~ \Sigma,\\[1mm]
	\partial_n \ue = 0,~~ \ui = u_0 & \mathrm{on}~ \Gamma^0,\\[1mm]
	\partial_n \ue = 0 & \mathrm{on}~ \partial D,
\end{array} \right.
\end{equation}
where $\alpha>0$, $\Sigma = \partial \Omegai\setminus (\Gamma\cup \Gamma^0)$, $[u]$ (resp. $[\partial_n u]$) denotes the jump of $u$ (resp. of the normal derivative of $u$) across $\Gamma$, $f\in L^2(D)$ and $u_0\in H^{1/2}(\Gamma^0)$. We also use the notations $\ui := u_{|\Omegai}$ and $\ue := u_{|\Omegae}$.
\\[1mm]
The transmission condition $[\partial_n u]=\alpha u$ on $\Gamma$ has no real meaning, since the normal is not defined on $\Gamma$. The rigorous meaning of \eqref{prob-transmission} is the following variational formulation:
\begin{equation}\tag{$P$}\label{prob-fv}
\begin{array}{c}
\text{find $u\in V$ such that for all $v\in V_0$,}\\[2mm]
	\displaystyle a(u,v) ~=~ \int_D f v \dx x,
\end{array}
\end{equation}
where $V$ is the affine space defined by
\begin{equation}\label{space-V}
V ~=~ \{u\in L^2(D),~ \ui\in H^1(\Omegai),~  \ue\in H^1(\Omegae),~ {\ui}_{|\Gamma^0}=u_0,~{\ui}_{|\Gamma} = {\ue}_{|\Gamma}\}.
\end{equation}
Recall that $v\in H^1(\Omegai) \mapsto v_{|\Gamma} \in H^{1/2}(\Gamma)$ and $v\in H^1(\Omegae) \mapsto v_{|\Gamma} \in H^{1/2}(\Gamma)$ are continuous maps, hence $V$ is closed. 
Note that if $u\in H^1(D)$ and $u_{|\Gamma^0}=u_0$ then $u\in V$.
\\
The vector space $V_0$ is defined as $V$, except that the condition ${\ui}_{|\Gamma^0}=u_0$ is replaced by ${\ui}_{|\Gamma^0}=0$. Finally,
\begin{equation}\label{form}
a(u,v) ~=~ \int_{\Omegai} \nabla \ui \cdot \nabla \vi  \dx x + \int_{\Omegae} \nabla \ue \cdot \nabla \ve \dx x + \alpha \int_\Gamma u_{|\Gamma} v_{|\Gamma} \dx \mu.
\end{equation}

\begin{remark}
The traces in the condition ${\ui}_{|\Gamma} = {\ue}_{|\Gamma}$ in \eqref{space-V} are meant in the sense of Definition \ref{def-trace}. The above definition of the space $V$ is suitable when $r<r^\star_\theta$ and in the special case when $r=r^\star_\theta$ and $\theta=0$ discussed in Section \ref{r=rstar}. In the other cases, the transmission condition has to be considerably changed, see Remark \ref{rem-theta>0}.
\end{remark}

\noindent
Note that the space $V_0$, equipped with the norm $a(u,u)^{1/2}$ is a Hilbert space.
From the Lax-Milgram theorem, we see that for every function $f$ given in $L^2(D)$, there exists a unique weak solution $u\in V$ to ($P$). 
Moreover, $u$ minimizes  the  functional
\begin{equation}\label{min-u}
v\in V\mapsto a(v,v) - 2\int_D f v \dx x.
\end{equation}

\subsection{The transmission problem with prefractal interface}
For any positive integer $n$, let us consider  the similar transmission problem in which  the interior domain has been truncated by stopping the construction at step $n$.
 This class of problems is much more standard since the interface $\Gamma^n$ consists of $2^n$ pairwise disjoint line segments.
The boundary value problem reads:
\begin{equation}\label{prob-transmission-n}
\left\{ \begin{array}{ll}
	-\Delta u = f & \mathrm{in}~ \Omegai^n\cup\Omegae^n,\\[1mm]
	[u]=0  &\text{on } \Gamma^n,\\[1mm]
	[\partial_n u]= \frac \alpha {|\Gamma^n|} u &\text{on } \Gamma^n,\\[1mm]
	\partial_n \ui^n =\partial_n \ue^n = 0 & \mathrm{on}~ \Sigma^n,\\[1mm]
	\partial_n \ue^n = 0,~~ \ui^n = u_0 & \mathrm{on}~ \Gamma^0,\\[1mm]
	\partial_n \ue^n = 0 & \mathrm{on}~ \partial D,
\end{array} \right.
\end{equation}
where $\Sigma^n = \partial \Omegai^n\setminus (\Gamma^n\cup \Gamma^0)$, and $[u]$ (resp. $[\partial_n u]$) denotes the jump of $u$ (resp. of the normal derivative of $u$) across $\Gamma^n$. We also use the notations $\ui^n = u_{|\Omegai^n}$ and $\ue^n = u_{|\Omegae^n}$.
\\[1mm]
The variational formulation of problem \eqref{prob-transmission-n}  can be stated as follows:
\begin{equation}\tag{$P_n$}\label{prob-n-fv}
\begin{array}{c}
	\text{find $u\in V^n$ such that for all $v\in V^n_0$,}\\[2mm]
	\displaystyle a_n(u,v) ~=~ \int_D f v \dx x,
\end{array}
\end{equation}
where $V^n$ is the affine space defined by
\begin{equation}\label{eq:7}
V^n ~=~ \{u\in L^2(D),~ \ui^n\in H^1(\Omegai^n),~ \ue^n\in H^1(\Omegae^n),~ {\ui^n}_{|\Gamma^0}=u_0,~ {\ui^n}_{|\Gamma^n} = {\ue^n}_{|\Gamma^n}\}.
\end{equation}

\begin{remark}
Let $G^n$ be the closure of the set $\partial \Omegai^n  \setminus (\Gamma^0\cup \Gamma^n)$, which is a finite union of polygonal lines. It is easy to see that $V^n$ is the set of the functions in $H^1(D\setminus G^n)$ such that ${\ui}_{|\Gamma^0}=u_0$.
\\
Similarly, if we define $G$ as $\partial \Omegai \setminus (\Gamma^0 \cup \Gamma)$, we observe that $G$ is not closed, since its closure contains $\Gamma$. Observe that, in general, the functions $u\in H^1(D\setminus \overline G)$ do not satisfy ${\ui}_{|\Gamma} = {\ue}_{|\Gamma}$, so $V$ cannot be identified with the set of the functions $u\in H^1(D\setminus \overline G)$ satisfying the Dirichlet boundary condition on $\Gamma^0$. On the other hand, since $D\setminus G$ is not an open set, dealing with $H^1(D\setminus G)$ is not very straightforward.
\end{remark}

Here also, $V^n_0$ is defined as $V^n$, except that the condition ${\ui^n}_{|\Gamma^0}=u_0$ is replaced by ${\ui^n}_{|\Gamma^0}=0$, and $a_n$ is defined by
\begin{equation}\label{form-n}
a_n(u,v) ~=~ \int_{\Omegai^n} \nabla \ui^n \cdot \nabla \vi^n ~ \dx x + \int_{\Omegae^n} \nabla \ue \cdot \nabla \ve^n \dx x + \frac \alpha {|\Gamma^n |}\int_{\Gamma^n} u_{|\Gamma^n} v_{|\Gamma^n} \dx x.
\end{equation}
The space $V^n_0$, equipped with the norm $a_n(u,u)^{1/2}$ is a Hilbert space. We also remark that $V^n\subset V$ with a continuous imbedding.
Again, the Lax-Milgram theorem implies that for every function $f$ given in $L^2(D)$, there exists a unique weak solution $u_n\in V^n$ to that problem, 
and $u_n$ minimizes  the  functional
\begin{equation}\label{min-u_n}
v\in V^n \mapsto a_n(v,v) - 2\int_D f v \dx x.
\end{equation}
\begin{lemma}
  \label{sec:transm-probl-with}
The sequence $u_n$ is bounded in $V$.
\end{lemma}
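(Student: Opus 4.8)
The plan is to derive an energy estimate for $u_n$ that is uniform in $n$, and then to translate it into a bound for the $V$-norm $\|u\|_V^2=\|u\|_{L^2(D)}^2+\|\nabla u\|_{L^2(\Omegai)}^2+\|\nabla u\|_{L^2(\Omegae)}^2$. First I would lift the boundary data: since $u_0\in H^{1/2}(\Gamma^0)$ there is $w\in H^1(D)$ with $w_{|\Gamma^0}=u_0$, and exactly as for $V$ one checks that $w\in V^n$ for every $n$. Writing $u_n=w+z_n$ with $z_n\in V^n_0$ and taking $v=z_n=u_n-w$ as test function in \eqref{prob-n-fv} produces the energy identity
\[
a_n(u_n,u_n)=a_n(u_n,w)+\int_D f\,(u_n-w)\dx x .
\]
Since $a_n$ is symmetric and positive semidefinite, Cauchy--Schwarz gives $a_n(u_n,w)\leqs a_n(u_n,u_n)^{1/2}a_n(w,w)^{1/2}$, and $a_n(w,w)$ is bounded uniformly in $n$: its gradient part equals $\|\nabla w\|_{L^2(D)}^2$, while the interface term $\frac{\alpha}{|\Gamma^n|}\int_{\Gamma^n}w^2\dx x$ converges by Proposition \ref{convergence-result} and hence stays bounded. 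Everything therefore reduces to estimating $\int_D f(u_n-w)\leqs\|f\|_{L^2(D)}\|z_n\|_{L^2(D)}$, that is, to a Poincar\'e inequality on $V^n_0$ uniform in $n$.

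The core of the argument is thus the claim that there is a constant $C$, independent of $n$, such that $\|z\|_{L^2(D)}\leqs C\,a_n(z,z)^{1/2}$ for all $z\in V^n_0$. The key observation is that the $n$-dependence disappears thanks to the monotone inclusions $\Omegai^n\subset\Omegai$ and $\Omegae\subset\Omegae^n$ together with the fact that gradient integrals over $D$ ignore the Lebesgue-null cut sets, so that
\[
\|\nabla z\|_{L^2(\Omegai)}^2+\|\nabla z\|_{L^2(\Omegae)}^2=\int_D|\nabla z|^2\dx x=\|\nabla z\|_{L^2(\Omegai^n)}^2+\|\nabla z\|_{L^2(\Omegae^n)}^2\leqs a_n(z,z).
\]
Using $\|z\|_{L^2(D)}^2=\|z\|_{L^2(\Omegai)}^2+\|z\|_{L^2(\Omegae)}^2$, I would bound the two pieces by \emph{fixed-domain} inequalities: on $\Omegai$, since $z_{|\Gamma^0}=0$, the Poincar\'e inequality of \cite{ach-tch-2007-1} gives $\|z\|_{L^2(\Omegai)}\leqs C\|\nabla z\|_{L^2(\Omegai)}$; on $\Omegae$ a generalized Poincar\'e inequality gives $\|z\|_{L^2(\Omegae)}\leqs C(\|\nabla z\|_{L^2(\Omegae)}+\|z_{|\Gamma}\|_{L^2_\mu(\Gamma)})$. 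The interface trace is the same whether computed from $\Omegai$ or from $\Omegae$ because $z\in V_0$, and it is controlled through the interior side by Theorem \ref{SPI} (the geometric weights $\kappa^m<1$ being summed against the disjoint cells that tile $\Omegai$), giving $\|z_{|\Gamma}\|_{L^2_\mu(\Gamma)}\leqs C\|\nabla z\|_{L^2(\Omegai)}$. Combining these yields the claimed uniform estimate $\|z\|_{L^2(D)}\leqs C\,a_n(z,z)^{1/2}$.

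The main obstacle is the generalized Poincar\'e inequality on the fixed exterior domain $\Omegae$: since $z$ carries no Dirichlet condition there, the gradient alone cannot control constants and one must couple through the interface. I would prove it by the standard compactness (Peetre--Tartar) argument, using that $\Omegae$ is a Sobolev extension domain, so $H^1(\Omegae)\hookrightarrow L^2(\Omegae)$ is compact and the trace map $H^1(\Omegae)\to L^2_\mu(\Gamma)$ is compact (Remark \ref{sec:trace-theorem-gamma}); the functional $z\mapsto\|z_{|\Gamma}\|_{L^2_\mu(\Gamma)}$ is a norm on the constants since $\mu(\Gamma)=1$, which excludes the obstructing kernel. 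With the uniform Poincar\'e inequality in hand, set $E_n:=a_n(u_n,u_n)^{1/2}$ and note $a_n(z_n,z_n)^{1/2}\leqs E_n+a_n(w,w)^{1/2}$; the energy identity and the preceding bounds then give $E_n^2\leqs A\,E_n+B$ with $A,B$ independent of $n$, so $E_n$ is bounded. Consequently $\|\nabla u_n\|_{L^2(\Omegai)}^2+\|\nabla u_n\|_{L^2(\Omegae)}^2\leqs E_n^2$ and $\|u_n\|_{L^2(D)}\leqs\|w\|_{L^2(D)}+\|z_n\|_{L^2(D)}\leqs\|w\|_{L^2(D)}+C\bigl(E_n+a_n(w,w)^{1/2}\bigr)$ are bounded, which shows that $u_n$ is bounded in $V$.
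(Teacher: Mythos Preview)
Your proof is correct and follows the same overall strategy as the paper's: derive a uniform bound on the energy $a_n(u_n,u_n)$ from the variational characterization, combined with a Poincar\'e-type inequality on $V$ that controls $\|\cdot\|_{L^2(D)}$ by the gradient parts plus boundary data.

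There are two presentational differences worth noting. First, rather than lifting to an arbitrary $w\in H^1(D)$ and then invoking Proposition~\ref{convergence-result} to bound the interface term in $a_n(w,w)$, the paper chooses the lift $\tilde u_0$ to have compact support disjoint from every $\Gamma^n$, so that the interface contribution to $a_n(\tilde u_0,\tilde u_0)$ simply vanishes; and instead of testing the variational formulation and bounding $a_n(u_n,w)$ via Cauchy--Schwarz, the paper uses the minimization characterization~\eqref{min-u_n} directly, comparing the energy at $u_n$ with its value at $\tilde u_0$. Second, the Poincar\'e inequality that you assemble piece by piece (Poincar\'e on $\Omegai$ with zero trace on $\Gamma^0$, Theorem~\ref{SPI} for the trace on $\Gamma$, and a Peetre--Tartar argument on $\Omegae$) is exactly the inequality~\eqref{eq:4} that the paper states for all $v\in V$ and uses without proof; your treatment is therefore more explicit on this point, while the paper's choice of lift makes the energy estimate a little cleaner.
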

\begin{proof}
  Let $\tilde u_0$ be a function in $H^1(D)$ such that $\tilde u_{0| \Gamma^0}=u_0$, and such that $\tilde u _0$ is supported in 
a compact set which does not intersect the sets $\Gamma^n$, $\forall  n\ge 1$. 
 It is clear that $\tilde u_0\in V$ and that $\tilde u_0\in V_n$ for all $n\ge 1$.
Let us define $C_0=\int_D   \left(|\nabla \tilde u_0|^2  -2  f \tilde u_0\right)  \dx x$.
Thus, for all $n\ge 1$, 
\begin{equation}
  \label{eq:2}
a_n(u_n,u_n) -2 \int_D f u_n \dx x ~\leqs~  a_n(\tilde u_0,\tilde u_0) -2 \int_D f \tilde u_0 \dx x ~=~ C_0,
\end{equation}
because $\tilde u_{0|\Gamma^n}=0$.
On the other hand, since $V^n\subset V$, 
\begin{equation}
  \label{eq:3}
a_n(u_n,u_n) ~\geqs~ \int_{\Omegai^n}  |\nabla u_n|^2 \dx x   +\int_{\Omegae^n}   |\nabla u_n|^2 \dx x ~=~  
 \int_{\Omegai}   |\nabla u_n|^2 \dx x   +\int_{\Omegae}   |\nabla u_n|^2 \dx x.
\end{equation}
We shall also use the following Poincar{\'e} inequality: there exists a constant $C>0$ such that, for all $v\in V$,
\begin{equation}
  \label{eq:4}
  \|v\|^2 _{L^2 (D)} ~\leqs~ C \left(   \|v_{|\Gamma^0}\|^2_{L^2(\Gamma^0)}+  \int_{\Omegai}  |\nabla v|^2 \dx x   +\int_{\Omegae}   |\nabla v|^2 \dx x\right).
\end{equation}
From (\ref{eq:2}), (\ref{eq:3}) and (\ref{eq:4}), we deduce that
\begin{displaymath}
  \int_{\Omegai}   |\nabla u_n|^2 \dx x   +\int_{\Omegae}   |\nabla u_n|^2 \dx x- 2 \sqrt C  \|f\|_{L^2 (D)}
\left[   \|u_{0|\Gamma^0}\|^2_{L^2(\Gamma^0)}+  \int_{\Omegai}   |\nabla u_n|^2 \dx x   +\int_{\Omegae}   |\nabla u_n|^2 \dx x \right]^{\frac 1 2} \leqs 
C_0,
\end{displaymath}
which implies that the quantity $ \int_{\Omegai}   |\nabla u_n|^2 \dx x   +\int_{\Omegae}   |\nabla u_n|^2 \dx x$ is bounded by a constant independent of $n$. 
Using (\ref{eq:4}) again, this implies that $ \|u_n\| _{L^2 (D)}$ is also  bounded by a constant independent of $n$.
 Combining the previous two observations, we obtain that the sequence $u_n$ is bounded in $V$.
\end{proof}

\subsection{M-convergence of the energy forms in the case $\rat < \rat^\star_\theta$}\label{sec-convergence}
%


We start by extending the definition of the forms $a$ and $a_n$ to the whole space $L^2(D)$ by setting
\begin{eqnarray}
a(u,u) = \infty & \text{if } u\in L^2(D)\setminus V, \label{def-a}\\
a_n(u,u) = \infty & \text{if } u\in L^2(D)\setminus V^n. \label{def-a_n}
\end{eqnarray}

\noindent
We are interested in proving the convergence of the forms $a_n$ to $a$ in the following sense, introduced by Mosco (see \cite{MR1283033}).
\begin{definition}\label{Mosco-convergence}
A sequence of forms ${(a_n)}_{n}$ is said to M-converge to a form $a$ in $L^2(D)$ if
\begin{enumerate}[label=\textit{\roman*}.]
\item\label{point1} for every sequence ${(u_n)}_{n}$ weakly converging  to a function $u$ in $L^2(D)$,
\begin{equation} \label{M-conv1}
	\limi a_n(u_n,u_n) ~\geqs~ a(u,u)~~\mathrm{as}~n\to \infty,
\end{equation}
\item\label{point2} for every $u\in L^2(D)$, there exists a sequence ${(u_n)}_{n}$ converging strongly in $L^2(D)$ such that
\begin{equation} \label{M-conv2}
	\lims a_n(u_n,u_n) ~\leqs~ a(u,u)~~\mathrm{as}~n\to \infty.
\end{equation}
\end{enumerate}
\end{definition}


\begin{theorem}\label{M-cv}
Assume that $\rat < \rat^\star_\theta$, then the energy forms $a_n$ M-converge in $L^2(D)$ to the form $a$.
\end{theorem}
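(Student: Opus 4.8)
The plan is to verify the two Mosco conditions separately, exploiting the fact that in the subcritical regime $\rat<\rat^\star_\theta$ the domain $\Omegai$ is an $(\eps,\delta)$-domain, hence a Sobolev extension domain (see \S\ref{ext-subcritical}). This extension operator $\cE : H^1(\Omegai)\to H^1(\R^2)$ is the main workhorse. I will also use Proposition \ref{convergence-result}, which gives the convergence of the rescaled boundary integrals over $\Gamma^n$ to the integral over $\Gamma$, and the fact (Remark \ref{sec:trace-theorem-gamma}) that the trace space on $\Gamma$ embeds compactly into $L^2_\mu(\Gamma)$.

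For the lower-bound condition \ref{point1}, I would take a sequence $u_n\rhu u$ weakly in $L^2(D)$. If $\liminf a_n(u_n,u_n)=\infty$ there is nothing to prove, so I assume it is finite and pass to a subsequence realizing the liminf along which $a_n(u_n,u_n)$ is bounded. Boundedness of $a_n(u_n,u_n)$ controls $\|\nabla u_n\|_{L^2(\Omegai)}$ and $\|\nabla u_n\|_{L^2(\Omegae)}$ uniformly (as in Lemma \ref{sec:transm-probl-with}), so after extracting a further subsequence the restrictions ${u_n}_{|\Omegai}$ and ${u_n}_{|\Omegae}$ converge weakly in $H^1(\Omegai)$ and $H^1(\Omegae)$ to $\ui,\ue$, whose pieces must glue to the weak $L^2$ limit $u$. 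The weak lower semicontinuity of the Dirichlet integrals on the fixed domains $\Omegai,\Omegae$ handles the first two terms of $a$. The delicate term is the boundary term: I must show $\liminf \frac\alpha{|\Gamma^n|}\int_{\Gamma^n} u_n^2\,\mathrm dx \geqs \alpha\int_\Gamma u_{|\Gamma}^2\,\mathrm d\mu$ and, crucially, that the limit $u$ actually lies in $V$, i.e. the traces of $\ui$ and $\ue$ on $\Gamma$ coincide $\mu$-almost everywhere. Here I would combine Proposition \ref{convergence-result} (applied to the limit) with the compact embedding of the trace spaces to upgrade weak $H^1$ convergence to strong $L^2_\mu(\Gamma)$ convergence of the traces, so that the boundary integrals pass to the limit and the matching condition is inherited from ${u_n}_{|\Gamma^n}^{\mathrm{int}}={u_n}_{|\Gamma^n}^{\mathrm{ext}}$.

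For the recovery condition \ref{point2}, given $u\in L^2(D)$ I may assume $u\in V$ (otherwise $a(u,u)=\infty$ and any strongly convergent sequence works). The idea is to build $u_n\in V^n$ from $u$ by exploiting that $V^n\subset V$ and that $u$ is already $H^1$ on $\Omegai$ and $\Omegae$ separately with matching traces on $\Gamma$. The natural candidate is to extend $\ui$ across $\Gamma$ using the extension operator $\cE$ and then cut back to produce functions that are genuinely $H^1$ across the prefractal interface $\Gamma^n$, adjusting on the thin regions $f_\sigma(K^0)$ for $\sigma\in\cA_n$ that lie beyond level $n$; because these regions shrink geometrically (their total volume is $O(\rat^{2n})$ or similar) the correction costs vanishing energy. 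One then checks $\lims a_n(u_n,u_n)\leqs a(u,u)$ using Proposition \ref{convergence-result} to match the boundary terms and strong $L^2(D)$ convergence $u_n\to u$ coming from the smallness of the modified region.

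The main obstacle, and the reason the extension property is indispensable, is the boundary/trace term together with the gluing condition across $\Gamma$. Unlike the references cited in Remark \ref{sec:introduction-1}, the presence of homogeneous Neumann data on $\Sigma$ means the traces of interior and exterior functions need not match on the polygonal part, so one cannot simply work in $H^1(D)$; the functions live in the fractured space $H^1(D\setminus\overline G)$ and the only coupling between the two sides is on $\Gamma$ itself. Establishing that the weak limit in part \ref{point1} genuinely satisfies ${\ui}_{|\Gamma}={\ue}_{|\Gamma}$ in the sense of Definition \ref{def-trace}, and converting the discrete prefractal trace integrals into the self-similar measure integral in the limit, is where the extension operator and the compactness in Remark \ref{sec:trace-theorem-gamma} do the real work; I expect this to be the technically hardest part of the proof.
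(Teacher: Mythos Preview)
Your overall strategy is the paper's: weak compactness plus lower semicontinuity of the Dirichlet terms for part \ref{point1}, the extension operator $\cE$ with cutoffs for the recovery sequence in part \ref{point2}, and Proposition \ref{convergence-result} together with the compact trace embedding of Remark \ref{sec:trace-theorem-gamma} for the boundary integrals throughout.

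One step in your sketch for part \ref{point2} would fail as written. You claim the correction region has small volume and therefore carries vanishing energy, but the cutoff $\chi_n$ needed to interpolate between $u$ and $\cE(\ui)$ across a layer of thickness comparable to $\rat^n$ satisfies $|\nabla\chi_n|\sim \rat^{-n}$; the resulting term $\int |\nabla\chi_n|^2\,|\cE(\ui)-u|^2$ is of order $\rat^{-2n}$ times an $L^2$ norm on the layer, and volume shrinkage alone cannot compensate. The paper's remedy is to use the transmission condition itself: since $(\cE(\ui)-u)_{|\Gamma}=0$, a rescaled Poincar\'e inequality on each set $f_\sigma(\hat\omega)$, $\sigma\in\cA_n$, yields $\int_{f_\sigma(\hat\omega)}|\cE(\ui)-u|^2\leqs C\,\rat^{2n}\int_{f_\sigma(\hat\omega)}|\nabla(\cE(\ui)-u)|^2$, and the factor $\rat^{2n}$ exactly cancels the blow-up. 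Thus the matching condition on $\Gamma$, which you correctly flag as the crux in part \ref{point1}, is equally the key device in part \ref{point2}. A related subtlety in part \ref{point1}: Proposition \ref{convergence-result} treats a \emph{fixed} function, so to handle $\frac{1}{|\Gamma^n|}\int_{\Gamma^n}u_n^2$ with $u_n$ varying, the paper first proves a two-sided scaled trace inequality comparing $\frac{1}{|\Gamma^n|}\|v\|_{L^2(\Gamma^n)}^2$ and $\|v\|_{L^2_\mu(\Gamma)}^2$ up to gradient remainders of order $2^{-n/2}$, and only then invokes the strong $L^2_\mu(\Gamma)$ convergence of the traces.
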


\begin{remark}
The $M$-convegence of forms differs from the $\Gamma$-convergence only in that the sequence $(u_n)$ in point \ref{point1} of Definition \ref{Mosco-convergence} is assumed to converge weakly instead of strongly. In the following, only the $\Gamma$-convergence of the energy forms $a_n$ will be needed.
\end{remark}


\begin{proof}
We will prove separately points \ref{point1} and \ref{point2} in Definition \ref{Mosco-convergence}.
\\[1mm]
\textit{Proof of point \ref{point1}}~ Suppose that ${(u_n)}$ weakly converges  to $u$ in $L^2(D)$. 
Without loss of generality, one can suppose $\limi a_n(u_n,u_n)$ is finite. We may further assume that there exists  a subsequence, still called $(u_n)$, such that 
 $a_n(u_n,u_n)$ converges to some real number as $n\to \infty$; as a consequence, 
there exists a constant  $c$ independent of $n$ such that
\begin{equation}\label{u-bound}
a_n(u_n,u_n) ~\leqs~ c.
\end{equation}
In particular, for all $n$, $u_n\in V^n$, which implies that $u_n\in V$.
Then, \eqref{u-bound} implies that $({u_n}_{|\Omegai})$ is bounded in $H^1(\Omegai)$, and $({u_n}_{|\Omegae})$ is bounded in $H^1(\Omegae)$. Therefore, there is a subsequence that we still note $(u_n)$ such that $({u_n}_{|\Omegai})$  converges weakly in $H^1(\Omegai)$, and strongly in $L^2(\Omegai)$. Since $u_n \rightharpoonup u$ in $L^2(D)$, we see that ${u_n}_{|\Omegai} \rhu \ui$ in $H^1(\Omegai)$. Similarly, up to  a further extraction of a subsequence, ${u_n}_{|\Omegae} \rhu \ue$ in $H^1(\Omegae)$. Consequently,
	$$\limi \left(\int_{\Omegai^n} {|\nabla u_n|}^2\dx x + \int_{\Omegae^n} {|\nabla u_n|}^2\dx x \right)
		~\geqs~ \int_{\Omegai} {|\nabla u|}^2\dx x + \int_{\Omegae} {|\nabla u|}^2\dx x.$$
We will now prove that $\frac 1{|\Gamma^n|}\int_{\Gamma^n} u_n^2\dx x \to \int_\Gamma u^2\dx \mu$ as $n\to \infty$, which will yield point \ref{point1}
\\
The following  inequality was proved in \cite{ach-tch-2007-1}:  for every $v\in H^1(\Omegai)$, $\left\| v-   \langle v \rangle_{\Gamma^0}  \right \|  _{L_\mu ^2 (\Gamma)} \leqs C   {\| \nabla v\|} _{L^2(\Omegai)}$. This implies that
\begin{displaymath}
    | \langle v \rangle_{\Gamma^0} |    ~\leqs~    \| v  \|  _{L_\mu ^2 (\Gamma)} + C   {\| \nabla v\|} _{L^2(\Omegai)}.
\end{displaymath}
Similarly,  $  \left\| v-   \langle v \rangle_{\Gamma^0}   \right\| _{L ^2 (\Gamma^0 )}   \leqs C   {\| \nabla v\|}_{L^2(\Omegai)}$ 
implies that
\begin{displaymath}
  \frac 1 {\sqrt { |\Gamma^0|} }\| v  \|  _{L^2(\Gamma^0)}       ~\leqs  ~   | \langle v \rangle_{\Gamma^0} |   + C   {\| \nabla v\|} _{L^2(\Omegai)}.
\end{displaymath}
for some constant independent of $v$ that we still note $C$. Combining these two inequalities, we obtain that  
\begin{displaymath}
\frac 1 {\sqrt { |\Gamma^0|} }\| v  \|  _{L^2 (\Gamma^0)}      ~ \leqs~ 
   \|v\|_{L_\mu ^2 (\Gamma)} +  C   {\| \nabla v\|}_{L^2(\Omegai)}.
\end{displaymath}
Hence, for every $\sigma\in \cA_n$,
\begin{eqnarray*}
\frac 1 {\sqrt { |\Gamma^\sigma|} }\| v  \|  _{L ^2 (\Gamma^\sigma)}       &\leqs &
   \|v\circ f_\sigma \|_{L_\mu ^2 (\Gamma)} +  C   {\| \nabla (v \circ f_\sigma) \|}_{L^2(\Omegai)}\\
&=& 2^{\frac n 2 }\|v \|_{L_\mu ^2 (f_\sigma(\Gamma))}+C   {\| \nabla v \|}_{L^2(f_\sigma(\Omegai))}.
\end{eqnarray*}
This yields that
\begin{displaymath}
  \frac 1 { |\Gamma^\sigma| }\| v  \|^2   _{L ^2 (\Gamma^\sigma)} \leqs  
2^{n  }\|v \|^2_{L_\mu ^2 (f_\sigma(\Gamma))}+ 2C 2^{\frac n 2 }  \|v \|_{L_\mu ^2 (f_\sigma(\Gamma))}  {\| \nabla v \|}_{L^2(f_\sigma(\Omegai))}
+ C^2   {\| \nabla v \|}^2 _{L^2(f_\sigma(\Omegai))}.
\end{displaymath}
Therefore
\begin{displaymath}
  \begin{split}
&\frac 1 {|\Gamma^n|} \int_{\Gamma^n} {u_n}^2 \dx x 
~= ~ \frac 1{2^n} \sum_{\sigma\in \cA_n} \frac 1 {|\Gamma^\sigma|} \int_{\Gamma^\sigma} {u_n}^2 \dx x\\
\leqs ~  & \|u_n \|^2 _{L_\mu ^2 (\Gamma)} +  2C 2^{-\frac n 2 }   \sum_{\sigma\in \cA_n}  
 \|u_n \|_{L_\mu ^2 (f_\sigma(\Gamma))}  {\| \nabla u_n \|}_{L^2(f_\sigma(\Omegai))}
+
\frac {C^2} {2^n}         \sum_{\sigma\in \cA_n}      {\| \nabla u_n \|}^2 _{L^2(f_\sigma(\Omegai))}\\
\leqs ~  &\|u_n \|^2 _{L_\mu ^2 (\Gamma)} +  2C 2^{-\frac n 2 }   \|u_n \| _{L_\mu ^2 (\Gamma)}  \left(   \sum_{\sigma\in \cA_n}  {\| \nabla u_n \|}^2 _{L^2(f_\sigma(\Omegai))}\right)^{\frac 1  2} +
\frac {C^2} {2^n}         \sum_{\sigma\in \cA_n}      {\| \nabla u_n \|}^2 _{L^2(f_\sigma(\Omegai))}.
  \end{split}
\end{displaymath}
Since  ${u_n}_{|\Omegai}$ is a bounded sequence in  $H^1(\Omegai)$, there exists a constant $M$ such that
\begin{displaymath}
  \frac 1 {|\Gamma^n|} \int_{\Gamma^n} {u_n}^2 \dx x  \leqs  \|u_n \|^2 _{L_\mu ^2 (\Gamma)} +  2C M 2^{-\frac n 2 }   \|u_n \| _{L_\mu ^2 (\Gamma)} +  
\frac {C^2 M^2} {2^n} . 
\end{displaymath}
Moreover, since $u_n$ weakly converges  to $u$ in $H^1(\Omegai)$, then up to the extraction of a subsequence, ${u_n}_{|\Gamma}$ strongly converges
  to $u_{|\Gamma}$ in $L^2_\mu(\Gamma)$, from  Remark~\ref{sec:trace-theorem-gamma}.
Hence, we obtain that
\begin{equation*}
\lims \frac 1 {|\Gamma^n|} \int_{\Gamma^n} {u_n}^2 \dx x ~\leqs ~ \int_{\Gamma} u^2 \dx \mu.
\end{equation*}
Similarly, the following inequality holds for every $v\in H^1(\Omegai)$:
\begin{equation}\label{poincare-ineq2}
{\| v\|}_{L^2_\mu(\Gamma)} ~\leqs ~ \frac 1 {\sqrt{|\Gamma^0|}} {\| v\|}_{L^2(\Gamma^0)} +  C' {\| \nabla v\|}_{L^2(\Omegai)}
\end{equation}
for some constant $C'$ independent of $v$. As above, we deduce  that
\begin{equation*}
\limi \frac 1 {|\Gamma^n|} \int_{\Gamma^n} {u_n}^2 \dx x ~\geqs ~ \int_{\Gamma} u^2 \dx \mu,
\end{equation*}
and we obtain the desired result.
\\[2mm]
\textit{Proof of point \ref{point2}}~ Take $u\in L^2(D)$. By \eqref{def-a}, we may assume that $u\in V$. We must construct $(u_n)$ converging strongly in $L^2(D)$ such that \eqref{M-conv2} holds. Note that the choice $u_n=u$ cannot be made, since $u\not\in V^n$ in general.
\\
Take $\delta>0$ and consider a neighborhood $\omega \subset D$ of $\Omegai$ such that $\Omegai \Subset \omega$ and $\sup_{x\in \omega}d(x,\Omegai)<\delta$, where $d(x,\Omegai) = \inf_{y\in \Omegai} |x-y|$. We introduce the notations $\omega^\sigma = f_\sigma(\omega)$ for  $\sigma \in \cA$ and $\omega_n = \bigcup_{\sigma\in \cA_n} \omega^\sigma$ for all integer $n$.
\\[1mm]
For every $n$, introduce the cut-off function $\chi_n$ in $D$ defined by
\begin{equation}
\chi_n(x) ~=~ {(1-\delta r^{-n} d(x,\omega_n))}^+,
\end{equation}
where $\alpha^+$ stands for the positive part of a real number $\alpha$.
Hence, $\chi_n \equiv 1$ in $\omega_n$ and $\chi_n \equiv 0$ outside $\tilde \omega_n := \{x\in D,~ d(x,\omega_n)<\delta r^n\}$. Note that if we set $\tilde \omega := \{x\in D,~ d(x,\omega)<\delta\}$ and $\tilde \omega^\sigma := f_\sigma(\tilde \omega) = \{x\in D,~ d(x,\omega^\sigma)<\delta r^n\}$ for $\sigma\in \cA_n$, then  $\tilde \omega_n = \bigcup_{\sigma\in \cA_n} \tilde \omega^\sigma$.
\\[1mm]
We can assume that $\delta$ is small enough so that $\tilde \omega^\sigma\cap \tilde \omega^\tau =\emptyset$ when $\sigma,\tau \in \cA_n$ and $\sigma\not=\tau$, since $f_\sigma(\Omegai)\cap f_\tau(\Omegai)=\emptyset$.
\\[1mm]
We now define a sequence of functions $u_n$ by
\begin{equation}
\label{u_n}
u_n = (1-\chi_n)u + \chi_n \cE (\ui),
\end{equation}
where $\cE$ is the extension operator introduced in \eqref{ext-op} and as above, $\ui = u_{|\Omegai}$.
Obviously, $u_n$ belongs to the space $V^n$ and the sequence $(u_n)$ strongly converges  to $u$ in $L^2(D)$. We will prove that $\lim a_n(u_n,u_n) = a(u,u)$ as $n\to \infty$. 
We start by showing that 
\begin{equation}\label{an->a-1}
	I_n :=\int_{\Omegai} {|\nabla u|}^2 \dx x + \int_{\Omegae} {|\nabla u|}^2 \dx x
	-\left( \int_{\Omegai^n} {|\nabla u_n|}^2 \dx x + \int_{\Omegae^n} {|\nabla u_n|}^2 \dx x\right)
	~\longrightarrow~ 0
\end{equation}
as $n\to \infty$. First observe that
\begin{equation*}
I_n ~=~ \int_{\tilde \omega_n\setminus \Omegai} {|\nabla u|}^2 \dx x - \int_{\tilde \omega_n\setminus \Omegai} {|\nabla u_n|}^2 \dx x.
\end{equation*}
Hence, it is enough to show that $\int_{\tilde \omega_n\setminus \Omegai} {|\nabla (u-u_n)|}^2\dx x \to 0$ as $n\to \infty$.
Note that
\begin{equation}
\int_{\tilde \omega_n\setminus \Omegai} {|\nabla(u-u_n)|}^2 \dx x~=~ \int_{\tilde \omega_n\setminus \Omegai} {|\nabla(\chi_n(\cE (\ui) -u))|}^2 \dx x
~\leqs~ 2(I_n^1 + I_n^2)
\end{equation}
where $I_n^1 = \int_{\tilde \omega_n\setminus \Omegai} {|\nabla(\cE (\ui) -u)|}^2 \dx x$ and
$I_n^2 = \int_{\tilde \omega_n\setminus \Omegai} {|(\nabla \chi_n)(\cE (\ui) -u)|}^2 \dx x$.
\\[1mm]
First observe that $I_n^1 \to 0$ as $n\to \infty$ since $\nabla (\cE (\ui) -u)\in L^2(\Omegae)$. We are left with dealing with $I_n^2$. One has
\begin{eqnarray*}
	I_n^2 &\leqs & \displaystyle  c\; r^{-2n} \int_{\tilde \omega_n\setminus \Omegai} {|\cE (\ui) -u|}^2\dx x\\
	&\leqs& \displaystyle c\; r^{-2n} \sum_{\sigma\in \cA_n} \int_{\tilde \omega^\sigma \setminus \Omegai} {|\cE (\ui) -u|}^2\dx x.
\end{eqnarray*}
where $c>0$ is a constant independent of $n$. Introduce the set $\hat \omega = {f_1}^{-1}(f_1(\tilde \omega)\setminus \Omegai)$. We have the following Poincar{\'e} inequality: there exists a constant $C$ such that for every $v\in H^1(\hat \omega)$ such that $v_{|\Gamma}=0$,
\begin{equation}\label{PW}
	\int_{\hat \omega} {|v|}^2\dx x ~\leqs~ C \int_{\hat \omega} {|\nabla v|}^2 \dx x.
\end{equation}
Observe that if $\delta$ is small enough, then $\tilde \omega^\sigma\setminus \Omegai = f_\sigma(\hat \omega)$ for every $\sigma\in \cA$. Therefore, applying a rescaled version of \eqref{PW} to the function $\cE (\ui)-u$, we obtain that there is a constant $c'>0$ indenpendant of $n$ such that
\begin{eqnarray*}
I_n^2 &\leqs& c'\sum_{\sigma\in \cA_n}  \int_{\tilde \omega^\sigma\setminus \Omegai} {|\nabla (\cE(\ui) - u)|}^2\dx x\\
 &=& c' \int_{\tilde \omega_n\setminus \Omegai} {|\nabla (\cE(\ui) - u)|}^2\dx x
 \end{eqnarray*}
since the sets $\tilde \omega^\sigma$, $\sigma \in \cA_n$ are pairwise disjoint.
We deduce that $I_n^2 \to 0$ as $n\to \infty$, since $\cE(\ui) - u\in H^1(\Omegae)$, which yields \eqref{an->a-1}.
\\
We will now prove that 
\begin{equation}\label{an->a-2}
	\frac 1 {|\Gamma^n|} \int_{\Gamma^n} {u_n}^2 \dx x \longrightarrow \int_\Gamma u^2 \dx \mu 
\end{equation}
as $n\to \infty$, which will conclude the proof of point \ref{point2}.
Observe that for every integer $n$, $\cE(\ui)_{|\Gamma^n} = u$, which implies that ${u_n}_{|\Gamma^n}=u$. 
We are left with proving that $\frac{1}{|\Gamma^n|} \int_{\Gamma^n} u^2\dx x \longrightarrow \int_{\Gamma} u^2\dx \mu $
as $n\to \infty$, which holds by Proposition \ref{convergence-result}. 
\end{proof}

%
A standard consequence of the Mosco-convergence of the energy forms proved in Theorem \ref{M-cv} is the convergence of the solutions 
of the problems \eqref{prob-n-fv} to the solution of problem \eqref{prob-fv}, in $L^2(D)$ and in $V$ (recall that $u_n$ is bounded in $V$ from Lemma~\ref{sec:transm-probl-with}). 

\begin{theorem}
Take $f\in L^2(D)$, and note $u_n$ (resp. $u$) the solution of problem \eqref{prob-n-fv} (resp. \eqref{prob-fv}).
The sequence $(u_n)$ converges to $u$ in the space $V$.
\end{theorem}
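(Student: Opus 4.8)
The plan is to deduce convergence of the solutions from the Mosco-convergence of the energy forms established in Theorem~\ref{M-cv}, combined with the uniform bound of Lemma~\ref{sec:transm-probl-with}. The classical principle is that M-convergence of the quadratic forms governing the minimization problems \eqref{min-u_n} and \eqref{min-u} forces the minimizers to converge; the only care needed is that these are \emph{affine} (inhomogeneous) problems posed on the shifted spaces $V^n\subset V$, with the Dirichlet datum $u_0$ on $\Gamma^0$ baked into the forms via \eqref{def-a}--\eqref{def-a_n}, so I must keep track of the source term $\int_D f v\,\dx x$ throughout.

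First I would extract a weakly convergent subsequence. By Lemma~\ref{sec:transm-probl-with}, $(u_n)$ is bounded in $V$, hence (up to a subsequence, not relabeled) $u_n \rhu u_\star$ weakly in $V$ and, by the Rellich-type compactness alluded to in Section~\ref{sec-function-spaces}, strongly in $L^2(D)$, for some limit $u_\star$. Next I would identify $u_\star$ as the solution $u$ of \eqref{prob-fv}. The lower-bound inequality \ref{point1} of Definition~\ref{Mosco-convergence} gives
\[
\liminf_{n\to\infty} a_n(u_n,u_n) ~\geqs~ a(u_\star,u_\star),
\]
while the strong $L^2$ convergence gives $\int_D f u_n\,\dx x \to \int_D f u_\star\,\dx x$; together these yield $\liminf_n \bigl(a_n(u_n,u_n)-2\int_D f u_n\bigr)\geqs a(u_\star,u_\star)-2\int_D f u_\star$. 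For the matching upper bound I would test minimality against a recovery sequence: for the genuine solution $u$ of \eqref{prob-fv}, the upper-bound property \ref{point2} supplies $v_n\to u$ strongly in $L^2(D)$ with $\limsup_n a_n(v_n,v_n)\leqs a(u,u)$; since each $u_n$ minimizes \eqref{min-u_n} over $V^n$ and $v_n\in V^n$, one has $a_n(u_n,u_n)-2\int_D f u_n \leqs a_n(v_n,v_n)-2\int_D f v_n$. Passing to the limit and chaining the two inequalities forces $a(u_\star,u_\star)-2\int_D f u_\star \leqs a(u,u)-2\int_D f u$, so $u_\star$ minimizes the functional \eqref{min-u} over $V$; by uniqueness of the minimizer, $u_\star=u$. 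Since every subsequence has a further subsequence converging to the same limit $u$, the whole sequence converges: $u_n\to u$ strongly in $L^2(D)$.

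It remains to upgrade $L^2(D)$-convergence to convergence in $V$, i.e.\ in the gradient norm on $\Omegai\cup\Omegae$ together with the trace term. The chain of inequalities above actually sandwiches the energies, yielding $\lim_n \bigl(a_n(u_n,u_n)-2\int_D f u_n\bigr) = a(u,u)-2\int_D f u$, and hence $\lim_n a_n(u_n,u_n)=a(u,u)$ (convergence of energies). Combined with the weak convergence $u_n\rhu u$ in $V$ and the lower semicontinuity furnished by \ref{point1}, convergence of the norms promotes weak to strong convergence by the standard Hilbert-space argument ($\|u_n-u\|^2 = \|u_n\|^2 - 2\langle u_n,u\rangle + \|u\|^2 \to 0$). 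The main obstacle I anticipate is precisely this last upgrade in the delicate geometry here: the relevant ``norm'' $a_n(\cdot,\cdot)^{1/2}$ lives on the moving spaces $V^n$ and includes the rescaled prefractal trace term $\frac{\alpha}{|\Gamma^n|}\int_{\Gamma^n}u_n^2$, so I would lean on Proposition~\ref{convergence-result} (convergence of the prefractal trace integrals to $\int_\Gamma u^2\,\dx\mu$) and on the gradient-energy identity \eqref{an->a-1} derived inside the proof of Theorem~\ref{M-cv} to match the bulk Dirichlet energies on $\Omegai,\Omegae$ term-by-term, rather than trying to work abstractly with a single fixed inner product.
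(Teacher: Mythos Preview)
Your approach is correct and matches what the paper intends: the paper itself gives no proof, merely asserting that the result is ``a standard consequence of the Mosco-convergence of the energy forms proved in Theorem~\ref{M-cv}'' together with the boundedness of Lemma~\ref{sec:transm-probl-with}. You have supplied precisely this standard argument (extract a weak limit, identify it via the liminf/limsup inequalities, then upgrade using convergence of energies), so there is nothing materially different to compare.

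One small correction in your final paragraph: Proposition~\ref{convergence-result} concerns a \emph{fixed} $u\in H^1(\Omegai)$, not the varying $u_n$, and the label \eqref{an->a-1} refers to the specific recovery sequence constructed in point~\ref{point2}, not to the solutions. The result you actually need for the upgrade---that $\frac{1}{|\Gamma^n|}\int_{\Gamma^n} u_n^2 \to \int_\Gamma u^2\,\dx\mu$ for any sequence $u_n$ bounded in $V$ with $u_n\rhu u$ in $H^1(\Omegai)$---is established inside the proof of point~\ref{point1} of Theorem~\ref{M-cv} (via the two-sided estimates derived there). Once you know the trace term converges, and since $\int_{\Omegai^n}|\nabla u_n|^2+\int_{\Omegae^n}|\nabla u_n|^2=\int_{\Omegai}|\nabla u_n|^2+\int_{\Omegae}|\nabla u_n|^2$ for $u_n\in V^n\subset V$ (as noted in \eqref{eq:3}), the energy convergence $a_n(u_n,u_n)\to a(u,u)$ forces convergence of the Dirichlet integrals on $\Omegai$ and $\Omegae$, and the Hilbert-space argument you sketch then gives strong convergence of the gradients.
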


\section{A particular geometry with $\rat=\rat^\star_\theta$}\label{r=rstar}
\noindent
As seen before, the proof of Theorem \ref{M-cv} is based on the extension result of \S \ref{ext-subcritical}.
 In the case $\rat=\rat^\star_\theta$, the $H^1$-extension property is no longer true for the domain $\Omegai$ (see Remark \ref{p=pstar}).
 In what follows, we focus on the special case when $\theta=0$. We will see that in this case, the transmission condition imposed on $\Gamma$ yields an extension result (see Theorem \ref{ext-thm}) which is the main ingredient for proving the $M$-convergence of the energy forms.
\\[1mm]
In the case $\theta=0$, it can be seen that $r^\star_\theta =\frac 1 2$, and the ramified domain described in \S \ref{domains} is as in Figure \ref{trapeze}. In this particular case, the set $f_1(\Gamma)\cap f_2(\Gamma)$ 
is reduced to a single point that we call $A$. Observe that the self-similar part $\Gamma$ of the boundary is a line segment, and the self-similar measure $\mu$ associated with $\Gamma$ is the normalized one-dimensional Hausdorff measure.
\\
Since $\rat=\rat^\star_\theta$, the domain $\Omegae$ has infinitely many connected components. Call $U$ the outer connected component of $\Omegae$, \textit{ie} the only connected component which has a nonempty intersection with $\partial D$ (see Figure \ref{trapeze}).
Observe that $\Gamma$ is a subset of $\partial U$, and that the intersection of $\Gamma$ with the boundary of every other connected component of  $\Omegae$  is reduced to a single point. Apart from $U$, each connected component of $\Omegae$ is a triangle whose top vertex is at a dyadic point of $\Gamma$.
The largest triangle is named $T$, see Figure \ref{trapeze}, and all the other triangles are the images of $T$ by $f_\sigma$, $\sigma\in \cA_n$, $n\ge 1$.
\\
We consider the transmission problem
\begin{equation}\label{pb-transmission_2} 
\left\{ \begin{array}{ll}
	-\Delta u + \beta u= f & \mathrm{in}~ \Omegai\cup\Omegae,\\[1mm]
	{\ui}_{|\Gamma} = {\ue}_{|\Gamma} &    \\[1mm]
	[\partial_n u]=\alpha u &\text{on}~ \Gamma,\\[1mm]
	\partial_n \ui =\partial_n \ue = 0 & \mathrm{on}~ \partial \Omegai \setminus \Gamma,\\[1mm]
	\partial_n \ue = 0,~~ \ui = u_0 & \mathrm{on}~ \Gamma^0,\\[1mm]
	\partial_n \ue =  0 & \mathrm{on}~ \partial D,
\end{array} \right.
\end{equation}
where $\alpha$ and $\beta$ are positive numbers. The trace ${\ue}_{|\Gamma}$ in the transmission condition
is meant as the trace of the function $u_{|U}$ on the set $\Gamma$.
\begin{remark}
  \label{sec:part-geom-with}
The reason for considering the operator $-\Delta u+\beta u$ with $\beta>0$ instead of $-\Delta u$ as in the former case is that, in the present case,
 $\Omegae$ is an infinite union of disjoint connected sets: $\Omegae=U\cup \bigcup_{\sigma\in \cA}  f_\sigma(T)$. Therefore, (\ref{pb-transmission_2}) 
involves Neumann problems in $T$ and in $ f_\sigma(T)$, $\sigma\in \cA_n$, $n\ge 1$, which are not  well posed if $\beta=0$ and 
the average of $f$ in these sets is not zero. It would also be possible to consider the case $\beta=0$ under additional assumptions, on the support of $f$ for example, but 
this would imply further technical details, because the solutions of Neumann problems in the holes would then be defined up to the addition of constants.
\end{remark}

\begin{remark}\label{rem-theta>0}
When $\theta>0$ and $r=r^*$, the situation is quite different: $\Gamma$ is not entirely contained in the boundary of any connected component of $\Omegae$. 
It can be shown that there exists $\delta\in (1,d)$ such that the intersections of $\Gamma$ with the boundary of the connected components of $\Omegae$ 
have Hausdorff dimension $\delta$. These sets are called the canopies of the domain $\Omegai$.
 In this case, the transmission condition has to be described more carefully. This is the topic of a work in progress.
\end{remark}
The meaning of (\ref{pb-transmission_2}) is the  variational formulation (\ref{prob-fv}) where $V$ is defined by (\ref{space-V}) and 
\begin{equation}\label{eq:6}
a(u,v) ~=~ \int_{\Omegai} ( \nabla \ui \cdot \nabla \vi +\beta \ui \vi) ~ \dx x + \int_{\Omegae} (\nabla \ue \cdot \nabla \ve + \beta \ue\ve)~\dx x   + \alpha \int_\Gamma u_{|\Gamma} v_{|\Gamma} \dx \mu.
\end{equation}
In order to set the transmission problems in the geometries with prefractal interfaces, we first need to define some trapezoidal subsets of the triangular holes as follows:
let $H$ be the height of the triangle $T$. Choosing the coordinates in such a way that $\Gamma^0$ is a segment of the line $\{x_2=0\}$,
 we see that $\Gamma$ is a segment of the line $\{x_2=2H\}$. Then, we can also define  $\widehat T^n$ and $ \widehat T_\sigma  ^n$ by 
 $\widehat T^n = T\cap \{    (2-   3 /2^{n+1}) H < x_2< (2- 2^{-n}) H \}$ and 
$ \widehat T_\sigma  ^n=  f_\sigma(T)\cap \{    (2-   3 /2^{n+1}) H < x_2< (2- 2^{-n}) H \}$
for $\sigma\in \cA_m$ and $m<n$.
\\  
 Finally we define 
 \begin{equation}
   \label{eq:1}
\widehat \omega^n_{\rm ext}=   \bigcup_{m=0}^{n-1} \bigcup_{\sigma\in \cA_m}  \widehat T_\sigma  ^n   
 \;\; \subset \;  \Omegae^n \cap \bigg\{    2H-   \frac 3 {2^{n+1}} H < x_2< 2H- 2^{-n} H \bigg\}.
 \end{equation}
The transmission problem with interface $\Gamma^n$ is then 
\begin{equation}\label{prob-transmission-2n}
\left\{ \begin{array}{ll}
	- {\rm div } \left (  \nu_n   \nabla u\right)   +\beta u     = f & \mathrm{in}~ \Omegai^n\cup\Omegae^n,\\[1mm]
	[u]=0  &\text{on } \Gamma^n,\\[1mm]
	[\partial_n u]= \frac \alpha {|\Gamma^n|} u &\text{on } \Gamma^n,\\[1mm]
	\partial_n \ui^n =\partial_n \ue^n = 0 & \mathrm{on}~ \Sigma^n,\\[1mm]
	\partial_n \ue^n = 0,~~ \ui^n = u_0 & \mathrm{on}~ \Gamma^0,\\[1mm]
	\partial_n \ue^n = 0 & \mathrm{on}~ \partial D,
\end{array} \right.
\end{equation}
where
\begin{equation}
  \label{eq:5}
\nu_n = 2^{-2n} \one_{ \widehat \omega^n_{\rm ext}     }  + \one_{D\backslash \widehat \omega^n_{\rm ext}}.
\end{equation}
Note that $\nu_n =1$ in $\Omegai^n$ and that the Lebesgue measure of the set where $\nu_n = 2^{-2n}$ vanishes as $n\to \infty$.
Hence $\nu_n$ tends to $1$ almost everywhere in $D$. 
The variational formulation of (\ref{prob-transmission-2n}) is (\ref{prob-n-fv}) with $V_n$ defined in (\ref{eq:7}), and $a_n$ defined as follows:
\begin{equation}\label{eq:8}
  \begin{split}
&a_n(u,v)\\ =&  \int_{\Omegai^n} (\nabla \ui^n \cdot \nabla \vi^n  +\beta   \ui^n \vi^n) ~ \dx x + \int_{\Omegae^n}  (\nu_n  \nabla \ue^n  \cdot \nabla \ve^n + \beta \ue^n   \ve^n )\dx x + \frac \alpha {|\Gamma^n |}\int_{\Gamma^n} u_{|\Gamma^n} v_{|\Gamma^n} \dx x.    
  \end{split}
\end{equation}
\noindent
\begin{remark}
  \label{sec:part-geom-with-1}
The reason for modifying the partial differential equation in (\ref{prob-transmission-2n}) by taking $-{\rm div } \left (  \nu_n   \nabla u\right)$ instead of  $-\Delta u$ in 
(\ref{prob-transmission-n}) is that for a function $u\in V $, $u_{|\Omegai}$  is completely independent from $u_{|f_\sigma(T)}$.
 This explains why  the construction of a sequence of functions $(u_n)$  such that   $u_n\in V_n$, $u_n\to u$ in $L^2(D)$ and $a_n(u_n, u_n)\to a(u,u)$,
 is difficult without modifying the coefficients of the partial differential equation near the top of the triangles $T$ and $f_\sigma(T)$ in order to cope with  the 
possibly strong gradients of $u_n$. Although we have not tried it, it may be possible to choose a parameter larger than $2^{-2n}$ in the definition of $\nu_n$.
\end{remark}
\begin{figure}[h!]
\begin{center}
\scalebox{0.4}{\input{trapeze2.pstex_t}}
\caption{An exemple of the domains $\Omegai$ and $\Omegae$ in the case $\theta=0$}\label{trapeze}
\end{center}
\end{figure}
\noindent
The main result of this paragraph is the following theorem
\begin{theorem}\label{M-cv-theta=0}
Assume that $\theta=0$ and $\rat=\rat^\star_\theta=\frac 1 2$. Then the energy forms $a_n$ defined in (\ref{eq:8}) M-converge in $L^2(D)$ to the form $a$ defined in (\ref{eq:6}).
\end{theorem}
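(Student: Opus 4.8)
The plan is to verify the two conditions of Definition~\ref{Mosco-convergence} by following the architecture of the proof of Theorem~\ref{M-cv}, while accommodating three new features of the present setting: the zeroth-order terms $\beta\int u^2$ in \eqref{eq:6} and \eqref{eq:8}, the degenerate weight $\nu_n$ of \eqref{eq:5} in the exterior energy, and the failure of the $H^1$-extension property (Remark~\ref{p=pstar}), which forces us to replace the Jones extension used for Theorem~\ref{M-cv} by the operator $\cE$ of Theorem~\ref{ext-thm}. The disconnection of $\Omegae$ into the outer component $U$ and the triangles $f_\sigma(T)$, together with the smallness of $\nu_n$ near their apices, is precisely what will make the recovery sequence of point~\ref{point2} feasible, as anticipated in Remark~\ref{sec:part-geom-with-1}.

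For point~\ref{point1} I would take $u_n\rightharpoonup u$ in $L^2(D)$, assume $\liminf_n a_n(u_n,u_n)$ finite, and pass to a subsequence with $a_n(u_n,u_n)\leqs c$. Since $\nu_n\equiv 1$ on $\Omegai$ and on $U$, this bounds ${u_n}_{|\Omegai}$ in $H^1(\Omegai)$ and ${u_n}_{|U}$ in $H^1(U)$; on each triangle $f_\sigma(T)$ the weight equals $1$ away from the neck $\widehat T^n_\sigma$, which shrinks to the ($\mu$-negligible) apex, so one obtains $H^1$ bounds on every compact subset avoiding it, whence, after extraction and exhaustion, ${u_n}_{|\Omegae}\rightharpoonup\ue$ with $\ue\in H^1(\Omegae)$. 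Because $u_n\in V^n\subset V$, $V$ is a weakly closed affine subspace, and the trace on $\Gamma$ is read on the $U$-side, the matching condition ${\ui}_{|\Gamma}={\ue}_{|\Gamma}$ passes to the limit and $u\in V$. The gradient energies are then controlled by weak lower semicontinuity together with the exhaustion $\Omegai^n\uparrow\Omegai$ (discarding the nonnegative neck contributions), the terms $\beta\int_D u_n^2$ by weak lower semicontinuity of the $L^2$-norm, and the interface term by the computation of Theorem~\ref{M-cv}: the Poincar\'e and refined trace inequalities (Theorem~\ref{SPI}, valid since $\rat=\tfrac12$) combined with the strong convergence ${u_n}_{|\Gamma}\to u_{|\Gamma}$ in $L^2_\mu(\Gamma)$ from Remark~\ref{sec:trace-theorem-gamma} give $\tfrac1{|\Gamma^n|}\int_{\Gamma^n}u_n^2\to\int_\Gamma u^2\dx\mu$. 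Super-additivity of $\liminf$ then yields \eqref{M-conv1}.

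For point~\ref{point2}, given $u\in V$, the transmission condition forces ${\ui}_{|\Gamma}=(u_{|U})_{|\Gamma}\in H^{1/2}(\Gamma)$, so Theorem~\ref{ext-thm} provides $w:=\cE(\ui)\in H^1(\R^2)$ with $w_{|\Omegai}=\ui$. I would set $u_n$ equal to $w$ in a neighbourhood of $\Gamma$ in $D$ — this covers both the cells lying just beyond $\Gamma^n$ and the tips of the triangles above their necks, so that $u_n$ coincides with a single $H^1(\R^2)$-function across $\Gamma^n$, hence $u_n\in V^n$, while staying compatible with the interior trace on $\Gamma$ — and equal to $u_{|U}$ in the bulk of $U$ and to $u_{|f_\sigma(T)}$ in the bulk of each formed triangle, so as to recover the correct exterior energy. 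Between these regimes $u_n$ interpolates in two kinds of thin layers: near $\Gamma$ on the $U$-side, connecting $w$ to $u_{|U}$, which share the same trace on $\Gamma$ by the matching condition, so that a rescaled Poincar\'e inequality makes the layer energy vanish; and in the necks $\widehat T^n_\sigma$, connecting $w$ to the independent value $u_{|f_\sigma(T)}$, where the small weight $\nu_n=2^{-2n}$ absorbs the transition gradient. Then $u_n$ differs from $u$ only on a set of vanishing Lebesgue measure (the transition layers), so $u_n\to u$ in $L^2(D)$; the interior and $U$/triangle-bulk energies converge to the corresponding parts of $a(u,u)$; and, since $u_n\equiv w$ on $\Gamma^n$ with $w_{|\Gamma}=u_{|\Gamma}$, Proposition~\ref{convergence-result} applied to $w$ gives $\tfrac\alpha{|\Gamma^n|}\int_{\Gamma^n}u_n^2\to\alpha\int_\Gamma u^2\dx\mu$, establishing \eqref{M-conv2}.

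The main obstacle is the control of the weighted neck energy $\sum_{m=0}^{n-1}\sum_{\sigma\in\cA_m}2^{-2n}\int_{\widehat T^n_\sigma}|\nabla u_n|^2$. Each neck has height and width of order $2^{-n}$, so a transition between $w$ and $u_{|f_\sigma(T)}$ creates gradients of order $2^{n}$; squared and integrated this is $O(1)$ per neck, and there are of order $2^{n}$ necks, so without the weight the sum would not tend to $0$ — exactly the difficulty flagged in Remark~\ref{sec:part-geom-with-1}, and the reason $\nu_n$ is taken as small as $2^{-2n}$. The delicate point is to interpolate using local mean values and to estimate $\int_{\widehat T^n_\sigma}|w-u_{|f_\sigma(T)}|^2$ by the local Dirichlet energies through a rescaled Poincar\'e inequality, so that after the cancellation of $2^{2n}$ against $\nu_n=2^{-2n}$ the residual sum is dominated by the convergent tails $\sum_\sigma\bigl(\|\nabla w\|_{L^2(f_\sigma(T))}^2+\|\nabla u\|_{L^2(f_\sigma(T))}^2\bigr)$ and therefore vanishes. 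Carrying out this estimate, along with the geometric bookkeeping of which triangles reach the band \eqref{eq:1} and the verification of the gentle $U$-side layer, is the technical heart of the argument.
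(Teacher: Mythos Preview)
Your architecture is right and matches the paper's, but you over-engineer point~\ref{point2} in two places, and one of your stated justifications does not work as written.

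First, the interpolation layer on the $U$-side is unnecessary. In this geometry $U$ meets $\Omegai$ only along the segment $\Gamma$, so the paper simply takes $u_n=\ue$ on all of $U$ and $u_n=\ui$ on all of $\Omegai$; these already match on $\Gamma$ by the transmission condition, and the ``removed cells'' above level $n$ (where $u_n=\ui$) touch $U$ only through $\Gamma$. No blending near $\Gamma$ on the $U$-side is needed, and hence no rescaled Poincar\'e argument there either. The only interpolation is inside the triangles $f_\sigma(T)$, $\sigma\in\cA_m$, $m<n$, via a cut-off $\chi_n(x_2)$ supported in the band $\{(2-3/2^{n+1})H<x_2<(2-2^{-n})H\}$.

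Second, your control of the weighted neck energy via a rescaled Poincar\'e inequality and ``convergent tails'' is not correct as stated: a Poincar\'e inequality cannot bound $\int_{\widehat T^n_\sigma}|w-u_{|f_\sigma(T)}|^2$ by local Dirichlet energies, because $w=\cE(\ui)$ and $u_{|f_\sigma(T)}$ share no trace on $\widehat T^n_\sigma$ (the triangle meets $\Gamma$ only at its apex). And even granting your estimate, the sum $\sum_\sigma\|\nabla w\|_{L^2(f_\sigma(T))}^2$ over $m<n$ is bounded by a fixed finite number, not a vanishing tail. The paper's argument is much simpler and avoids this entirely: since $\|\chi_n'\|_{L^\infty}\lesssim 2^n$ and $\nu_n=2^{-2n}$ on $\widehat\omega^n_{\rm ext}$, one has $\|\sqrt{\nu_n}\,\nabla\chi_n\|_{L^\infty}\leqs C$ uniformly in $n$; hence
\[
\sum_{m=0}^{n-1}\sum_{\sigma\in\cA_m}\int_{f_\sigma(T)}\nu_n\,|\nabla\chi_n|^2\,|\cE(\ui)-\ue|^2
\;\leqs\; C^2\int_{\widehat\omega^n_{\rm ext}}|\cE(\ui)-\ue|^2\dx x\;\longrightarrow\;0
\]
by dominated convergence, since $\cE(\ui)-\ue\in L^2(\Omegae)$ and $|\widehat\omega^n_{\rm ext}|\to 0$. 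The remaining piece $\int\nu_n\chi_n^2|\nabla(\cE(\ui)-\ue)|^2$ vanishes for the same reason ($\nabla(\cE(\ui)-\ue)\in L^2(\Omegae)$ and the support of $\chi_n$ in the triangles shrinks). No Poincar\'e, no tails.

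For point~\ref{point1}, your exhaustion argument on the triangles works but is heavier than needed: the paper observes directly that $\sqrt{\nu_n}\,\nabla u_n$ is bounded in $L^2(\Omegae)$, extracts a weak limit, and identifies it with $\nabla\ue$ using that $\nu_n\to 1$ a.e.\ and test functions have compact support away from the apices.
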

\noindent
Since  $\partial U$ is Lipschitz-continuous, a standard trace result yields that for every $u\in V$, ${\ue}_{|\Gamma} \in H^{1/2}(\Gamma)$. 
Hence, the transmission condition in (\ref{pb-transmission_2}) implies that
\begin{equation}\label{trace-theta=0}
{\ui}_{|\Gamma}\in H^{1/2}(\Gamma).
\end{equation}
Note that (\ref{trace-theta=0}) is not only a consequence of the fact that $\ui \in H^1(\Omegai)$, because the latter property only implies that  ${\ui}_{|\Gamma}$ in $H^s(\Gamma)$ for all $s<\frac 1 2$ (see Theorem \ref{trace-thm}).
\\[1mm]
For proving Theorem \ref{M-cv-theta=0}, we need the following extension result, which is not available in the literature:
\begin{theorem}\label{ext-thm}
There exist a linear extension operator $\cF$ from $\{v\in H^1(\Omegai),~ v_{|\Gamma}\in H^{1/2}(\Gamma)\}$ to $H^1(D)$ and a constant $C>0$ such that for every $v\in H^1(\Omegai)$ with $v_{|\Gamma}\in H^{1/2}(\Gamma)\}$,
\begin{equation}\label{continuity}
{\Vert \tilde v \Vert}_{H^1(D)}^2 ~\leqs~ C \left( {\Vert v \Vert}_{H^1(\Omegai)}^2 + {\Vert v_{|\Gamma} \Vert}_{H^{1/2}(\Gamma)}^2 \right).
\end{equation}
\end{theorem}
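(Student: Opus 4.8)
The plan is to reduce the construction to a single application of the classical Lipschitz-extension theorem, by first filling in all the triangular holes. Recall that in this geometry $\Omegae = U \cup \bigcup_{\sigma\in\cA} f_\sigma(T)$, and set $\Omegai^+ = \mathrm{Interior}\big(\overline{\Omegai}\cup\bigcup_{\sigma\in\cA}\overline{f_\sigma(T)}\big)$, the domain obtained from $\Omegai$ by filling every hole. Since $\theta=0$ the similitudes $f_1,f_2$ are homotheties, so the two lateral sides of $\Omegai$ are straight segments ending at the endpoints of $\Gamma$, and once the holes are filled the region below the segment $\Gamma$ becomes solid up to $\Gamma$ itself; hence $\Omegai^+$ is a bounded Lipschitz (in fact trapezoidal, convex) domain, and is therefore a Calder\'on--Stein extension domain \cite{MR0143037,MR0290095}. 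Thus it suffices to build a bounded linear operator $v\mapsto v^+$ from $\{v\in H^1(\Omegai):v_{|\Gamma}\in H^{1/2}(\Gamma)\}$ into $H^1(\Omegai^+)$ with $v^+_{|\Omegai}=v$ and $\|v^+\|_{H^1(\Omegai^+)}^2\leqs C(\|v\|_{H^1(\Omegai)}^2+\|v_{|\Gamma}\|_{H^{1/2}(\Gamma)}^2)$; composing with the extension operator for $\Omegai^+$ and restricting to $D$ then gives $\cF$ and \eqref{continuity}. Note that $v^+_{|\Gamma}=v_{|\Gamma}$ (the apices meet $\Gamma$ only at isolated points), so the trace matches across $\Gamma$ and the resulting $\tilde v=\cF v$ genuinely lies in $H^1(D)$.

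To build $v^+$ I would work hole by hole, applying on each $f_\sigma(T)$ a bounded linear lifting $H^{1/2}(\partial f_\sigma(T))\to H^1(f_\sigma(T))$ to the trace of $v$ on the three sides of the triangle. By construction the edge-traces agree, so $v^+$ has no jump on the interior interfaces and belongs to $H^1(\Omegai^+)$, provided the boundary datum really is in $H^{1/2}(\partial f_\sigma(T))$. Away from the apex each triangle is flanked by a connected Lipschitz piece of $\Omegai$, so the two base corners cause no difficulty and, after subtracting a constant and applying a Poincar\'e--Wirtinger inequality on a collar $N_\sigma\subset\Omegai$, the lifting is controlled by $\|\nabla v\|_{L^2(N_\sigma)}$ and (for the lower-order term) by $\|v\|_{L^2(N_\sigma)}$, with constants uniform in $\sigma$ by rescaling, the Dirichlet energy being scale invariant in dimension two. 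The only obstruction is the apex $f_\sigma(A)\in\Gamma$: there the two lateral edge-traces come from the two distinct sub-branches $f_{\sigma1}(\Omegai)$ and $f_{\sigma2}(\Omegai)$, which communicate only far away, and there is no reason for them to agree at $f_\sigma(A)$. Matching them in $H^{1/2}$ is precisely what forces the hypothesis $v_{|\Gamma}\in H^{1/2}(\Gamma)$.

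The heart of the matter is therefore the quantitative control of the cross Gagliardo integral of the two lateral edge-traces $v_L,v_R$ near the apex,
\[
J_\sigma=\int\!\!\int\frac{|v_L(x)-v_R(y)|^2}{|x-y|^2}\,\dx\ell(x)\,\dx\ell(y).
\]
At $f_\sigma(A)$ the sub-domain $f_{\sigma1}(\Omegai)$ has a genuine Lipschitz wedge bounded by the lateral edge and by $f_{\sigma1}(\Gamma)\subset\Gamma$; a Poincar\'e inequality on a wedge neighbourhood shows that the mean of $v_L$ near the apex differs from $\langle v_{|\Gamma}\rangle_{f_{\sigma1}(\Gamma)}$ by at most $C\|\nabla v\|_{L^2}$ there, and symmetrically for $v_R$ and $f_{\sigma2}(\Gamma)$. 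This reduces the mismatch of the two edge-traces to the jump of $v_{|\Gamma}$ across the dyadic point $f_\sigma(A)$, the midpoint of $f_\sigma(\Gamma)$, and yields
\[
J_\sigma\leqs C\Big(\|\nabla v\|_{L^2(N_\sigma)}^2+2^{|\sigma|}\!\int_{2f_\sigma(\Gamma)}\big|v_{|\Gamma}-\langle v_{|\Gamma}\rangle_{2f_\sigma(\Gamma)}\big|^2\,\dx x\Big).
\]

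Finally I would sum over all $\sigma\in\cA$. The gradient (and $L^2$) terms sum to $C\|v\|_{H^1(\Omegai)}^2$ because the collars $N_\sigma$ have bounded overlap, while the oscillation terms are exactly the summands in the equivalent $\mathrm{Lip}_{1/2}^{2,2}(\Gamma)$ norm of Remark \ref{norme-Lip} (with $s=\tfrac12$, $p=2$, whence the weight $2^{skp}=2^{|\sigma|}$), so they sum to $C\|v_{|\Gamma}\|_{H^{1/2}(\Gamma)}^2$. Collecting the hole-by-hole estimates gives the required bound on $\|v^+\|_{H^1(\Omegai^+)}$, and the extension theorem for $\Omegai^+$ concludes. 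The main obstacle, and the place where both the geometry $\theta=0$ and the trace hypothesis are used decisively, is the apex analysis of the previous paragraph; everything else is a rescaling of standard Lipschitz trace and extension estimates.
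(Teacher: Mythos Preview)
Your global strategy coincides with the paper's: fill every triangular hole $f_\sigma(T)$, obtaining the convex hull $\widetilde\Omegai$ (your $\Omegai^+$), and then apply a standard Lipschitz extension. The paper, however, fills each hole not by an $H^{1/2}$ boundary lifting but via the Auscher--Badr cone-extension theorem (Theorem~\ref{auscher-badr}), which requires only that the \emph{antiradial part} satisfy $v_a/\rho\in L^2(C)$ on a thin double cone $C\subset\Omegai$ with vertex $A$. Lemma~\ref{lem2} supplies this bound as a \emph{multi-scale} sum,
\[
\int_C\Bigl|\frac{v_a}{\rho}\Bigr|^2\;\lesssim\;\sum_{k\geqs 1}2^k\!\int_{\gamma_k}\!|v-\langle v\rangle_{\gamma_k}|^2\,d\mu
\;+\;\sum_{k\geqs 1}\sum_{p\geqs k}\kappa^{p-k+1}\!\!\sum_{\tau\in\Sigma^k\cap\cA_p}\!\!\|\nabla v\|^2_{L^2(f_\tau(Y^0))},
\]
and the proof of Theorem~\ref{ext-thm} then sums these estimates over all holes, using a combinatorial no-repetition argument for the oscillation terms ($S_1\lesssim\|v_{|\Gamma}\|_{H^{1/2}}^2$ via \eqref{H-Lip}) and the geometric decay in $\kappa$ for the gradient terms ($S_2\lesssim\|\nabla v\|_{L^2(\Omegai)}^2$).

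Your per-hole estimate, by contrast, has a genuine gap. First, there is no ``Lipschitz wedge bounded by the lateral edge and by $f_{\sigma1}(\Gamma)$'' lying inside $\Omegai$: the region between the lateral edge of $f_\sigma(T)$ and $f_{\sigma1}(\Gamma)$ contains the holes $f_{\sigma1}f_2^{j}(T)$, $j\geqs0$, so a Poincar\'e inequality there is not available for $v\in H^1(\Omegai)$. The only solid wedge at the apex is the thin cone $C$ of the paper, and it touches $\Gamma$ only at $A$. Second, and more seriously, the displayed bound
\[
J_\sigma\;\leqs\;C\Bigl(\|\nabla v\|_{L^2(N_\sigma)}^2+2^{|\sigma|}\!\int_{2f_\sigma(\Gamma)}\!\bigl|v_{|\Gamma}-\langle v_{|\Gamma}\rangle_{2f_\sigma(\Gamma)}\bigr|^2\Bigr)
\]
with \emph{bounded-overlap} collars $N_\sigma$ and a \emph{single} oscillation term is false. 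Take $\sigma$ empty and, for large $m$, a function $w_m\in H^1(\Omegai)$ with $w_m|_\Gamma\equiv0$, $w_m\approx+1$ on $Y^{m,1}=f_1f_2^{m-1}(Y^0)$, $w_m\approx-1$ on $Y^{m,2}$, and $w_m\equiv0$ elsewhere below level $m$. Then the right-hand side vanishes (the gradient lives outside any fixed collar $N_\emptyset$ once $m$ is large, and the oscillation on $\Gamma$ is zero), whereas the left and right edge-traces of $T$ carry bumps of opposite sign at distance $\sim2^{-m}$ from the apex, so $J_\emptyset\gtrsim1$. The obstruction is that the apex mismatch is governed by \emph{all} dyadic scales approaching $f_\sigma(A)$, not just the scale $|\sigma|$. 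A correct bound must therefore carry a full tower of oscillation terms and gradient terms at finer scales, exactly as in Lemma~\ref{lem2}; once you do this, the summation over holes can no longer rely on bounded overlap and instead needs the paper's counting/weighted arguments for $S_1$ and $S_2$. In short, your outline is on the right track, but the single-scale shortcut at the apex does not survive, and repairing it leads back to the paper's multi-scale machinery.
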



Theorems \ref{auscher-badr} and \ref{lem-PT} below will play an important role in the proof of Theorem \ref{ext-thm}. We start by recalling an extension result for multiple cones from \cite{MR2790816}.

\begin{theorem}\label{auscher-badr}[see \cite{MR2790816}, Th. 5.1]
Call $C$ the double cone in $\R^2$ defined by $|x_1| <|x_2|$. 
Write $\rho(x)=\Vert x \Vert$ (the notation $\Vert \; . \;  \Vert$ stands for the euclidean norm), and, for every $v\in H^1(C)$, introduce the antiradial part $v_a$ of $v$ in the cone $C$, defined by
	$$v_a(x) ~=~ v(x) -  {\langle v \rangle}_{S_{\rho(x)}},$$
where for any $R>0$, $S_R=\{x\in C,~ \Vert x \Vert =R\}$, and ${\langle v \rangle}_{S_R}$ is the mean value of $v$ along the arc $S_R$.
\\
There exists a linear extension operator 
\begin{equation}
	\Lambda : \left\{v\in H^1(C),~ \frac{v_a}\rho \in L^2(C)\right\} \to H^1(\R^2),
\end{equation}
such that for every $v\in \{v\in H^1(C),~ {v_a}/\rho \in L^2(C)\}$,
$${\Vert \Lambda v \Vert}_{H^1(\R^2)} ~\leqs ~ c\left( {\Vert v \Vert}_{H^1(C)} + {\left\Vert \frac {v_a} \rho \right\Vert}_{L^2(C)} \right)$$
where $c>0$ is a constant independent of $v$.
\end{theorem}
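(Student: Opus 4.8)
The plan is to linearize the cusp at the origin by passing to logarithmic polar coordinates, which turns the double cone into a flat strip and makes the role of the weighted hypothesis transparent. Writing $x=(\rho\cos\phi,\rho\sin\phi)$ and setting $t=\log\rho$, I associate to $v$ the function $w(t,\phi)=v(e^t\cos\phi,e^t\sin\phi)$ on the strip $\Sigma=\R\times\Phi$, where $\Phi=(\pi/4,3\pi/4)\cup(5\pi/4,7\pi/4)$ is the angular section of $C$. Because the Dirichlet energy is conformally invariant in dimension two, one has $\Vert\nabla v\Vert_{L^2(C)}^2=\Vert\nabla w\Vert_{L^2(\Sigma)}^2$, while $\Vert v\Vert_{L^2(C)}^2=\int_\Sigma|w|^2e^{2t}\,dt\,d\phi$ carries the weight $e^{2t}=\rho^2$. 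Moreover the radial mean $\langle v\rangle_{S_\rho}$ becomes the sectional mean $m(t)=\langle w(t,\cdot)\rangle_\Phi$, so the antiradial part transforms into $w_a=w-m$ and the hypothesis reads $\Vert v_a/\rho\Vert_{L^2(C)}^2=\int_\Sigma|w_a|^2\,dt\,d\phi<\infty$, i.e. $w_a\in L^2(\Sigma)$ \emph{without} weight. Thus the three quantities I may use are the flat Dirichlet norm of $w$, the weighted $L^2$ norm of $w$, and the flat $L^2$ norm of $w_a$.

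Next I split $w=m+w_a$ and extend the two pieces by different mechanisms. The mean $m$ depends on $t$ only, so its inverse image $\langle v\rangle_{S_\rho}$ is radially symmetric and already defined on all of $\R^2$; a direct computation using Jensen's inequality (averaging does not increase $L^2$ norms, and the radial derivative of the average is the average of the radial derivative) gives $\Vert\langle v\rangle_{S_\cdot}\Vert_{H^1(\R^2)}\leqs C\Vert v\Vert_{H^1(C)}$. The essential point is that this radial extension is controlled in the \emph{weighted} norm, so it tolerates the fact that $m$ need not decay as $t\to-\infty$ (i.e. $v$ need not vanish at the origin): the weight $e^{2t}$ suppresses that region.

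The core of the argument is the extension of the oscillating part $w_a$, which by hypothesis lies in $H^1(\Sigma)$ for the flat metric (one checks $\Vert\nabla w_a\Vert_{L^2(\Sigma)}\leqs\Vert\nabla w\Vert_{L^2(\Sigma)}$ since $\partial_\phi w_a=\partial_\phi w$ and $\partial_t w_a$ is the angular oscillation of $\partial_t w$, together with $\Vert w_a\Vert_{L^2(\Sigma)}=\Vert v_a/\rho\Vert_{L^2(C)}$). I extend $w_a$ from the two bands of $\Sigma$ across the straight lines $\phi=\pm\pi/4,\pm3\pi/4$ into the two complementary sectors by even reflection in $\phi$, multiplied by a cut-off $\eta(\phi)$ that equals $1$ on the bounding line and vanishes before the middle of each complementary sector. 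Reflection and the cut-off act only on $\phi$ at fixed $t$, hence they are \emph{local in $t$}: this keeps the weight $e^{2t}$ untouched and yields at once the weighted bound $\int|\eta\,Rw_a|^2e^{2t}\leqs C\int_\Sigma|w_a|^2e^{2t}\leqs C\Vert v\Vert_{L^2(C)}^2$, where $Rw_a$ denotes the reflection of $w_a$. For the flat Dirichlet energy, the reflected gradient is controlled by $\Vert\nabla w_a\Vert_{L^2(\Sigma)}^2$, while the only new contribution comes from differentiating the cut-off and equals, up to a constant, $\int|\eta'|^2|Rw_a|^2\leqs C\Vert w_a\Vert_{L^2(\Sigma)}^2=C\Vert v_a/\rho\Vert_{L^2(C)}^2$. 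Since $\eta=1$ on each bounding line the traces match, so the glued function belongs to $H^1$ across the rays.

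Finally I define $\Lambda v$ as the inverse logarithmic-polar transform of the extended $m$ together with the extended $w_a$; by construction $\Lambda v=v$ on $C$, and summing the estimates above yields \eqref{continuity}. The transform produces an element of $H^1(\R^2\setminus\{0\})$ with finite norm, and since a single point has zero $H^1$-capacity in the plane it is removable, so $\Lambda v\in H^1(\R^2)$. The main obstacle is this last step: organizing the extension of the oscillating part so that the gradient created near the pinch point is controlled, and it is exactly here that the hypothesis $v_a/\rho\in L^2(C)$ is used in an essential way — the cut-off gradient term is bounded by $\Vert w_a\Vert_{L^2(\Sigma)}=\Vert v_a/\rho\Vert_{L^2(C)}$, and without this control the reflection across the cusp at the origin would cost infinite energy (as the example of a function equal to $+1$ on the upper cone and $-1$ on the lower cone already shows, its antiradial part satisfying $v_a/\rho\notin L^2$). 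The remaining technical care lies in the conformal change-of-variables identities and the weighted bookkeeping, which are routine.
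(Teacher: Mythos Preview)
The paper does not prove Theorem~\ref{auscher-badr}; it is quoted from \cite{MR2790816} (Theorem~5.1 there) and used as a black box in the proof of Theorem~\ref{ext-thm}. There is therefore no argument in the paper to compare yours against.

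That said, your proof is correct and self-contained. The logarithmic--polar change of variables straightens the cone to a flat strip and makes the role of the hypothesis transparent: the radial mean $m$ extends for free by rotational symmetry and is controlled by $\|v\|_{H^1(C)}$ alone (the weight $e^{2t}$ takes care of the behaviour near the vertex), while the antiradial part $w_a$, once known to lie in the \emph{unweighted} $L^2$ of the strip, can be extended by angular reflection with a cut-off, the cut-off--gradient term being absorbed precisely by $\|w_a\|_{L^2(\Sigma)}=\|v_a/\rho\|_{L^2(C)}$. The gluing across the four rays is correct since $\eta=1$ on the bounding lines, and the removability of the origin for $H^1(\R^2)$ is standard. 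Your construction is manifestly linear and, as written, also has the property recorded in Remark~\ref{rem-AB}: if $v$ is radial (resp.\ constant) in $C\cap B(0,R)$ then $w_a=0$ for $t<\log R$, so the reflected piece vanishes there and $\Lambda v$ is radial (resp.\ constant) in $B(0,R)$ --- which is exactly what the paper needs later when building $\cF_0$.
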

\noindent

\begin{remark}\label{rem-AB}
The construction in \cite{MR2790816} is such that if $v$ is radial (\textit{resp.} constant) in $C\cap B(0,R)$, then $\Lambda v$ is radial (\textit{resp.} constant) in $B(0,R)$.
\end{remark}

As mentioned in \cite{MR2790816}, Theorem \ref{auscher-badr} can be immediately extended in $\R^n$ to the case of a union of two half-cones sharing the same vertex, separated by a hyperplane passing through the vertex and not containing any direction of the boundaries.

\begin{theorem}[Peetre-Tartar]\label{lem-PT}[see \cite{MR0221282,Tartar}]
Let $E,E_1,E_2,F$ be Banach spaces, and let $A_i$, $i=1,2$, be continuous linear operators from $E$ to $E_i$, and suppose $A_1$ is compact. Further assume that there exists a constant $c_0>0$ such that for any $v\in E$,
\begin{equation}\label{cond-PT}
	{\Vert v\Vert}_E ~\leqs ~ c_0({\Vert A_1 v\Vert}_{E_1} + {\Vert A_2 v\Vert}_{E_2}).
\end{equation}
If $L$ is a coninuous linear operator from $E$ to $F$ such that $L_{|\ker A_2} \equiv 0$, then there exists a constant $c_1>0$ such that for any $v\in E$,
\begin{equation}\label{PT}
	{\Vert Lv \Vert}_F ~\leqs~ c_1 {\Vert A_2 v \Vert}_{E_2}.
\end{equation}
\end{theorem}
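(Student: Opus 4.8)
The plan is to prove this by the classical contradiction argument, whose two structural ingredients are the compactness of $A_1$ (exploited through the \emph{a priori} estimate \eqref{cond-PT}) and the inclusion $\ker A_2\subseteq \ker L$ provided by the hypothesis $L_{|\ker A_2}\equiv 0$. Before running the contradiction, I would first record that $N:=\ker A_2$ is finite-dimensional. Indeed, restricted to $N$, inequality \eqref{cond-PT} reads $\Vert v\Vert_E\leqs c_0\Vert A_1 v\Vert_{E_1}$, so $A_1$ is bounded below on $N$; since $A_1$ is compact, the image of the closed unit ball of $N$ is relatively compact, and applying the (continuous) inverse of $A_1\!\mid_N$ shows that the unit ball of $N$ is itself relatively compact, whence $\dim N<\infty$ by Riesz's theorem. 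As a finite-dimensional subspace, $N$ is closed and admits a closed complement $M$, so that $E=N\oplus M$. Because $N=\ker A_2$ and $N\subseteq\ker L$, both $A_2 v$ and $Lv$ depend only on the $M$-component of $v$, and it therefore suffices to establish \eqref{PT} for $v\in M$, on which $A_2$ is injective.

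I would then argue on $M$ by contradiction: if \eqref{PT} failed there, one could find $v_n\in M$ with $\Vert Lv_n\Vert_F=1$ and $\Vert A_2 v_n\Vert_{E_2}\to 0$. The first task is to show that $(v_n)$ is bounded in $E$. If it were not, set $w_n=v_n/\Vert v_n\Vert_E$, so that $\Vert w_n\Vert_E=1$, $\Vert A_2 w_n\Vert_{E_2}\to 0$ and $\Vert Lw_n\Vert_F=1/\Vert v_n\Vert_E\to 0$. Compactness of $A_1$ allows extracting a subsequence along which $(A_1 w_n)$ is Cauchy in $E_1$; applying \eqref{cond-PT} to $w_n-w_m$ and using that $\Vert A_2(w_n-w_m)\Vert_{E_2}\to 0$ forces $(w_n)$ to be Cauchy in $E$, hence $w_n\to w\in M$ with $\Vert w\Vert_E=1$ and $A_2 w=\lim A_2 w_n=0$. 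This gives $w\in M\cap\ker A_2=\{0\}$, contradicting $\Vert w\Vert_E=1$. Consequently $(v_n)$ is bounded, and the very same compactness-plus-\eqref{cond-PT} mechanism yields (up to a subsequence) that $(v_n)$ is Cauchy, $v_n\to v\in M$ with $A_2 v=0$, so $v=0$; but continuity of $L$ gives $\Vert Lv\Vert_F=\lim\Vert Lv_n\Vert_F=1$, the desired contradiction.

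I expect the main obstacle to be precisely the boundedness step, since \eqref{cond-PT} controls $\Vert v\Vert_E$ only through $\Vert A_1 v\Vert_{E_1}+\Vert A_2 v\Vert_{E_2}$ while $A_1 v$ is a priori uncontrolled; the two-stage normalization (rescaling by $\Vert v_n\Vert_E$ to eliminate the unbounded case, then by $\Vert Lv_n\Vert_F$) is what circumvents it. An equivalent and slightly more economical route would avoid the explicit complement $M$ altogether: replace each $v_n$ by a near-minimizer of $\Vert v_n-k\Vert_E$ over $k\in\ker A_2$ (which changes neither $A_2 v_n$ nor $Lv_n$) and run the same dichotomy on $\operatorname{dist}(v_n,\ker A_2)$, using only that $\ker A_2$ is closed. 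Once boundedness is secured, the remainder is the standard ``compact operator plus a lower bound on a complement gives closed range / Cauchy sequences'' packaging.
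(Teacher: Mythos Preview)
Your argument is correct and is essentially the standard proof of the Peetre--Tartar lemma. Note, however, that the paper does not supply its own proof of this statement: Theorem~\ref{lem-PT} is quoted from the literature (the references to Peetre and Tartar) and used as a black box in the proof of Lemma~\ref{lem1}, so there is no proof in the paper to compare yours against. Your write-up would serve perfectly well as a self-contained justification, and the alternative you sketch at the end (working with $\operatorname{dist}(v_n,\ker A_2)$ instead of an explicit complement) is the other common packaging of the same idea.
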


\paragraph{Notations}
We start by introducing notations for the proof of Theorem \ref{ext-thm}.
\\[1mm]
We first introduce a domain $C$ which is the union of two truncated half-cones included in $\Omegai$, whose common vertex is the point $A$.
Recall that $T$ is the main hole of the domain $\Omegai$. Call $\ph_0\in (0,\frac \pi 2)$ the upper half-angle of the triangular domain $T$ (see Figure \ref{cone}), and take $\ph_1>\ph_0$. Call $\mathcal{C}$ the half-cone whose boundary is made of the two half-lines through $A$ with respective angles $\ph_0$ and $\ph_1$ with the vertical axis (see Figure \ref{cone}). Call $C_2 = \mathcal{C}\cap (\tildeOmegai\setminus \overline{Y^0})$. We can assume that $\ph_1>\ph_0$ is small enough so that $C_2\subset \Omegai$, in other words $C_2$ does not intersect any of the holes of $\Omegai$. We define $C_1$ to be the symmetric of $C_2$ with respect to the vertical axis $x_2=0$, and we write $C=C_1\cup C_2$.
%
\\[1mm]
We also introduce the sets $Y^{k,1} := f_1\circ {f_2}^{k-1}(Y^0)$ and $Y^{k,2} := f_2\circ {f_1}^{k-1}(Y^0)$ for every $k\geqs 1$ (see Figure \ref{cone}), and we write $Y^k := Y^{k,1}\cup Y^{k,2}$. We also note $\gamma := {f_1}^2(\Gamma)\cup {f_2}^2(\Gamma)$ (see Figure \ref{Yki}).

\begin{figure}[h!]
   \centering\[
\scalebox{0.55}{\input{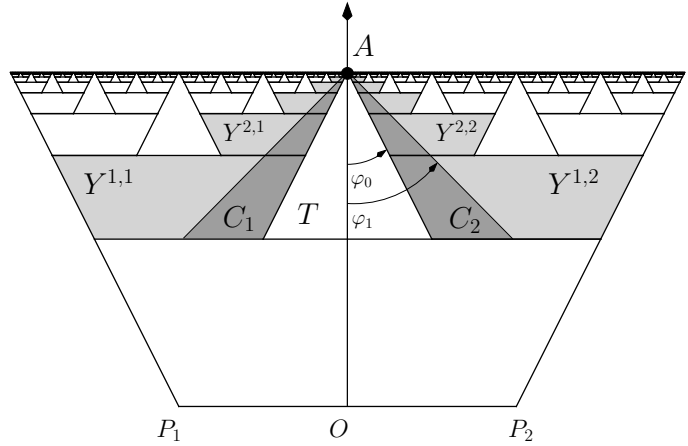}}
  \]
   \caption{The region $\cC=C_1\cup C_2$ and the domains $Y^{k,i}$, $k,i=1,2$.}
   \label{cone}
 \end{figure}

\noindent
Call $\Omega^{1,1}:=f_1\big(\Omegai\setminus \overline{f_2(\Omegai)}\big)$ and $\Omega^{1,2}:=f_2\big(\Omegai\setminus \overline{f_1(\Omegai)}\big)$ (see Figure \ref{Yki}).
We introduce the sets $\Omega^{k,i}$ defined by $\Omega^{k,i}:=g^{k-1}(\Omega^i)$, $k\geqs 1$, $i=1,2$, where $g$ is the homothety centered at $A$ with ratio $1/2$, see Figure \ref{Yki}.

\begin{figure}[h!]
\begin{textblock}{6}(8.5,2) $\gamma = {f_1}^2(\Gamma)\cup {f_2}^2(\Gamma)$ \end{textblock}
   \centering\[
\scalebox{0.5}{\input{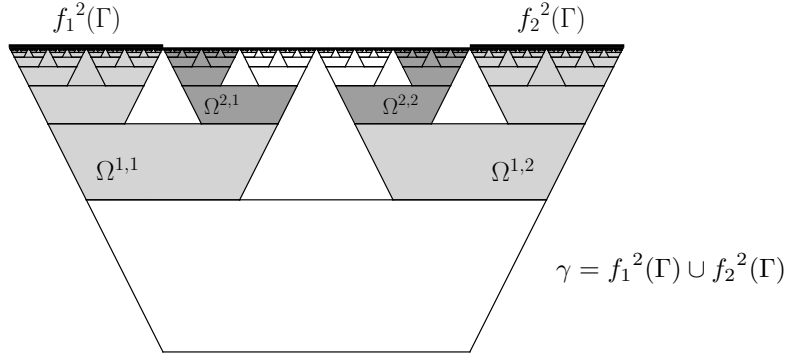}} 
  \]
   \caption{The domains $\Omega^{k,i}$, $k,i=1,2$ (in light grey: $\Omega^{1,1}$ and $\Omega^{1,2}$, in dark grey: $\Omega^{2,1}$ and $\Omega^{2,2}$) and the sets ${f_i}^2(\Gamma)$, $i=1,2$.}
   \label{Yki}
 \end{figure}

\noindent
For every integer $k\geqs 1$, we introduce $\Sigma^k := \{\tau\in \cA,~ f_\tau(Y^0) \subset \Omega^{k,1}\cup\Omega^{k,2}\}$, with the notations of \S \ref{def}.
\\[1mm]
Take $\kappa\in (1/2,1)$. We introduce the space $G=\{v\in L^1_\text{loc}(\Omega^{1,1}\cup \Omega^{1,2}),~ {\Vert v \Vert}_G<\infty\}$, where
\begin{equation}\label{normG}
{\Vert v \Vert}_G^2 = \sum_{m\geqs 1} \kappa^m \sum_{\tau \in \Sigma^1\cap \cA_m} \int_{f_\tau(Y^0)} {|v |}^2 \dx x.
\end{equation}
Endowed with the norm ${\Vert \cdot \Vert}_G$, the space $G$ is a Hilbert space.
\\[1mm]
We also introduce the space $H=\{v\in L^2(\Omega^{1,1}\cup \Omega^{1,2}),~ \nabla v\in G\}$, which is a Hilbert space with the norm
\begin{displaymath}
   \left(  {\|\nabla v\|}^2 _G +  {\|v\|}^2 _{L^2(\Omega^{1,1}\cup \Omega^{1,2})}  
   \right)^{\frac 1 2}.
\end{displaymath}
Moreover, from Theorem~\ref{SPI}, we see that $v\in H\mapsto v_{|\gamma}$ is a continuous operator from 
$H$ to $L^2_\mu (\gamma)$.  Arguing by contradiction, we can  show that 
\begin{displaymath}
  \left(  {\|\nabla v\|}^2_G +  {\|v_{|\gamma}\|}^2 _{L^2 _\mu (\gamma)}  \right)^{\frac 1 2}
\end{displaymath}
is an equivalent norm on $H$.
\\[2mm]
We first state and prove two lemmas which will be useful in the proof of Theorem \ref{ext-thm}.

\begin{lemma}\label{lem1}
There exists a constant $c>0$ such that for every $v\in H$,
\begin{equation}
\int_{Y^1} \left({|v(x)-{\langle v \rangle}_{Y^{1,1}}|}^2 + {|v(x)-{\langle v \rangle}_{Y^{1,2}}|}^2\right)\dx x
~\leqs~ c\left(\int_\gamma {|v-{\langle v \rangle}_\gamma|}^2\dx \mu + {\Vert \nabla v\Vert}^2_G \right). \label{eq-pt}
\end{equation}
\end{lemma}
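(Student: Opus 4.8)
The plan is to obtain \eqref{eq-pt} as a direct application of the Peetre--Tartar lemma (Theorem~\ref{lem-PT}) with $E=H$. I would set $F=L^2(Y^1)\times L^2(Y^1)$ and define $L\colon H\to F$ by $Lv=\bigl((v-\langle v\rangle_{Y^{1,1}})_{|Y^1},(v-\langle v\rangle_{Y^{1,2}})_{|Y^1}\bigr)$, so that ${\|Lv\|}_F^2$ is exactly the left-hand side of \eqref{eq-pt}; since $Y^1\subset\Omega^{1,1}\cup\Omega^{1,2}$ and the averages $\langle\cdot\rangle_{Y^{1,i}}$ are continuous on $H$, the operator $L$ is continuous. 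For the right-hand side I would take $E_2=(G\times G)\times L^2_\mu(\gamma)$ and $A_2v=\bigl(\nabla v,(v-\langle v\rangle_\gamma)_{|\gamma}\bigr)$, so that ${\|A_2v\|}_{E_2}^2={\|\nabla v\|}_G^2+\int_\gamma|v-\langle v\rangle_\gamma|^2\dx\mu$ is the right-hand side; here the continuity of $v\mapsto v_{|\gamma}$ from $H$ to $L^2_\mu(\gamma)$ is provided by Theorem~\ref{SPI}. Finally I would take $E_1=\R$ and $A_1v=\langle v\rangle_\gamma$, which is a rank-one, hence compact, operator.

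To verify the a priori bound \eqref{cond-PT}, I would expand $\int_\gamma|v|^2\dx\mu=\int_\gamma|v-\langle v\rangle_\gamma|^2\dx\mu+\mu(\gamma){|\langle v\rangle_\gamma|}^2$ and use the norm on $H$ equivalent to $\bigl({\|\nabla v\|}_G^2+{\|v_{|\gamma}\|}_{L^2_\mu(\gamma)}^2\bigr)^{1/2}$ recalled just above the lemma. This gives a constant $c_0$ with ${\|v\|}_H\leqs c_0\bigl(|\langle v\rangle_\gamma|+{\|A_2v\|}_{E_2}\bigr)$, which is precisely \eqref{cond-PT}.

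The heart of the matter is to show that $L$ vanishes on $\ker A_2$. Let $v\in\ker A_2$. Because the weight $\kappa^m$ is strictly positive on each cell $f_\tau(Y^0)$, $\tau\in\Sigma^1\cap\cA_m$, the vanishing of ${\|\nabla v\|}_G$ forces $\nabla v=0$ almost everywhere on $\bigcup_{\tau\in\Sigma^1}f_\tau(Y^0)$, which tiles $\Omega^{1,1}\cup\Omega^{1,2}$ up to a null set. As $\Omega^{1,1}$ and $\Omega^{1,2}$ are connected, $v$ equals a constant $a$ a.e.\ on $\Omega^{1,1}$ and a constant $b$ a.e.\ on $\Omega^{1,2}$. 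The second condition $(v-\langle v\rangle_\gamma)_{|\gamma}=0$ says that the trace of $v$ equals $\langle v\rangle_\gamma$ $\mu$-a.e.\ on $\gamma={f_1}^2(\Gamma)\cup{f_2}^2(\Gamma)$; since ${f_1}^2(\Gamma)\subset\partial\Omega^{1,1}$ and ${f_2}^2(\Gamma)\subset\partial\Omega^{1,2}$, those traces are respectively $a$ and $b$, and as both pieces carry positive $\mu$-measure I conclude $a=b=\langle v\rangle_\gamma$, i.e.\ $v$ is globally constant. Then $\langle v\rangle_{Y^{1,1}}=\langle v\rangle_{Y^{1,2}}=a$ and $Lv=0$. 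Theorem~\ref{lem-PT} then yields ${\|Lv\|}_F\leqs c_1{\|A_2v\|}_{E_2}$, which is \eqref{eq-pt} after squaring.

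I expect the kernel identification to be the delicate step: it combines the propagation of $\nabla v=0$ from the $G$-seminorm to all of $\Omega^{1,1}\cup\Omega^{1,2}$ (using that the cells $f_\tau(Y^0)$, $\tau\in\Sigma^1$, tile these domains and that each carries a positive weight) with the two geometric facts that each $\Omega^{1,i}$ is connected and that ${f_i}^2(\Gamma)$ lies in its boundary --- it is the latter that couples the two constants $a$ and $b$ through the single average $\langle v\rangle_\gamma$. A direct compactness argument by contradiction, in the spirit of the equivalence of norms on $H$ established just above, would give the same conclusion but amounts to re-proving Peetre--Tartar.
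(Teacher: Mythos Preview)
Your proof is correct and follows essentially the same route as the paper's: an application of the Peetre--Tartar lemma with $A_1 v=\langle v\rangle_\gamma$, $A_2 v=(\nabla v,\,v_{|\gamma}-\langle v\rangle_\gamma)$ and $L$ as you define it, the only cosmetic difference being that the paper works on the graph space $E=\{(v,w)\in H\times L^2_\mu(\gamma):\ v_{|\gamma}=w\}$ rather than directly on $H$. Your identification of $\ker A_2$ is in fact more carefully written than the paper's, which merely asserts that $(v,w)\in\ker A_2$ forces $v$ to be constant on each $\Omega^{1,i}$ and that $L_{|\ker A_2}\equiv 0$ is then ``obvious'', leaving implicit the step (which you spell out) that the trace condition on $\gamma$ couples the two constants.
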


\begin{proof}
We introduce the Hilbert space $E = \{(v,w)\in H\times L^2_\mu(\gamma),~ v_{|\gamma} = w\}$, endowed with the norm ${\Vert \cdot \Vert}_E$ given by ${\Vert (v,w)\Vert}_E^2 ={\Vert \nabla v \Vert}_G^2 + {\Vert w \Vert}_{L^2_\mu(\gamma)}^2$. 
\\[1mm]
We now introduce the operators 
\begin{eqnarray*}
&&A_1 : (v,w)\in E \mapsto {\langle v \rangle}_{\gamma},\\[1mm]
&&A_2 : (v,w)\in E \mapsto (\nabla v, w-{\langle w \rangle}_\gamma),\\[1mm]
&&L : (v,w)\in E \mapsto (v-{\langle v \rangle}_{Y^{1,1}}, v-{\langle v \rangle}_{Y^{1,2}}).
\end{eqnarray*}
With the notations of Theorem \ref{lem-PT}, $E_1=\R$, $E_2=G \times L^2_\mu(\gamma)$ and $F={L^2(Y^{1})}^2$. It is easily seen that $A_1$, $A_2$ and $L$ are continuous linear operators, and $A_1$ is compact. Moreover, \eqref{cond-PT} is clearly satisfied.
\\
Observe that $(v,w)\in E$ lies in $\ker A_2$ if and only if $v$ is constant in $\Omega^{1,1}$ and in $\Omega^{1,2}$. Hence, it is obvious that $L_{|\ker A_2}\equiv 0$. From this, we deduce by Theorem \ref{lem-PT} that there exists a constant $c>0$ such that ${\Vert L(v,w) \Vert}_F ~\leqs~ c {\Vert A_2 (v,w) \Vert}_{E_2}$ for all $(v,w)\in E$, which yields \eqref{eq-pt}.
\end{proof}

\begin{lemma}\label{lem2}
Assume that $v\in H^1(\Omegai)$ and $v_{|\Gamma}\in H^{1/2}(\Gamma)$, then $\frac {v_a}\rho \in L^2(C)$, and
\begin{eqnarray*}
\int_C {\left| \frac {v_a}\rho \right|}^2 \dx x 
&\leqs & c \left(  \sum_{k\geqs 1} 2^k \int_{\gamma_k} {|v-{\langle v \rangle}_{\gamma_k}|}^2\dx \mu 
+ \sum_{k\geqs 1} \sum_{p\geqs k} \kappa^{p-k+1} \sum_{\tau \in \Sigma^k\cap \cA_p} \int_{f_\tau(Y^0)} {|\nabla v|}^2\dx x \right) \label{eq-lem2}
\end{eqnarray*}
for some constant $c>0$ independent of $v$, where $\gamma^k=g^{k-1}(\gamma)$ (recall that $g$ is the homothety centered at $A$, with ratio $1/2$).
\end{lemma}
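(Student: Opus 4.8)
The plan is to place the common vertex $A$ at the origin, so that $\rho(x)=\|x\|$ is the distance to $A$, and to exploit the invariance of the double cone under the homothety $g$. First I would slice $C$ into the dyadic annular pieces $C^k:=g^{k-1}(C^1)$, where $C^1=\{x\in C,\ R_0/2\leqs\rho(x)<R_0\}$ and $R_0$ is the outer radius of $C$; these cover $C$ up to the vertex, and on $C^k$ one has $\rho(x)\simeq 2^{-k}$, hence $\rho(x)^{-2}\leqs c\,4^k$. For a fixed radius the whole arc $S_\rho$ lies in a single $C^k$, and since $\langle v\rangle_{S_\rho}$ is the best constant approximation of $v$ on $S_\rho$ in $L^2$, for any constant $m^k$ we get $\int_{C^k}|v_a|^2\dx x\leqs\int_{C^k}|v-m^k|^2\dx x$. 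It therefore suffices to establish, for a convenient choice of $m^k$, the per-level bound
\begin{equation*}
\int_{C^k}|v-m^k|^2\dx x~\leqs~ c\Big(2^{-k}\int_{\gamma_k}|v-\langle v\rangle_{\gamma_k}|^2\dx\mu+4^{-k}\,\|\nabla v\|_{G_k}^2\Big),
\end{equation*}
where I write $\|\nabla v\|_{G_k}^2:=\sum_{p\geqs k}\kappa^{p-k+1}\sum_{\tau\in\Sigma^k\cap\cA_p}\int_{f_\tau(Y^0)}|\nabla v|^2\dx x$. Multiplying by $4^k$ and summing over $k\geqs1$ then reproduces exactly the two terms on the right-hand side of the announced inequality, since $2^{-k}4^k=2^k$ and $4^{-k}4^k=1$.

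Next I would prove this per-level bound by pulling back to level $1$ through the substitution $w=v\circ g^{k-1}$, writing $\lambda=2^{-(k-1)}$ for the contraction ratio of $g^{k-1}$. Three scalings must be tracked. The Lebesgue measure contributes $\lambda^2$, so $\int_{C^k}|v-m^k|^2\dx x=\lambda^2\int_{C^1}|w-m^1|^2\dx x$, with $m^1$ the value of $m^k$ read on $w$. The Dirichlet integral is conformally invariant in dimension two, so $\int_{f_{\tau'}(Y^0)}|\nabla w|^2\dx x=\int_{g^{k-1}(f_{\tau'}(Y^0))}|\nabla v|^2\dx x$; as $g^{k-1}$ sends a cell of $\Sigma^1\cap\cA_m$ onto a cell of $\Sigma^k\cap\cA_{m+k-1}$, this gives $\|\nabla w\|_G^2=\|\nabla v\|_{G_k}^2$ with \emph{no} extra power of $\lambda$. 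Finally, because $g^{k-1}$ maps $\gamma$ onto $\gamma_k$ and the self-similar measure satisfies $\mu(\gamma_k)=\lambda\,\mu(\gamma)$, one checks $\langle w\rangle_\gamma=\langle v\rangle_{\gamma_k}$ and $\int_\gamma|w-\langle w\rangle_\gamma|^2\dx\mu=\lambda^{-1}\int_{\gamma_k}|v-\langle v\rangle_{\gamma_k}|^2\dx\mu$. Thus everything reduces to the single level-$1$ inequality, valid for all $u\in H$,
\begin{equation*}
\int_{C^1}|u-\langle u\rangle_{Y^{1,1}}|^2\dx x~\leqs~ c\Big(\int_\gamma|u-\langle u\rangle_\gamma|^2\dx\mu+\|\nabla u\|_G^2\Big),
\end{equation*}
and the bookkeeping $4^k\lambda^2=4$ together with $4^k\lambda^2\lambda^{-1}=4\lambda^{-1}\simeq 2^k$ confirms that the weights come out as claimed.

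The level-$1$ inequality is where Lemma \ref{lem1} enters, applied to $w$ (which lies in $H$ because $\|\nabla v\|_{G_k}^2\leqs\frac{\kappa}{1-\kappa}\|\nabla v\|_{L^2(\Omegai)}^2<\infty$). Taking $m^1=\langle u\rangle_{Y^{1,1}}$ and splitting $C^1=C_1^1\cup C_2^1$ along the vertical axis, on the wing $C_1^1$ a Poincar\'e inequality over a fixed connected level-$1$ region containing both $C_1^1$ and the cell $Y^{1,1}$ gives $\int_{C_1^1}|u-\langle u\rangle_{Y^{1,1}}|^2\leqs c\,\|\nabla u\|_G^2$, the cells covering $C_1^1$ and the cell $Y^{1,1}=f_1(Y^0)$ all belonging to $\Sigma^1$. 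On the opposite wing I would write $u-\langle u\rangle_{Y^{1,1}}=(u-\langle u\rangle_{Y^{1,2}})+(\langle u\rangle_{Y^{1,2}}-\langle u\rangle_{Y^{1,1}})$: the first term is handled by the same intra-wing Poincar\'e inequality between $C_2^1$ and $Y^{1,2}$, while the between-wing gap is estimated by
\begin{equation*}
|\langle u\rangle_{Y^{1,2}}-\langle u\rangle_{Y^{1,1}}|^2~\leqs~\frac{2}{|Y^{1,2}|}\Big(\int_{Y^{1,2}}|u-\langle u\rangle_{Y^{1,2}}|^2\dx x+\int_{Y^{1,2}}|u-\langle u\rangle_{Y^{1,1}}|^2\dx x\Big),
\end{equation*}
whose two integrals are both dominated by the left-hand side of \eqref{eq-pt}, hence by $c\big(\int_\gamma|u-\langle u\rangle_\gamma|^2\dx\mu+\|\nabla u\|_G^2\big)$ via Lemma \ref{lem1}.

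Summing the geometric series over $k\geqs1$ yields the stated inequality; in particular $v_a/\rho\in L^2(C)$, the right-hand side being finite because the gradient sum converges thanks to $\kappa<1$ and $v\in H^1(\Omegai)$, and because the trace sum $\sum_k 2^k\int_{\gamma_k}|v-\langle v\rangle_{\gamma_k}|^2\dx\mu$ is controlled by $\|v_{|\Gamma}\|_{H^{1/2}(\Gamma)}^2$ through the norm equivalence of Remark \ref{norme-Lip}. I expect the main obstacle to be twofold: first, the delicate matching of the three competing scalings (the area factor $\lambda^2$, the conformal invariance of the Dirichlet integral, and the factor $\lambda^{-1}$ of the self-similar measure) against $\rho^{-2}\simeq 4^k$, which must conspire to produce precisely the weights $2^k$ and $\kappa^{p-k+1}$; and second, the geometric verification that at each scale the cone wings $C_i^k$ and the cells $Y^{k,i}$ sit in a common connected region admitting a Poincar\'e inequality with a constant uniform in $k$ — a fact which, as for Lemma \ref{lem1}, ultimately rests on the self-similarity of the construction near $A$.
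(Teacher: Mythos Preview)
Your approach is correct and takes a genuinely different route from the paper's. The paper slices $C$ by the cells $Y^k$ (using $C\subset\bigcup_k Y^k$) and then splits $v_a=(v-\langle v\rangle_{Y^k})+(\langle v\rangle_{Y^k}-\langle v\rangle_{S_\rho})$: the first piece gives a term $I_1$, handled by Lemma~\ref{lem1} applied to $v\circ g^{k-1}$ exactly as you do; the second piece $I_2$ is further split via an intermediate half-arc average $\langle v\rangle_{S^i_{\rho_k}}$ at a fixed reference radius $\rho_k$, and is then estimated by a Poincar\'e inequality on $Y^{1,i}$ together with an explicit polar-coordinate computation bounding $|\langle v\rangle_{S^i_{\rho(x)}}-\langle v\rangle_{S^i_{\rho_k}}|$ by the radial derivative of $v$. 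Your key simplification is the observation that, since $\langle v\rangle_{S_\rho}$ minimises $\int_{S_\rho}|v-m|^2$ over constants $m$, one may replace the $\rho$-dependent arc mean by a single constant $m^k$ on each radial annulus; this collapses the paper's $I_1/I_2/J_1/J_2$ hierarchy into one per-level estimate and eliminates the polar integration entirely. What you lose is some flexibility: the minimising trick forces you to slice by genuine $\rho$-annuli, whereas the paper may work directly on the trapezoidal cells.

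One point you flag as an obstacle does need sharpening: the claim that ``the cells covering $C_1^1$ \dots\ all belong to $\Sigma^1$'' is not quite right as stated. The dyadic annulus $\{R_0/2\leqs\rho<R_0\}\cap C_1$ is bounded by circular arcs centred at $A$, while the interface between $Y^{1,1}$ and $Y^{2,1}$ is the horizontal segment $f_1f_2(\Gamma^0)$; these do not coincide, so $C_1^1$ will in general meet $Y^{2,1}$, whose index $(1,2)$ lies in $\Sigma^2$, not in $\Sigma^1$. Hence a Poincar\'e inequality relating $C_1^1$ to $Y^{1,1}$ cannot be controlled purely by $\|\nabla u\|_G^2$. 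The fix is painless: $C_1^1$ is covered by a \emph{fixed finite} number of cells $Y^{j,1}$, say $1\leqs j\leqs M$, so your level-$1$ inequality holds with $\|\nabla u\|_G^2$ replaced by a finite sum $\sum_{j=1}^{M}\|\nabla u\|_{G_j}^2$ (in your notation). After scaling and summing over $k$ this reproduces the stated right-hand side up to a multiplicative constant depending only on $M$ and $\kappa$, since each cell $f_\tau(Y^0)$ then appears in at most $M$ of the level-$k$ contributions.
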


\begin{proof}
We first observe that, by self-similarity, there exists a constant $c_1>0$ such that for all $x\in Y^k$, $\rho(x)\geqs \frac {c_1}{2^k}$. Therefore,
$$\int_\cC {\left|{\frac{v_a}{\rho}}\right|}^2 \dx x ~\leqs~ c_1 \sum_{k\geqs 1} 2^{2k} \int_{Y^k} {|v_a|}^2 \dx x,$$
since $\cC \subset \bigcup_{k\geqs 1} Y^k$ by construction. Hence, there is a constant $c_2>0$ such that $\ds \int_\cC {\left|\frac{v_a}\rho\right|}^2\dx x \leqs c_2(I_1+I_2)$, where
\begin{eqnarray}
I_1 &=& \sum_{k\geqs 1} 2^{2k}\int_{Y^k} {|v(x)-{\langle v \rangle}_{Y^k}|}^2 \dx x,\\
I_2 &=& \sum_{k\geqs 1} 2^{2k}\int_{Y^k}  {|{\langle v \rangle}_{Y^{k}} -{\langle v\rangle}_{S_{\rho(x)}}|}^2 \dx x.
\end{eqnarray}
We start by dealing with $I_1$. We note that
\begin{equation}
I_1 ~\leqs~ \frac 1 2 \sum_{k\geqs 1} 2^{2k} \int_{Y^{k}} \left({|v-{\langle v \rangle}_{Y^{k,1}}|}^2 +{|v-{\langle v \rangle}_{Y^{k,2}}|}^2\right) \dx x.
\end{equation}
For every $k\geqs 1$, we can apply Lemma \ref{lem1} to the function $v\circ g^{k-1}$. Since $\gamma^k=g^{k-1}(\gamma)$, we obtain
\begin{eqnarray}
I_1 &\leqs & c\left( \sum_{k\geqs 1} 2^k \int_{\gamma_k} {|v-{\langle v \rangle}_{\gamma_k}|}^2\dx \mu
+ \sum_{k\geqs 1} \sum_{m\geqs 1} \kappa^{m} \sum_{\tau \in \Sigma^1\cap \cA_m} \int_{g^{k-1}(f_\tau(Y^0))} {|\nabla v|}^2\dx x \right) \notag \\
&=& c\left( \sum_{k\geqs 1} 2^k \int_{\gamma_k} {|v-{\langle v \rangle}_{\gamma_k}|}^2\dx \mu +
\sum_{k\geqs 1} \sum_{p\geqs k} \kappa^{p-k+1} \sum_{\tau \in \Sigma^k\cap \cA_p} \int_{f_\tau(Y^0)} {|\nabla v|}^2\dx x \right), \label{ineq-I1}
\end{eqnarray}
for some constant $c>0$ independent of $v$, since $\{g^{k-1}\circ f_\tau,~ \tau \in \Sigma^1\cap \cA_m\} = \{f_\tau,~ \tau \in \Sigma^k\cap \cA_{m+k-1}\}$. 
\\
Let us now deal with $I_2$.
For every $R>0$ and $i=1,2$, call $S_R^i = S_R \cap \cC_i$ and ${\langle v \rangle}_{S_R^i}$ the mean value of $v$ on the set $S_R^i$. We note that 
\begin{equation}\label{maj-I2}
	I_2 ~\leqs~ \frac 1 2 \sum_{i=1,2} \; \sum_{k\geqs 1} 2^{2k} \left(
	\int_{Y^{k}} {|{\langle v \rangle}_{Y^{k,i}} - {\langle v \rangle}_{S_{\rho(x)}^i}|}^2 \right).
\end{equation}
 Take $i\in \{1,2\}$ and $x_0\in f_i(Y^0)$, and, for every integer $k\geqs 1$,  $\rho_k = \rho(x_0)/2^{k-1}$.
\\[1mm]
We observe that $\sum_k 2^{2k} \int_{Y^{k}} {|{\langle v \rangle}_{Y^{k,i}} - {\langle v \rangle}_{S_{\rho(x)}^i}|}^2 \leqs 2(J_1+J_2)$, where
\begin{eqnarray}
	J_1 &=& \sum_{k\geqs 1} 2^{2k+1} |Y^{k,i}|  {({\langle v \rangle}_{Y^{k,i}} - {\langle v \rangle}_{S_{\rho_k}^i})}^2, \label{J1}\\
	J_2 &=& \sum_{k\geqs 1} 2^{2k} \int_{Y^{k}} {|{\langle v \rangle}_{S_{\rho_k}^i} - {\langle v \rangle}_{S_{\rho(x)}^i}|}^2 \dx x.
\end{eqnarray}
Let us first examine $J_1$. The following Poincar{\'e} inequality holds in $Y^{1,i}=f_i(Y^0)$: for every $v\in H^1(Y^{1,i})$,
\begin{equation}\label{poincare}
	\int_{Y^{1,i}} {|v(x)-{\langle v \rangle}_{S_{\rho_i}}|}^2 \dx x ~\leqs~ M \int_{Y^{1,i}} {|\nabla v|}^2 \dx x,
\end{equation}
where the constant $M>0$ is independent of $v$.
\\[1mm]
Observe that for every integer $k\geqs 1$, $Y^{k,i} = g^{k-1}(Y^{1,i})$, and $S^i_{\rho_k} = g^{k-1}(S^i_{\rho_1})$. 
Then
\begin{eqnarray*}
{|{\langle v \rangle}_{Y^{k,i}} - {\langle v \rangle}_{S_{\rho_k}^i}|}^2
&=& \frac{1}{{|Y^{k,i}|}^2}  {\left|\int_{Y^{k,i}} (v(x)-{\langle v \rangle}_{S_{\rho_k}^i})\dx x \right|}^2\\
&\leqs& \frac 1 {|Y^{k,i}|} \int_{Y^{k,i}} {|v(x)-{\langle v \rangle}_{S_{\rho_k}^i}|}^2 \dx x\\
&\leqs& \frac{1}{2^{2(k-1)}|Y^{k,i}|} \int_{Y^{1,i}} {|v\circ g^{k-1} (x)-{\langle v\circ g^{k-1} \rangle}_{S_{\rho_1}^i}|}^2 \dx x\\
&\leqs& \frac{M}{2^{2(k-1)}|Y^{k,i}|} \int_{Y^{1,i}} {|\nabla(v\circ g^{k-1})|}^2\dx x\\
&=& \frac{M}{2^{2(k-1)}|Y^{k,i}|} \int_{Y^{k,i}} {|\nabla v|}^2\dx x,
\end{eqnarray*}
where we used \eqref{poincare}. From this and \eqref{J1},  we deduce  that 
\begin{equation}\label{ineq-J1}
J_1~\leqs ~ 8M \sum_{k\geqs 1} \int_{Y^{k,i}} {|\nabla v |}^2.
\end{equation}
To deal with $J_2$, we use polar coordinates $(\rho,\ph)$ centered at $A$. Introduce the positive constants $R_0,R_1$ such that for all  $(\rho,\ph)\in Y^{1,i}$, $R_0\leqs \rho\leqs R_1$. Observe that if $x\in Y^{k}$,
\begin{eqnarray*}
	{|{\langle v \rangle}_{S_{\rho(x)}^i} -{\langle v \rangle}_{S_{\rho_k}^i} |}^2 
	&=& {\left|\frac{1}{\ph_1-\ph_0} \int_{\ph_0}^{\ph_1} (v(\rho(x),\ph)-v(\rho_k,\ph)) \dx\ph\right|}^2\\
	&=& {\left| \frac{1}{\ph_1-\ph_0} \int_{\ph_0}^{\ph_1} \int_{\rho_k}^{\rho(x)} \frac{\partial v}{\partial \rho}(s,\ph)\dx s \dx\ph \right|}^2\\
	&\leqs& \frac{1}{(\ph_1-\ph_0)^2} \left|\int_{\ph_0}^{\ph_1} \int_{\rho_k}^{\rho(x)} {\left|\frac{\partial v}{\partial \rho}(s,\ph)\right|}^2 s \dx s \dx\ph\right| \times 
	\left|\int_{\ph_0}^{\ph_1} \int_{\rho_k}^{\rho(x)} \frac{\dx s}s \dx\ph\right|\\
	&\leqs& \frac{\log \frac {R_1}{R_0}}{\ph_1-\ph_0} \int_{\ph_0}^{\ph_1} \int_{R_0/2^{k-1}}^{R_1/2^{k-1}} {\left|\frac{\partial v}{\partial \rho}(s,\ph)\right|}^2 s \dx s \dx \ph\\
	&=& \frac{\log \frac {R_1}{R_0}}{\ph_1-\ph_0} \int_{C(k)} {|\nabla v(y)|}^2 \dx y,
\end{eqnarray*}
where $C(k) = \{(\rho,\ph),~ \frac{R_0}{2^{k-1}} < \rho < \frac{R_1}{2^{k-1}},~ \ph_1<|\ph|<\ph_0\}$.
%
%
Hence,
$$J_2 ~\leqs~ \frac{\log \frac {R_1}{R_0}}{\ph_1-\ph_0} \sum_{k\geqs 1} 2^{2k} \int_{Y^{k}}  \int_{C(k)} {|\nabla v(y)|}^2 \dx y \dx x,$$
and there is a constant $c_3>0$ independent of $v$ such that
$$J_2 ~\leqs~ c_3 \sum_{k\geqs 1} \int_{C(k)} {|\nabla v(y)|}^2 \dx y.$$
Note that every point $(\rho,\ph)$ in $C(k)$ lies in at most $\log_2 \frac {R_1}{R_0}$ sets $C(l)$, $l\geqs 1$. Therefore,
\begin{equation}\label{ineq-J2}
J_2 ~\leqs~ c_3 \log_2 \frac {R_1}{R_0}  \int_{C} {|\nabla v(y)|}^2\dx y ~\leqs ~ c_3 \log_2 \frac {R_1}{R_0} \sum_{k\geqs 1} \int_{Y^k} {|\nabla v|}^2\dx x.
\end{equation}
By the inequalities \eqref{ineq-J1} and \eqref{ineq-J2}, $I_2 \leqs c_4 \sum_{k\geqs 1} \int_{Y^k} {|\nabla v|}^2\dx x$ for some constant $c_4>0$ independent of $v$. Hence, 
\begin{equation}\label{ineq-I2}
I_2 ~\leqs ~ \frac{c_4}\kappa  \sum_{k\geqs 1} \sum_{p\geqs k} \kappa^{p-k+1} \sum_{\tau \in \Sigma^k\cap \cA_p} \int_{f_\tau(Y^0)} {|\nabla v|}^2\dx x.
\end{equation}
Indeed, observe that the terms in the sum of \eqref{ineq-I2} for which $p=k$ are exactly $\kappa \int_{Y^k}{|\nabla v|}^2\dx x$.
\\[1mm]
Therefore, \eqref{ineq-I1} and \eqref{ineq-I2} yield, which \eqref{eq-lem2}, which achieves the proof.
\end{proof}

\begin{proof}[Proof of Theorem \ref{ext-thm}]
In the proof we will write $\lesssim$ when there may arise in the inequality a constant that does not depend on the function $v\in H^1(\Omegai)$ we consider.
\\
By Lemma \ref{lem2} and Theorem \ref{auscher-badr}, for every $v\in H^1(\Omegai)$ such that $v_{|\Gamma}\in H^{1/2}(\Gamma)$, there exists $\Lambda v\in H^1(\R^2)$ such that $(\Lambda v)_{|C}=v$ and
\begin{equation*}
{\Vert \Lambda v \Vert}_{H^1(\R^2)}^2 ~\lesssim ~ {\Vert v \Vert}_{H^1(C)}^2 + {\left\Vert \frac{v_a}\rho\right\Vert}_{L^2(C)}^2.
\end{equation*}
Define $\widehat {C} = \text{Int}(\overline C \cup \overline Y^0)$ (see Figure \ref{cone}). Introducing a cut-off function with support in the main hole $T$ and using the operator $\Lambda$ and Remark \ref{rem-AB}, we can construct a linear extension operator $\cF_0$ from $H^1(\widehat C)$ to $H^1(\widehat C \cup \overline T)$ such that $\cF 1 = 1$ and for all $v\in H^1(\widehat C)$
\begin{equation}\label{eq-th9}
\int_T {|\nabla (\cF_0 v)|}^2 \dx x  
~\lesssim ~\int_{\widehat C} {|\nabla v|}^2\dx x + \int_C {\bigg| \frac{v_a}{\rho}\bigg|}^2\dx x,
\end{equation}
which also implies
 \begin{equation}\label{a-b}
{\Vert \cF_0 v \Vert}_{H^1(\widehat C\cup \overline T)}^2 ~\lesssim ~ {\Vert v \Vert}^2_{H^1(\widehat C)} + {\left\Vert \frac{v_a}\rho\right\Vert}_{L^2(C)}^2.
\end{equation}
We will now define an extension $\tilde v\in H^1(\widetilde \Omega_\text{int})$ of a function $v$ as in Theorem \ref{ext-thm}, where $\widetilde \Omega_\text{int}$ is the convex hull of the domain $\Omegai$.
Recall that $T$ is the main hole and $\{f_\sigma(T)$, $\sigma\in \cA\}$ is the collection of the holes of the domain $\Omega_\text{int}$ (see Figure \ref{cone}).
Introduce the function $\tilde v$ defined in $\widetilde \Omega_\text{int}$ by
\begin{equation*}
\left\{ \begin{array}{ll}
	\tilde v := v &\text{in } \Omegai,\\
	\tilde v := \cF_0(v\circ f_\sigma)\circ {f_\sigma}^{-1} & \text{in } f_\sigma(T),~ \sigma \in \cA.
\end{array}\right.
\end{equation*} 
By Lemma \ref{lem2} and \eqref{eq-th9}, we get the estimate
\begin{equation}\label{estimation-T}
\int_T {|\nabla (\cF_0 v)|}^2 \dx x
~\lesssim ~
\sum_{k\geqs 1} 2^k \int_{\gamma_k} {|v-{\langle v \rangle}_{\gamma_k}|}^2\dx \mu
+ \sum_{k\geqs 0} \sum_{p\geqs k} \kappa^{p-k+1} \sum_{\tau \in \Sigma^k\cap \cA_p} \int_{f_\tau(Y^0)} {|\nabla v|}^2\dx x. 
\end{equation}
Indeed, $\kappa \int_{Y^0} {|\nabla v|}^2 \dx x$ (\textit{resp.} $\kappa \int_{C} {|\nabla v|}^2\dx x$) is bounded from above by the terms for which $k=p=0$ (\textit{resp.} $k\geqs 1$, $p=k$) in the second sum in \eqref{estimation-T}.
\\
Observe that for every integer $k\geqs 1$, $\gamma^k \subset \widetilde \Gamma^{\sigma^k}$ where $\sigma^k = (1,2,\ldots,2)\in \cA_{k-2}$, (recall that the sets $\widetilde \Gamma^{\sigma}$ have been introduced in Remark \ref{norme-Lip}).
Therefore,
\begin{equation}\label{ineq-lip}
\int_{\gamma_k} {|v-{\langle v \rangle}_{\gamma_k}|}^2 \dx \mu ~\lesssim~ \int_{\widetilde \Gamma^{\sigma^k}} {|v-{\langle v \rangle}_{\widetilde \Gamma^{\sigma^{k}}}|}^2\dx x,
\end{equation}
for $i=1,2$, where the constant in the inequality does not depend on $k$.
Take $\sigma\in \cA_n$, one has
\begin{eqnarray*}
\int_{f_\sigma(T)} {|\nabla \tilde v|}^2 \dx x 
&=& \int_{T} {|\nabla(\cF_0(v\circ f_\sigma))|}^2\dx x\\
&\lesssim & 
\sum_{k\geqs 1} 2^{k+n} \int_{f_\sigma(\widetilde \Gamma^{\sigma^k})} {|v-{\langle v \rangle}_{f_\sigma(\widetilde \Gamma^{\sigma^k})}|}^2\dx \mu \notag \\
&& ~~ + \sum_{k\geqs 0} \sum_{p\geqs k} \kappa^{p-k+1} \sum_{\tau \in \Sigma^k\cap \cA_p} \int_{f_{\sigma \tau}(Y^0)} {|\nabla v|}^2\dx x , \label{estimation-Tsigma}
\end{eqnarray*}
where we applied \eqref{estimation-T} to the function $v\circ f_\sigma$, and we used \eqref{ineq-lip}. The constant in the inequality does not depend on $n$. The notation $\sigma \tau$ for $\tau \in \cA_k$ stands for $(\sigma(1),\ldots,\sigma(n),\tau(1),\ldots,\tau(k))\in \cA_{n+k}$.\\
We can write 
\begin{eqnarray*}
\int_{\tildeOmegai} {|\nabla \tilde v|}^2 \dx x &=& \int_{\Omegai} {|\nabla \tilde v|}^2 \dx x + \sum_{\sigma\in \cA} \int_{f_\sigma(T)} {|\nabla \tilde v|}^2 \dx x\\
& \lesssim& \int_{\Omegai} {|\nabla \tilde v|}^2 \dx x + S_1+S_2,
\end{eqnarray*}
where
\begin{eqnarray*}
S_1 &=& \sum_{n\geqs 0} \sum_{\sigma\in \cA_n} \sum_{k\geqs 1} 2^{k+n} \int_{f_\sigma(\widetilde \Gamma^{\sigma^k})} {|v-{\langle v \rangle}_{f_\sigma(\widetilde \Gamma^{\sigma^k})}|}^2\dx \mu,\\
S_2 &=& \sum_{\sigma\in \cA} \sum_{k\geqs 0} \sum_{p\geqs k} \kappa^{p-k+1} \sum_{\tau \in \Sigma^k\cap \cA_p} \int_{f_{\sigma \tau}(Y^0)} {|\nabla v|}^2\dx x.
\end{eqnarray*}
We first deal with $S_1$. Take $\sigma,\tau\in \cA$ and $k,l\geqs 1$. Note that if $\sigma \sigma^k = \tau \sigma^l$, then $k=l$ and $\sigma=\tau$. Therefore,
\begin{equation}\label{S1}
S_1 ~\leqs~ \sum_{k\geqs 0} 2^k \sum_{\sigma\in \cA_k} \int_{\widetilde \Gamma^\sigma} {|v-{\langle v \rangle}_{\widetilde \Gamma^\sigma}|}^2\dx \mu ~\lesssim ~ {\Vert v_{|\Gamma} \Vert}_{H^{1/2}(\Gamma)}^2
\end{equation}
by \eqref{H-Lip}.
\\[1mm]
We are left with dealing with $S_2$. Assume that $\eta\in \cA_N$ and $\eta=\sigma \tau$ with $\tau\in \Sigma^k\cap \cA_p$, then $p\leqs N$. Since the sets $\Sigma^k$, $k\geqs 1$, are pairwise disjoint, this means that the term $\int_{f_\eta(Y^0)} {|\nabla v |}^2\dx x$ appears at most $N$ times in the sum $S_2$. 
\\
Moreover, we observe that $p-k+1 \in [1,N+1]$. It can be seen that there is at most one quadruplet $(\sigma',\tau',l,q)$ with $\sigma'\in \cA$, $\tau'\in \Sigma^l\cap \cA_{q}$, $l\geqs 1$ and $q\geqs l$ distinct from $(\sigma,\tau,k,p)$ such that $\eta=\sigma'\tau'$ and $p-k+1=q-l+1$. As a consequence,
\begin{eqnarray}
S_2 &\leqs& 2 \sum_{N\geqs 0} \sum_{\eta\in \cA_N} \sum_{m=1}^{N+1} \kappa^m \int_{f_\eta(Y^0)} {|\nabla v|}^2\dx x \notag\\
&\leqs & \frac 2{1-\kappa}  \sum_{\eta\in \cA} \int_{f_\eta(Y^0)} {|\nabla v|}^2\dx x
	~=~ \frac 2{1-\kappa} \int_{\Omegai} {|\nabla v|}^2\dx x. \label{S2}
\end{eqnarray}
Therefore, \eqref{S1} and \eqref{S2} give
$${\Vert \tilde v \Vert}_{H^1(\tildeOmegai)}^2 ~\lesssim~  {\Vert v \Vert}_{H^1(\Omegai)}^2 + {\Vert v_{|\Gamma} \Vert}_{H^{1/2}(\Gamma)}^2.$$
Since $\widetilde\Omega_\text{int}$ is a polygonal domain, we can further extend $\tilde v$ into a function $\cF v$ in $H^1(D)$, where $\cF$ is a linear operator satisfying \eqref{continuity}.
\end{proof}

\paragraph{Proof of Theorem~\ref{M-cv-theta=0}}
We will prove separately points \ref{point1} and \ref{point2} in Definition \ref{Mosco-convergence}.
\\[1mm]
\textit{Proof of point \ref{point1}}~ Suppose that ${(u_n)}$ weakly converges  to $u$ in $L^2(D)$. 
Without loss of generality, one can suppose $\limi a_n(u_n,u_n)$ is finite. We may further assume that there exists  a subsequence, still called $(u_n)$, such that 
 $a_n(u_n,u_n)$ converges to some real number as $n\to \infty$; as a consequence, 
there exists a constant  $c$ independent of $n$ such that
\begin{equation}\label{u-bound}
a_n(u_n,u_n) ~\leqs~ c.
\end{equation}
In particular, for all $n$, $u_n\in V^n$, which implies that $u_n\in V$.
Then, \eqref{u-bound} implies that $({u_n}_{|\Omegai})$ is bounded in $H^1(\Omegai)$ and that  $\sqrt {\nu_n}  \nabla {u_n}_{|\Omegae} $ is bounded in $L^2 (\Omegae)$.
Therefore, there exists a subsequence that we still denote $(u_n)$  such that 
\begin{itemize}
\item $({u_n}_{|\Omegai})$  converges to $u_{|\Omegai}$ weakly in $H^1(\Omegai)$, and strongly in $L^2(\Omegai)$
\item $\sqrt {\nu_n} \; \nabla {u_n}_{|\Omegae} $ converges  weakly in $L^2(\Omegae)$ to $\nabla {u}_{|\Omegae}$ (recall that $\nu_n$ converges to $1$ almost everywhere, so the weak limit of  $\sqrt {\nu_n} \; \nabla {u_n}_{|\Omegae} $  must be $\nabla {u}_{|\Omegae}$.)
\end{itemize}
Thus
\begin{displaymath}
  \begin{split}
& \limi \left(\int_{\Omegai^n}   \left( {|\nabla u_n|}^2  +\beta u_n^2 \right ) \dx x + \int_{\Omegae^n}   \left ( \nu_n {|\nabla u_n|}^2 + \beta u_n^2\right) \dx x \right) \\
\geqs ~  & \int_{\Omegai} \left({|\nabla u|}^2+\beta u^2\right) \dx x + \int_{\Omegae} \left({|\nabla u|}^2+\beta u ^2\right) \dx x.    
  \end{split}
\end{displaymath}
Moreover, exactly as in the proof of Theorem \ref{M-cv}, we see that 
 $\frac 1{|\Gamma^n|}\int_{\Gamma^n} u_n^2\dx x \to \int_\Gamma u^2\dx \mu$ as $n\to \infty$. We have proved point \ref{point1}
\\[2mm]
\textit{Proof of point \ref{point2}}~ Take $u\in L^2(D)$. By \eqref{def-a}, we may assume that $u\in V$.
 We must construct $(u_n)$ converging strongly in $L^2(D)$ such that \eqref{M-conv2} holds.
\\  Recall that $H$ is the height of the triangle $T$ and that $\Gamma^0$ is a segment of the line $\{x_2=0\}$, 
 $\Gamma$ is a segment of the line $\{x_2=2H\}$. We then introduce a sequence of smooth cut-off functions $\chi_n (x_2)$ such that 
$\chi_n(x_2)=0$ if $x_2\le  2H - \frac {3H} {2^{n+1}}$,
$\chi_n(x_2)=1$ if $x_2\in [2H- \frac H {2^{n}} , 2H ]$ and that $ 2^{-n} \|\chi_n' \|_{L^\infty}   + \|\chi_n \|_{L^\infty}$ is bounded
by a constant independent of  $n$.
\\[1mm]
We now define the  functions $u_n$ by
\begin{equation}
\label{u_n}
u_n(x)=\left\{
  \begin{array}[c]{ll}
\ui(x)\quad  & \forall x\in \Omegai, \\
\ue(x)\quad  & \forall x\in U, \\
\cE( \ui)(x) \quad  &\ds \forall x \in   \bigcup_{m\ge n} \bigcup_{\sigma\in \cA_m  } f_\sigma(T),\\
 \chi_n(x_2)  \cE( \ui)(x)   +  (1-\chi_n(x_2)) \ue (x)   \quad  &\ds \forall x\in  \bigcup_{m=0}^{n-1} \bigcup_{\sigma\in \cA_m  } f_\sigma(T),
  \end{array}
\right.
\end{equation}
where $\cE$ is the extension operator introduced in Theorem \ref{ext-thm}, $\ui = u_{|\Omegai}$ and $\ue = u_{|\Omegae}$.
It is easy to check that   $u_n$ belongs to the space $V^n$ and that the sequence $(u_n)$ strongly converges  to $u$ in $L^2(D)$.
We claim that 
\begin{eqnarray}
  \label{eq:9}
\sum_{m=0}^{n-1} \sum_{\sigma\in \cA_m  } \int_{f_\sigma(T)} \nu_n(x) \left| \nabla  \left ( \chi_n    ( \cE( \ui)  - \ue ) \right)\right|^2\dx x \longrightarrow 0, \\
\label{eq:10}
 \sum_{m\ge n} \sum_{\sigma\in \cA_m  } \int_{f_\sigma(T)} \left| \nabla  \left ( \cE( \ui)  - \ue  \right)\right|^2 \dx x\longrightarrow 0.
\end{eqnarray}
Indeed, we readily obtain (\ref{eq:10}) from the fact  that the measure of $ \bigcup_{m\ge n} \bigcup_{\sigma\in \cA_m  } f_\sigma(T)$ tends to zero and the fact that
 $\| \cE(\ui)\|_{H^1 (D)}$ and $\| \ue\|_{H^1 (D)}$ are finite. We obtain  (\ref{eq:9}) because $\|  \sqrt {\nu_n} \nabla \chi_n\|_{L^\infty} $ is bounded uniformly with respect to $n$ and 
$\chi_n$ is  supported in a region with vanishing measure, and because $\| \cE(\ui)\|_{H^1 (D)}$ and $\| \ue\|_{H^1 (\Omegae)}$ are finite.
\\
Therefore, 
\begin{displaymath}
  \begin{split}
&     \lim_{n\to \infty} \left( \int_{\Omegai^n} (   |\nabla \ui^n |^2  +\beta   |\ui^n |^2)  \dx x + 
\int_{\Omegae^n}  (\nu_n  |\nabla \ue^n|^2+ \beta |\ue^n |^2)\dx x \right)
\\
=&  \int_{\Omegai} ( |\nabla \ui|^2  +\beta   \ui ^2)  \dx x + \int_{\Omegae^n}  (  |\nabla \ue|^2 + \beta \ue^2  )\dx x,
  \end{split}
\end{displaymath}
where $\ui^n := {u_n}_{|\Omegai^n}$ and $\ue^n := {u_n}_{|\Omegae^n}$. Finally, from Proposition \ref{convergence-result}, $\frac{1}{|\Gamma^n|} \int_{\Gamma^n} u^2\dx x \longrightarrow \int_{\Gamma} u^2\dx \mu $
as $n\to \infty$. Collecting all the above results, we obtain that  $\lim a_n(u_n,u_n) = a(u,u)$ as $n\to \infty$, thus  point {\it ii}.

\nocite{MR3108826}

{\small
\bibliographystyle{plain}
\bibliography{biblio}}

\def\cprime{$'$} \def\cprime{$'$} \def\cprime{$'$} \def\cprime{$'$}
  \def\cprime{$'$} \def\cprime{$'$} \def\cprime{$'$} \def\cprime{$'$}
\begin{thebibliography}{10}

\bibitem{ADT}
Y.~Achdou, T.~Deheuvels, and N.~Tchou.
\newblock \textit{JLip} versus {S}obolev spaces on a class of self-similar
  fractal foliages.
\newblock {\em Journal de Math{\'e}matiques Pures et Appliqu{\'e}es},
  97:142--172, 2012.

\bibitem{comparison}
Y.~Achdou, T.~Deheuvels, and N.~Tchou.
\newblock Comparison of different definitions of traces for a class of ramified
  domains with self-similar fractal boundaries.
\newblock {\em Potential Anal.}, 40(4):345--362, 2014.

\bibitem{ach-tch-2007-1}
Y.~Achdou and N.~Tchou.
\newblock Neumann conditions on fractal boundaries.
\newblock {\em Asymptotic Analysis}, 53(1-2):61--82, 2007.

\bibitem{YANT2008}
Y.~Achdou and N.~Tchou.
\newblock Trace results on domains with self-similar fractal boundaries.
\newblock {\em Journal de Math{\'e}matiques Pures et Appliqu{\'e}es},
  89:596--623, 2008.

\bibitem{YANT2010}
Y.~Achdou and N.~Tchou.
\newblock Trace theorems for a class of ramified domains with self-similar
  fractal boundaries.
\newblock {\em SIAM j. of Math. Anal.}, 42(4):828--860 (electronic), 2010.

\bibitem{MR2424078}
R.A. Adams and J.J.F. Fournier.
\newblock {\em Sobolev spaces}, volume 140 of {\em Pure and Applied Mathematics
  (Amsterdam)}.
\newblock Elsevier/Academic Press, Amsterdam, second edition, 2003.

\bibitem{MR2790816}
P.~Auscher and N.~Badr.
\newblock Sobolev spaces on multiple cones.
\newblock {\em Ann. Fac. Sci. Toulouse Math. (6)}, 19(3-4):707--733, 2010.

\bibitem{MR1800198}
G.~Berger.
\newblock Eigenvalue distribution of elliptic operators of second order with
  {N}eumann boundary conditions in a snowflake domain.
\newblock {\em Math. Nachr.}, 220:11--32, 2000.

\bibitem{MR0143037}
A.-P. Calder{\'o}n.
\newblock Lebesgue spaces of differentiable functions and distributions.
\newblock In {\em Proc. {S}ympos. {P}ure {M}ath., {V}ol. {IV}}, pages 33--49.
  American Mathematical Society, Providence, R.I., 1961.

\bibitem{MR2557700}
R.~Capitanelli.
\newblock Asymptotics for mixed {D}irichlet-{R}obin problems in irregular
  domains.
\newblock {\em J. Math. Anal. Appl.}, 362(2):450--459, 2010.

\bibitem{MR2645992}
R.~Capitanelli.
\newblock Robin boundary condition on scale irregular fractals.
\newblock {\em Commun. Pure Appl. Anal.}, 9(5):1221--1234, 2010.

\bibitem{MR3100076}
R.~Capitanelli, M.R. Lancia, and M.A. Vivaldi.
\newblock Insulating layers of fractal type.
\newblock {\em Differential Integral Equations}, 26(9-10):1055--1076, 2013.

\bibitem{MR3100114}
R.~Capitanelli and M.~A. Vivaldi.
\newblock On the {L}aplacean transfer across fractal mixtures.
\newblock {\em Asymptot. Anal.}, 83(1-2):1--33, 2013.

\bibitem{MR969367}
R.~Dautray and J.-L. Lions.
\newblock {\em Mathematical analysis and numerical methods for science and
  technology. {V}ol. 2}.
\newblock Springer-Verlag, Berlin, 1988.
\newblock Functional and variational methods, With the collaboration of Michel
  Artola, Marc Authier, Philippe B{\'e}nilan, Michel Cessenat, Jean Michel
  Combes, H{\'e}l{\`e}ne Lanchon, Bertrand Mercier, Claude Wild and Claude
  Zuily, Translated from the French by Ian N. Sneddon.

\bibitem{deheuvelsphd}
T.~Deheuvels.
\newblock Contributions \`a l'\'etude d'espaces de fonctions et d'{EDP} pour
  une classe de domaines à fronti\`ere fractale auto-similaire.
\newblock PhD dissertation.

\bibitem{SEP}
T.~Deheuvels.
\newblock Sobolev extension property for tree-shaped domains with
  self-contacting fractal boundary.
\newblock To appear in \textit{Annali della Scuola Normale Superiore di Pisa}.

\bibitem{MR1449135}
K.J. Falconer.
\newblock {\em Techniques in fractal geometry}.
\newblock John Wiley \& Sons Ltd., Chichester, 1997.

\bibitem{MR0435361}
H.~Federer and W.P. Ziemer.
\newblock The {L}ebesgue set of a function whose distribution derivatives are
  {$p$}-th power summable.
\newblock {\em Indiana Univ. Math. J.}, 22:139--158, 1972/73.

\bibitem{PhysRevLett.84.5776}
M.~Filoche and B.~Sapoval.
\newblock Transfer across random versus deterministic fractal interfaces.
\newblock {\em Phys. Rev. Lett.}, 84:5776--5779, Jun 2000.

\bibitem{MR0400916}
P.H. Hung and E.~S{\'a}nchez-Palencia.
\newblock Ph\'enom\`enes de transmission \`a travers des couches minces de
  conductivit\'e \'elev\'ee.
\newblock {\em J. Math. Anal. Appl.}, 47:284--309, 1974.

\bibitem{MR625600}
J.E. Hutchinson.
\newblock Fractals and self-similarity.
\newblock {\em Indiana Univ. Math. J.}, 30(5):713--747, 1981.

\bibitem{MR631089}
P.W. Jones.
\newblock Quasiconformal mappings and extendability of functions in {S}obolev
  spaces.
\newblock {\em Acta Math.}, 147(1-2):71--88, 1981.

\bibitem{MR2032227}
A.~Jonsson.
\newblock Haar wavelets of higher order on fractals and regularity of
  functions.
\newblock {\em J. Math. Anal. Appl.}, 290(1):86--104, 2004.

\bibitem{MR820626}
A.~Jonsson and H.~Wallin.
\newblock Function spaces on subsets of {${\bf R}\sp n$}.
\newblock {\em Math. Rep.}, 2(1):xiv+221, 1984.

\bibitem{MR1840042}
J.~Kigami.
\newblock {\em Analysis on fractals}, volume 143 of {\em Cambridge Tracts in
  Mathematics}.
\newblock Cambridge University Press, Cambridge, 2001.

\bibitem{MR1404091}
J.~Kinnunen and O.~Martio.
\newblock The {S}obolev capacity on metric spaces.
\newblock {\em Ann. Acad. Sci. Fenn. Math.}, 21(2):367--382, 1996.

\bibitem{MR1658090}
P.~Koskela.
\newblock Extensions and imbeddings.
\newblock {\em J. Funct. Anal.}, 159(2):369--383, 1998.

\bibitem{MR1916407}
M.R. Lancia.
\newblock A transmission problem with a fractal interface.
\newblock {\em Z. Anal. Anwendungen}, 21(1):113--133, 2002.

\bibitem{MR2679587}
M.R. Lancia and P.~Vernole.
\newblock Irregular heat flow problems.
\newblock {\em SIAM J. Math. Anal.}, 42(4):1539--1567, 2010.

\bibitem{MR1684366}
B.B. Mandelbrot and M.~Frame.
\newblock The canopy and shortest path in a self-contacting fractal tree.
\newblock {\em Math. Intelligencer}, 21(2):18--27, 1999.

\bibitem{MR565886}
O.~Martio and J.~Sarvas.
\newblock Injectivity theorems in plane and space.
\newblock {\em Ann. Acad. Sci. Fenn. Ser. A I Math.}, 4(2):383--401, 1979.

\bibitem{MR817985}
V.G. Maz'ja.
\newblock {\em Sobolev spaces}.
\newblock Springer Series in Soviet Mathematics. Springer-Verlag, Berlin, 1985.
\newblock Translated from the Russian by T. O. Shaposhnikova.

\bibitem{MR1643072}
V.G. Maz{\cprime}ya and S.V. Poborchi.
\newblock {\em Differentiable functions on bad domains}.
\newblock World Scientific Publishing Co., Inc., River Edge, NJ, 1997.

\bibitem{MR0014397}
P.A.P. Moran.
\newblock Additive functions of intervals and {H}ausdorff measure.
\newblock {\em Proc. Cambridge Philos. Soc.}, 42:15--23, 1946.

\bibitem{MR0298508}
U.~Mosco.
\newblock Convergence of convex sets and of solutions of variational
  inequalities.
\newblock {\em Advances in Math.}, 3:510--585, 1969.

\bibitem{MR1283033}
U.~Mosco.
\newblock Composite media and asymptotic {D}irichlet forms.
\newblock {\em J. Funct. Anal.}, 123(2):368--421, 1994.

\bibitem{MR2056421}
U.~Mosco and M.A. Vivaldi.
\newblock Variational problems with fractal layers.
\newblock {\em Rend. Accad. Naz. Sci. XL Mem. Mat. Appl. (5)}, 27:237--251,
  2003.

\bibitem{MR2323380}
U.~Mosco and M.A. Vivaldi.
\newblock An example of fractal singular homogenization.
\newblock {\em Georgian Math. J.}, 14(1):169--193, 2007.

\bibitem{MR3108826}
U.~Mosco and M.A. Vivaldi.
\newblock Thin fractal fibers.
\newblock {\em Math. Methods Appl. Sci.}, 36(15):2048--2068, 2013.

\bibitem{MR0221282}
J.~Peetre.
\newblock Espaces d'interpolation et th\'eor\`eme de {S}oboleff.
\newblock {\em Ann. Inst. Fourier (Grenoble)}, 16(fasc. 1):279--317, 1966.

\bibitem{PhysRevLett.73.3314}
B.~Sapoval.
\newblock General formulation of laplacian transfer across irregular surfaces.
\newblock {\em Phys. Rev. Lett.}, 73:3314--3316, Dec 1994.

\bibitem{MR0290095}
E.M. Stein.
\newblock {\em Singular integrals and differentiability properties of
  functions}.
\newblock Princeton Mathematical Series, No. 30. Princeton University Press,
  Princeton, N.J., 1970.

\bibitem{Tartar}
L.~Tartar.
\newblock Nonlinear partial differential equations using compactness methods.
\newblock {\em Mathematics Research Center, Univ. of Wisconsin, Madison},
  Report 1584, 1975.

\bibitem{MR530508}
S.~K. Vodop{\cprime}janov, V.~M. Gol{\cprime}d{\v{s}}te{\u\i}n, and T.~G.
  Latfullin.
\newblock A criterion for the extension of functions of the class
  {$L\sb{2}\sp{1}$} from unbounded plane domains.
\newblock {\em Sibirsk. Mat. Zh.}, 20(2):416--419, 464, 1979.

\end{thebibliography}
\end{document}